\def\arraystretch{1.5}
\def\O#1{\text{\ding{\the\numexpr#1+171}}}
\definecolor{amethyst}{rgb}{0.6, 0.4, 0.8}
\newcommand{\kibitz}[2]{\ifnum\Comments=1{\textcolor{#1}{\textsf{\footnotesize #2}}}\fi}
\def\s{\bm{s}}
\def\x{\bm{x }}
\def\y{\bm{y}}
\def\z{\bm{z}}
\def\v{\bm{v}}
\def\u{\bm{u}}
\def\meps{\bm{\epsilon}}
\def\b1{\bm{1}}
\newcommand{\D}{\mathcal D}
\newcommand{\R}{\mathbb R}
\newcommand{\X}{\mathcal X}
\newcommand{\Y}{\mathcal Y}
\newcommand{\mO}{\mathcal O}
\newcommand{\U}{\mathcal U}
\newcommand{\bz}{\mathbf 0}
\DeclareMathOperator{\prox}{prox}
\DeclareMathOperator{\proj}{proj}
\DeclareMathOperator{\diam}{diam}
\DeclareMathOperator{\dist}{dist}
\DeclareMathOperator*{\argmin}{argmin}
\DeclareMathOperator*{\argmax}{argmax}
\declaretheoremstyle[parent=section]{definitionwithend}
\declaretheorem[style=definitionwithend]{corollary}
\declaretheorem[style=definitionwithend]{theorem}
\declaretheorem[style=definitionwithend]{proposition}
\declaretheorem[style=definitionwithend]{definition}
\declaretheorem[style=definitionwithend]{assumption}
\declaretheorem[style=definitionwithend]{remark}
\declaretheorem[style=definitionwithend]{lemma}
\declaretheorem[style=definitionwithend]{condition}
\title{ 
 Doubly Smoothed Optimistic Gradients: A Universal Approach for Smooth Minimax Problems 
}
\date{\today}
\author{%
    Taoli Zheng\thanks{Department of Systems Engineering and Engineering Management, The Chinese University of Hong Kong, Shatin, NT, Hong Kong.  \texttt{tlzheng@se.cuhk.edu.hk}} \and
    Anthony Man-Cho So\thanks{Department of Systems Engineering and Engineering Management, The Chinese University of Hong Kong, Shatin, NT, Hong Kong. \texttt{manchoso@se.cuhk.edu.hk}}\and
    Jiajin Li\thanks{Sauder School of Business, University of British Columbia, Vancouver, BC, Canada. \texttt{jiajin.li@sauder.ubc.ca}} 
}
\begin{document}
\maketitle
 
\begin{abstract}
Smooth minimax optimization problems play a central role in a wide range of applications, including machine learning, game theory, and operations research. 
However, existing algorithmic frameworks  vary significantly depending on the problem structure --- whether it is convex-concave, nonconvex-concave, convex-nonconcave, or even nonconvex-nonconcave with additional regularity conditions. In particular, this diversity complicates the tuning of step-sizes since even verifying convexity (or concavity) assumptions is challenging and problem-dependent.
We introduce a universal and single-loop  algorithm, Doubly Smoothed Optimistic Gradient Descent Ascent (DS-OGDA), that applies to a broad class of smooth minimax problems. Specifically, this class includes  convex-concave, nonconvex-concave, convex-nonconcave, and nonconvex-nonconcave minimax optimization problems satisfying a one-sided Kurdyka-\L{}ojasiewicz (K\L{}) property.  
DS-OGDA works with a universal single set of parameters for all problems in this class, eliminating the need for prior structural knowledge to determine step-sizes. 
Moreover, when a particular problem structure in our class is specified, DS-OGDA achieves optimal or best-known performance guarantees. 
Overall, our results provide a comprehensive and versatile framework for smooth minimax optimization, bridging the gap between convex and nonconvex problem structures and simplifying the choice of algorithmic strategies across diverse applications.
\end{abstract}
 
\begin{keywords}
Smooth Minimax Optimization, First-Order Methods   
\end{keywords}

\section{Introduction}\label{sec:intro}
In this paper, we study the smooth minimax problem:
\begin{equation}\label{eq:prob}
\min_{\x\in \X}\max_{\y\in \Y} f(\x,\y), \tag{P}
\end{equation}
where $f:\R^n\times \R^d\rightarrow\R$ is continuously differentiable, and $\X\subseteq \R^n, \Y \subseteq \R^d$ are nonempty compact convex sets.  
Problem \eqref{eq:prob} has gained increasing attention due to its extensive applications in machine learning and operations research, including generative adversarial networks \citep{arjovsky2017wasserstein,goodfellow2020generative}, multi-agent reinforcement learning \citep{omidshafiei2017deep,dai2018sbeed}, and (distributionally) robust optimization \citep{ben2009robust,bertsimas2011theory,kuhn2019wasserstein,rahimian2019distributionally,blanchet2024distributionally}.
 
Within this growing body of work, researchers have design tailored  algorithmic frameworks to solve problems under various structural assumptions. These assumptions broadly fall into two categories: (i) variational inequality (VI)-based conditions~\citep{diakonikolas2021efficient,cai2022accelerated,pethick2022escaping,bohm2023solving,cai2024accelerated}, and (ii) one-sided dominance conditions~\citep{yang2022faster,li2022nonsmooth,zheng2023universal}. These dominance conditions include convexity (resp. concavity) and Kurdyka-\L{}ojasiewicz (K\L{}) properties imposed on the primal (resp. dual) side of the problem.
These two categories of assumptions lead to fundamentally different principles for the design of algorithmic frameworks. 
One-sided dominance conditions leverage the asymmetry between primal and dual updates, while VI-based frameworks treat the primal and dual variables symmetrically. For instance, the inherent symmetry of convex-concave problems are often handled using VI-based methods, whereas nonconvex-concave and convex-nonconcave problems rely on carefully balancing the primal and dual steps.
Although these algorithmic frameworks ensure convergence under their respective structural assumptions, a major practical challenge remains: Verifying whether a given problem satisfies a particular structural assumption is often difficult, if not infeasible. Without this knowledge, selecting the appropriate algorithm becomes a daunting task.

To address this, we aim to develop a general-purpose algorithm for smooth minimax problems that requires minimal problem-specific tuning. To that end, we define a structured function class $\mathcal{F}$ to contain all continuously differentiable functions $f:\R^n\times \R^d \rightarrow \R$ whose gradients are Lipschitz continuous with a known constant $L>0$. 
Within $\mathcal{F}$, we focus on five canonical subclasses: convex–concave (C–C), nonconvex–concave (NC–C), convex–nonconcave (C–NC), nonconvex–KŁ (NC–KŁ), and KŁ–nonconcave (KŁ–NC). The last two classes are defined by relaxing convexity or concavity assumptions and replacing them with a one-sided KŁ property in either the $\x$ or $\y$ variable, respectively. Given such a problem instance $(f, \mathcal{X}, \mathcal{Y})$ with $f \in \mathcal{F}$, our goal is to design an algorithm that returns an approximate stationary point of the minimax problem~\eqref{eq:prob}.

To that end, we identify two desirable properties that such an algorithm should satisfy.  
(i)~\textbf{Universal applicability}: A single step-size schedule should guarantee convergence across all problem types in $f\in\mathcal{F}$, without requiring the user to verify convexity, concavity, or KŁ properties before applying the algorithm.  
(ii)~\textbf{Optimal achievability}: When structural knowledge is available—e.g., the problem is known to be C-C—the algorithm should be able to adjust step-sizes accordingly to match the optimal or best-known  iteration complexities for that subclass. This ensures the algorithm performs optimally when possible, while remaining broadly applicable when structure is uncertain. Our dual criteria—universal applicability and optimal achievability—are conceptually with the “\emph{universal method}” developed for minimization problems \citep{nesterov2015universal,yurtsever2015universal}. These prior works focus on convex and Hölder smooth (possibly nonsmooth) functions in pure minimization settings and develop adaptive algorithms that automatically adjust their step-sizes to learn the unknown Hölder exponent and smoothness constants \citep{lan2015bundle,levy2018online, kavis2019unixgrad, ghadimi2019generalized,li2023simple}. In our criteria, this corresponds to simultaneously achieving universal applicability and optimal achievability without requiring parameter tuning.  However, the minimization setting is significantly more benign. In particular, there is little algorithmic or analytical distinction between convex and nonconvex smooth minimization: For instance, accelerated gradient methods satisfy both criteria in smooth minimization, applying uniformly and achieving optimal complexity when the smoothness constant is known.  In contrast, minimax problems exhibit a fundamental algorithmic divide between convex and nonconvex structures, with no unified framework currently available. 
Our work takes a first step toward bridging this gap. We propose a first-order algorithm that applies uniformly across all subclasses in  $\mathcal{F}$, operates under a fixed step-size policy, guarantees convergence to an approximate stationary point, and achieves optimal iteration complexity when structural information is available.

Defining an algorithm that achieves both (i) and (ii) proceeds in two steps. We first start by describing an algorithmic framework that achieves (i) on all subclasses of problems. This algorithmic framework is inspired by that defined in \citet{zheng2023universal} but allows for further adjustment to attain (ii). 

The first step involves identifying two key algorithmic features that ensure universal applicability across NC-C and C-NC problems. First, the algorithm should ideally be a \textit{single-loop method}.  For NC-C problems, methods often treat the inner maximization function  $\phi(\x) = \max_{\y \in \Y} f(\x, \y)$ as a nonsmooth minimization problem in $\x$, leading to nested-loop algorithms that are well-understood in theory~\citep{thekumparampil2019efficient,nouiehed2019solving,lin2020near,jin2020local,ostrovskii2021efficient}.  However, extending this approach to C-NC problems is challenging due to the nonconcavity of the inner maximization. 
For the second feature, it is highly advantageous for the algorithm to be \textit{symmetric}—that is, to update both primal and dual variables using the same schemes and step-sizes. This rationale comes from the observation that $\min_{\x \in \X} \max_{\y \in \Y} f(\x,\y) = -\max_{\x \in \X} \min_{\y \in \Y} -f(\x,\y)$, and that the game stationary points~\citep[Definition 7.1]{li2022nonsmooth} of the min-max and max-min problems are generally the same. 
A symmetric algorithm thus enables solving a C-NC problem using the same method as for its NC-C counterpart.

The Doubly Smoothed Gradient Descent Ascent (DS-GDA) method proposed in \citet{zheng2023universal} is, to our knowledge, the only algorithm satisfying both the single-loop and symmetric features for nonconvex minimax optimization problems.  Its key contribution is the introduction of a double smoothing mechanism,  which enables symmetric updates and thereby ensures universal applicability across all problem classes considered in this work. Beyond universal applicability, a next question is whether DS-GDA also achieves optimal iteration complexity for different problem classes. 
For NC-C, C-NC and NC-NC problems satisfying a one-sided K\L{} condition, 
DS-GDA is able to achieve the best-known complexity for single-loop methods \citep{li2022nonsmooth,xu2023unified,zhang2020single}. However, for C-C problems, it only achieves a suboptimal 
iteration complexity  $\mathcal{O}(\epsilon^{-2})$  for reaching an
$\epsilon$-saddle point~\citep[Definition 4]{lin2020near}, and we prove that this complexity is already tight. 
The suboptimality arises because DS-GDA essentially reduces to a vanilla GDA update on a regularized C-C problem, which is proven to yield an inferior convergence rate.
Achieving the optimal complexity $\mathcal{O}(\epsilon^{-1})$  for C-C problems \citep{zhang2022lower} requires an extrapolation step, as used in Extragradient (EG) \citep{korpelevich1976extragradient,mokhtari2020convergence} or Optimistic Gradient Descent Ascent (OGDA) \citep{mertikopoulos2019optimistic,nemirovski2004prox,daskalakis2017training,rakhlin2013online} methods. 
This motivates integrating the double smoothing idea from DS-GDA into an extrapolation-based framework to ensure both universal applicability and optimal achievability.  

We thus propose Doubly Smoothed Optimistic Gradient Descent Ascent (DS-OGDA), an extension of DS-GDA that incorporates an additional extrapolation step in both the primal and dual updates. While this modification may seem natural, it introduces significant technical challenges in the analysis.
Unlike the existing literature, which typically treats nonconvex and C-C problems separately, DS-OGDA must handle both scenarios simultaneously. The extrapolation introduces extra error terms that require a carefully constructed Lyapunov analysis for nonconvex cases. Meanwhile, the smoothing steps dramatically complicate the boundedness analysis of the iterates, necessitating more intricate proofs than those required for standard OGDA to address the C-C case. 

Overall, this paper shows that DS-OGDA, using a single set of parameters, achieves:
\begin{enumerate}[label=(\roman*)] 
\item an $\mathcal{O}(\epsilon^{-4})$ iteration complexity for NC-C and C-NC problems, as well as NC-NC problems satisfying a one-sided K\L{} condition (see Assumption \ref{ass:dual}(ii)). 
\item an $\mathcal{O}(\epsilon^{-2})$
iteration complexity for C-C problems.
\end{enumerate}

Moreover, when additional problem structure is available—such as knowledge of the K\L{} exponent or verification that the problem is C-C—the parameters of DS-OGDA can be accordingly adjusted to yield sharper, and in some cases, optimal iteration complexities. Under the one-sided K\L{} condition with exponent $\theta \in (0,1)$, DS-OGDA achieves an iteration complexity of $\mathcal{O}(\epsilon^{-(4\theta - 2)_+ - 2})$. In particular, for C-C problems, it attains the optimal rate of $\mathcal{O}(\epsilon^{-1})$. These results match the best-known or optimal iteration complexities in the literature, as summarized in Table~\ref{table:main_results}.

\begin{table}[ht]  
\renewcommand\arraystretch{0.4}
\resizebox{\columnwidth}{!}{
\begin{tabular}{c|c|c|c}
\hline
\cellcolor[HTML]{FFFFFF}\textbf{Settings} & \textbf{References}              & 
\textbf{ Game Sta.} &
\textbf{Loops}
\\

\hline
&  Prox-Method \citep{nemirovski2004prox}, EG/OGDA \citep{mokhtari2020convergence}   & \cellcolor[HTML]{FFFFFF}$\mO(\epsilon^{-1})^\star$   
& 1    
   \\ \cline{2-4} 
\multirow{-2}{*}{C-C}          & \cellcolor[HTML]{C0C0C0} \textbf{Theorem \ref{theorem:cc}} & \cellcolor[HTML]{C0C0C0}$\mO(\epsilon^{-1})^\star$      
& \cellcolor[HTML]{C0C0C0}1                                          \\ 
\hline
 &  DS-GDA \citep{zheng2023universal}  & \cellcolor[HTML]{FFFFFF}$\mO(\epsilon^{- (4\theta-2)_+-2})$       
 & 1                                                                   \\ \cline{2-4} 
\multirow{-2}{*}{NC-K\L{}}          & \cellcolor[HTML]{C0C0C0}  \textbf{Theorem \ref{theorem:2} (i) \& (ii)} & \cellcolor[HTML]{C0C0C0}$\mO(\epsilon^{- (4\theta-2)_+-2})$ 
& \cellcolor[HTML]{C0C0C0}1                                           \\ \hline
& Minimax-PPA \citep{lin2020near}, FNE Search \citep{ostrovskii2021efficient}   & $\tilde{\mO}(\epsilon^{-2.5})$ 
& 3                                                                   \\ \cline{2-4} 
    & Multi-Step Projected Gradient  \citep{nouiehed2019solving}   & $\tilde{\mO}(\epsilon^{-3.5})$  
    & 3          \\ \cline{2-4} 
    & AGP \citep{xu2023unified}   & $\mO(\epsilon^{-4})$     
    & 1                                                                   \\ \cline{2-4} 
    &Smoothed-GDA \citep{zhang2020single}  & $\mO(\epsilon^{-4})$    
    & 1                                                                   \\ \cline{2-4}   
     &EG/OGDA \citep{mahdavinia2022tight}  & $\mO(\epsilon^{-6})$    
    & 1 
    \\
    \cline{2-4}
\multirow{-8}{*}{NC-C}      
& \cellcolor[HTML]{C0C0C0} \textbf{Theorem \ref{theorem:2} (iii)} & \cellcolor[HTML]{C0C0C0}$\mO(\epsilon^{-4})$ 
& \cellcolor[HTML]{C0C0C0}1                                           \\ \hline
   & DS-GDA \citep{zheng2023universal}   & \cellcolor[HTML]{FFFFFF}$\mO(\epsilon^{- (4\theta-2)_+-2})$    
   & 1    
   \\ \cline{2-4} 
   \multirow{-2}{*}{K\L{}-NC}       & 
\cellcolor[HTML]{C0C0C0} 
\textbf{Corollary \ref{col:1} (i) \& (ii)} & \cellcolor[HTML]{C0C0C0}$\mO(\epsilon^{- (4\theta-2)_+-2})$
& \cellcolor[HTML]{C0C0C0}1                                           \\ \hline
 &  AGP \citep{xu2023unified}, DS-GDA \citep{zheng2023universal}    & $\mO(\epsilon^{-4})$      
 & 1                                                                   \\ \cline{2-4} 
\multirow{-2}{*}{C-NC}                                             & \cellcolor[HTML]{C0C0C0} \textbf{Corollary \ref{col:1} (iii)} & \cellcolor[HTML]{C0C0C0}$\mO(\epsilon^{-4})$       
& \cellcolor[HTML]{C0C0C0}1                                           \\ \hline
    
\cellcolor{blue!15}{\textbf{Universality} }    & \cellcolor{blue!15}{ \textbf{Theorem \ref{theorem:universal}}} & \cellcolor{blue!15}{${\mathcal{O}(\epsilon^{-4})}$/${\mathcal{O}(\epsilon^{-2})^\star}$} 
& \cellcolor{blue!15}{1}\\
\hline 
\end{tabular} 
}
\caption{
Comparison of iteration complexities for various state-of-the-art methods under different structural assumptions. For C-C problems, we consider convergence to saddle points, as indicated by $\star$. 
}
\label{table:main_results}
\vspace{-1.0em}
\end{table}

\noindent\textbf{Notation.}  
We use bold lowercase letters (e.g., $\x,\y$) to denote vectors, and calligraphic uppercase letters (e.g., $\X,\Y$) to denote sets. For a closed and convex set $\X \subseteq \mathbb{R}^n$, the indicator function $\iota_{\X}:\mathbb{R}^n \to \{0,+\infty\}$ is defined by $\iota_{\X}(\x) = 0$ if $\x \in \X$ and $+\infty$ otherwise. Its sub-differential at a point $\x \in \X$, denoted by $\partial \iota_{\X}(\x)$, is the normal cone to $\X$ at $\x$. For any $\x \in \mathbb{R}^n$, the distance to the set $\X$ is defined as $\dist(\x,\X)=\min_{\y\in \X}\|\x-\y\|$, where $\|\cdot\|$ denotes the Euclidean norm. We write $\diam(\X)=\sup_{\x,\y \in \X}\|\x-\y\|$ for the diameter of $\X$, and $\proj_{\X}(\z)$ for the projection of a point $\z$ onto $\X$. Given $r>0$, the proximal operator of a function (if well-defined) $f:\mathbb{R}^n \to \mathbb{R}$ at a point $\z$ is defined as $\prox_{\tfrac{1}{r}\cdot f}(\z)=\argmin_{\x\in \X} f(\x)+\tfrac{r}{2}\|\x-\z\|^2$. For a differentiable function $f:\mathbb{R}^n \times \mathbb{R}^d \to \mathbb{R}$, we use $\nabla_{\x} f(\x,\y)$ and $\nabla_{\y} f(\x,\y)$ to denote its partial gradients with respect to $\x$ and $\y$, respectively. The notation $a_{+}$ denotes the positive part $\max\{a,0\}$. Finally, the Cartesian product of two sets $\X$ and $\Y$ is denoted by $\X\times\Y$.

\vspace{2mm}
\noindent \textbf{Organization.} 
The remainder of this paper is organized as follows. Section \ref{sec:main} introduces the proposed DS-OGDA method and presents our main theoretical results. Section \ref{sec:proof} provides detailed proofs. In Section~\ref{sec:tight}, we analyze the limitations of existing algorithms in achieving optimality. Finally, Section \ref{sec:conclusion} concludes the paper.

\section{Main Results}\label{sec:main}
In this section, we present our main theoretical results. We begin by stating a smoothness assumption on the objective function~$f$, which will be maintained throughout the paper.
\begin{assumption}\label{ass:1}
The function $f:\R^n\times \R^d\rightarrow \R$ is continuously differentiable, and its partial gradients $\nabla_{\x} f(\cdot,\cdot)$ and $\nabla_{\y} f(\cdot,\cdot)$ are $L_{\x}$ and $L_{\y}$-Lipschitz continuous on $\X\times \Y$. In particular, for all  $\x,\x' \in \X$ and $\y,\y'\in \Y$, we have 
		\[
		\begin{aligned}
			&\|\nabla_{\x}f(\x,\y)-\nabla_{\x}f(\x',\y')\| \leq L_{\x}(\|\x-\x'\|+\|\y-\y'\|),\\
			&\|\nabla_{\y}f(\x,\y)-\nabla_{\y}f(\x',\y')\| \leq L_{\y}(\|\x-\x'\|+\|\y-\y'\|). 
		\end{aligned}
		\]
 Without loss of generality (WLOG),  in the following, we let 
 $L_{\x}= L_{\y}=L\geq 1$.
  \end{assumption}
  \subsection{DS-OGDA}
We now introduce our proposed algorithm, Doubly Smoothed Optimistic GDA (DS-OGDA). 
The algorithm is built upon a regularized function $F:\R^n \times \R^d \times \R^n \times \R^d \rightarrow \R$, defined as follows:
\[
F(\x,\y,\z,\v)=f(\x,\y)+\frac{r_{\x}}{2}\|\x-\z\|^2-\frac{r_{\y}}{2}\|\y-\v\|^2, 
\] 
where $r_{\x}, r_{\y}\geq 0$ are smoothing parameters.

Let $\u = (\x, \y, \z, \v) \in \mathcal{X} \times \mathcal{Y} \times \mathbb{R}^n \times \mathbb{R}^d$. To describe the update directions in our algorithm, we define the operator 
\[
G_{\u}= [G_{\x}; G_{\y};G_{\z};G_{\v}] := [\nabla_{\x} F; -\nabla_{\y} F;\nabla_{\z} F; -\nabla_{\v} F],
\]
where each block corresponds to a descent or ascent direction in its respective variable. For simplicity, we use $G_{\x}^t$ (and similarly $G_{\y}^t, G_{\z}^t, G_{\v}^t$) to denote the evaluation at iteration $t>0$. 

Algorithm~\ref{alg:1} summarizes the DS-OGDA method, which synthesizes ideas from OGDA~\citep{mokhtari2020unified} and DS-GDA~\citep{zheng2023universal}, incorporating extrapolation steps to accelerate convergence in the convex-concave setting. The main distinction from DS-GDA, highlighted in red, is the inclusion of the gradient difference in both the $\x$ and $\y$ updates.

\begin{algorithm}
\caption{Doubly Smoothed Optimistic GDA (DS-OGDA)}\label{alg:1}
\begin{algorithmic}
\STATE{Initial  $\x^{-1}=\x^0,\y^{-1}=\y^0,\z^{-1}=\z^0=\x^0,\v^{-1}=\v^0=\y^0$, step-sizes $\eta_{\x} ,\eta_{\y}>0$, and smoothing parameters $0<\beta_{\x},\beta_{\y}<1$}
\FOR{$t=0,1,2, \ldots$}
\STATE{$\x^{t+1}=\proj_{\X}(\x^t-\eta_{\x}G_{\x}^t{\color{red!80!black}{ + \eta_{\x} \left(G_{\x}^{t-1}-G_{\x}^t\right)}})$;}
\STATE{$\y^{t+1}=\proj_{\Y}(\y^t-\eta_{\y} G_{\y}^t {\color{red!80!black}{+\eta_{\y} \left(G_{\y}^{t-1}-G_{\y}^t\right) }})$;}
\STATE{$\z^{t+1}=\z^t+ \beta_{\x}(\x^{t}-\z^t) $;}
\STATE{$\v^{t+1}=\v^t+ \beta_{\y}(\y^{t}-\v^t) $.}
\ENDFOR 
\end{algorithmic}
\end{algorithm}

\subsection{Convergence Results}
We present the main convergence results of DS-OGDA in this subsection, under a range of structural assumptions. We begin by introducing the standard stationarity measures that will be used throughout the rest of the paper.
\begin{definition}[Stationarity measures]\label{def:1}
Let $\epsilon\geq 0$ be given. 
\begin{enumerate}[label=(\roman*)] 
\setlength{\itemsep}{5pt}
\item The pair $(\x,\y)\in \mathcal{X}\times \mathcal{Y}$ is an $\epsilon$-saddle point {\rm($\epsilon$-SP)} if 
   \[ \max_{\y' \in \Y} f(\x,\y')-\min_{\x' \in \X} f(\x',\y) \leq \epsilon.\]
\item The pair $(\x,\y)\in \mathcal{X}\times \mathcal{Y}$ is an $\epsilon$-game stationary point {\rm ($\epsilon$-GS)} if 
			\[
\dist(\bz,\nabla_{\x}f(\x,\y)+\partial\iota_\X(\x))\leq \epsilon \quad \text{and} \quad \dist(\bz,-\nabla_{\y}f(\x,\y)+\partial\iota_\Y(\y))\leq \epsilon.
			\]
\item Given $r>L$, the point $ \x \in \mathcal{X} $ is an $\epsilon$-optimization stationary point {\rm ($\epsilon$-OS)} if  
			\[
			\left\|\prox_{\tfrac{1}{r}\cdot \phi}(\x)-{\x}\right\|\leq\epsilon,
   \] 
   where $\phi$ is the value function given by $\phi(\x) = \max_{\y' \in \Y} f(\x,\y') + \iota_{\X}(\x)$.
   \end{enumerate}
\end{definition}
\begin{remark}
In the C-C setting, we focus on convergence to SP, while for nonconvex minimax problems, we  consider both GS and OS. The relationships among these stationarity concepts have been extensively studied in prior works \citep{lin2020gradient,li2022nonsmooth}.
\end{remark} 
Next, we introduce the structural assumptions on Problem~\eqref{eq:prob} used in our convergence analysis. In particular, for the dual problem, we assume either concavity or that the associated function satisfies the K\L{} property.
\begin{assumption}[Structural assumptions for the dual problem]\label{ass:dual} One of the following conditions is satisfied by the dual problem:
\begin{enumerate}[label=(\roman*)] \setlength{\itemsep}{1pt}
\item {\rm (Concavity):} For all $\x \in \mathcal{X}$, the function $f(\x, \cdot): \mathbb{R}^d \rightarrow \mathbb{R}$ is concave.
\item {\rm(K\L{} property):} For all $\x \in \mathcal{X}$, the maximization problem $\max_{\y \in \Y}f(\x,\y)$
 has a nonempty solution set and a finite optimal value. Moreover, there exist constants $\tau_{\y} > 0$ and $\theta \in (0,1)$ such that 
		\[
				\left(\max_{\y' \in\Y} f(\x,\y') -f(\x,\y)\right)^{\theta} \leq \frac{1}{\tau_{\y}}\dist\left(\bz,-\nabla_{\y}f(\x,\y)+\partial\b1_\Y(\y)\right)
		\]  
        for all $\x \in \mathcal{X}$ and $\y \in \mathcal{Y}$.
\end{enumerate}
 \end{assumption}
The assumptions on the primal problem are analogous to those on the dual problem and are imposed symmetrically. They are presented in Assumption~\ref{ass:primal}.
 
 \begin{assumption}[Structural assumptions for the primal problem]\label {ass:primal} One of the following conditions is satisfied by the primal problem: 
 \begin{enumerate}[label=(\roman*)] \setlength{\itemsep}{1pt}
\item {\rm (Convexity):} For all $\y \in \mathcal{Y}$, the function $f(\cdot, \y): \mathbb{R}^n \rightarrow \mathbb{R}$ is convex.
 \item {\rm (K\L{} property):} 
 For all $\y \in \mathcal{Y}$, the minimization problem $\min_{\x \in \X}f(\x,\y)$ has a nonempty solution set and a finite optimal value. Moreover, there exist constants $\tau_{\x} > 0$ and $\theta \in (0,1)$ such that 
		\[
				\left(f(\x,\y)-\min_{\x' \in\X} f(\x',\y)  \right)^{\theta} \leq \frac{1}{\tau_{\x}}\dist(\bz,\nabla_{\x}f(\x,\y)+\partial\b1_\X(\x))
		\]   
        for all $\x \in \mathcal{X}$ and $\y \in \mathcal{Y}$.
  \end{enumerate}
 \end{assumption}
For C-C problems, an additional assumption is required when the constraint sets are only closed and convex.
 \begin{assumption}[Existence of saddle point]\label{ass:5}
The solution set 
\[
\U_{\textrm{pd}}^\star=\left\{[\x;\y] \in \R^{n+d}:  (\x,\y) \text{ is a saddle point of problem \eqref{eq:prob}} \right\}
\]
is nonempty.
\end{assumption}

\begin{remark}
(i) The K\L{} property in Assumptions \ref{ass:dual}(ii) and \ref{ass:primal}(ii) requires a uniform K\L{} exponent $\theta \in (0,1)$ and constants $\tau_{\y}, \tau_{\x} > 0$ for all $\x \in \mathcal{X}$ and $\y \in \mathcal{Y}$. This captures the global landscape of $f(\x, \cdot)$ and $f(\cdot, \y)$, respectively. The K\L{} property is a widely adopted assumption in the literature for establishing the convergence of the sequence of iterates \citep{bolte2007lojasiewicz, attouch2010proximal, attouch2013convergence, chen2021proximal}. In our context, the one-sided K\L{} property is crucial for balancing the primal and dual updates, thereby ensuring the convergence of the DS-OGDA algorithm. This perspective was first introduced by \citet{li2022nonsmooth} and further explored in \citet{zheng2023universal}. 
(ii) Assumption~\ref{ass:5} can be omitted when both constraint sets $\mathcal{X}$ and $\mathcal{Y}$ are compact. In this case, the existence of a saddle point is guaranteed by the minimax theorem under the convex–concave structure. 
\end{remark}
We are now ready to demonstrate the universality of DS-OGDA. WLOG, we consider the case when $\text{diam}(\mathcal{X}), \text{diam}(\mathcal{Y}) \geq 1$.

    \begin{theorem}[Universal  complexity results of DS-OGDA]\label{theorem:universal}
    Suppose that at least one of the following conditions holds: Assumption~\ref{ass:dual}(i), Assumption~\ref{ass:dual}(ii), Assumption~\ref{ass:primal}(i), or Assumption~\ref{ass:primal}(ii).  For any $\underline{\varepsilon} > 0$ and any integer $T > 1$, if the K\L{} exponent $\theta$ of the primal or dual problems satisfies $\theta \in [\underline{\varepsilon}, 1]$, we can select a set of symmetric parameters with $0 < r_{\x} = r_{\y} = \mathcal{O}(1)$, $0 < \eta_{\x} = \eta_{\y} = \mathcal{O}(1)$, and $0 < \beta_{\x} = \beta_{\y} \leq \mathcal{O}(T^{-1/2})$. Then, there exists an index $t \in \{0, 1, \ldots, T-1\}$ such that: 
    \begin{enumerate}[label=(\roman*)] \setlength{\itemsep}{1pt} \item {\rm (General)} The point $(\x^{t+1}, \y^{t+1})$ is an $\mathcal{O}(T^{-1/4})$-GS of Problem \eqref{eq:prob}. 
    
    \item {\rm (C-C Problems)} If both Assumptions \ref{ass:dual}(i) and \ref{ass:primal}(i) hold, then $(\x^{t+1}, \y^{t+1})$ is an $\mathcal{O}(T^{-1/2})$-SP of Problem \eqref{eq:prob}. \end{enumerate} \end{theorem}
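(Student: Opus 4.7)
The plan is to analyze DS-OGDA through a Lyapunov/potential-function argument that simultaneously handles all four structural hypotheses with the same parameter choice. The central object is the regularized function $F$, which, because of the smoothing terms $\tfrac{r_{\x}}{2}\|\x-\z\|^{2}$ and $-\tfrac{r_{\y}}{2}\|\y-\v\|^{2}$, is $r_{\x}$-strongly convex in $\x$ and $r_{\y}$-strongly concave in $\y$. I would work with the gap function $\Phi(\x,\y,\z,\v) = \max_{\y'\in\Y} F(\x,\y',\z,\v) - \min_{\x'\in\X} F(\x',\y,\z,\v)$ and augment it into
\[
\Psi_{t} \;=\; \Phi(\x^{t},\y^{t},\z^{t},\v^{t}) \;+\; a\bigl(\|\x^{t}-\x^{t-1}\|^{2}+\|\y^{t}-\y^{t-1}\|^{2}\bigr) \;+\; b\bigl(\|\x^{t}-\z^{t}\|^{2}+\|\y^{t}-\v^{t}\|^{2}\bigr),
\]
for constants $a,b>0$ to be tuned. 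The iterate-difference terms are exactly the buffer needed to absorb the OGDA correction $\eta_{\x}(G_{\x}^{t-1}-G_{\x}^{t})$ via Lipschitzness of $\nabla f$ and of the quadratic regularizer, mimicking the standard OGDA/EG trick.

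The first step is a one-step descent of the form
\[
\Psi_{t+1}-\Psi_{t} \;\leq\; -c_{1}\bigl(\|\x^{t+1}-\x^{t}\|^{2}+\|\y^{t+1}-\y^{t}\|^{2}\bigr) \;-\; c_{2}\bigl(\beta_{\x}\|\x^{t}-\z^{t}\|^{2}+\beta_{\y}\|\y^{t}-\v^{t}\|^{2}\bigr),
\]
valid under any of Assumption \ref{ass:dual}(i), \ref{ass:dual}(ii), \ref{ass:primal}(i), or \ref{ass:primal}(ii). This inequality is obtained by (a) applying the three-point optimistic-gradient inequality to the strongly convex-concave $F$, which produces the $\|\x^{t+1}-\x^{t}\|^{2}$ and $\|\y^{t+1}-\y^{t}\|^{2}$ negative terms after absorbing $\|G_{\x}^{t}-G_{\x}^{t-1}\|^{2}$ through Lipschitzness; and (b) controlling the drift of $\Phi$ with respect to the $\z,\v$ updates — this is the step where the DS-GDA bookkeeping of \citet{zheng2023universal} is reused, since concavity in $\y$ or the KŁ property on $\max_{\y'}f(\x,\y')-f(\x,\y)$ (symmetrically for the primal) is what keeps the $\z,\v$ shift from blowing up $\Phi$.

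For part (i), telescoping the descent from $t=0$ to $T-1$ and using $\Psi_{t}\geq 0$ yields $\sum_{t}\bigl(\|\x^{t+1}-\x^{t}\|^{2}+\|\y^{t+1}-\y^{t}\|^{2}\bigr)=\mathcal{O}(1)$ and $\beta\sum_{t}\bigl(\|\x^{t}-\z^{t}\|^{2}+\|\y^{t}-\v^{t}\|^{2}\bigr)=\mathcal{O}(1)$. With $\beta_{\x}=\beta_{\y}=\mathcal{O}(T^{-1/2})$ the best-iterate bounds become $\|\x^{t+1}-\x^{t}\|+\|\y^{t+1}-\y^{t}\|=\mathcal{O}(T^{-1/2})$ and $\|\x^{t}-\z^{t}\|+\|\y^{t}-\v^{t}\|=\mathcal{O}(T^{-1/4})$. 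I would then write the projection optimality condition of the $\x$-update as a residual in $\nabla_{\x}f(\x^{t+1},\y^{t+1})+\partial\iota_{\X}(\x^{t+1})$ and bound it by a combination of $L(\|\x^{t+1}-\x^{t}\|+\|\y^{t+1}-\y^{t}\|)$, $r_{\x}\|\x^{t}-\z^{t}\|$, the extrapolation residual $\|G_{\x}^{t}-G_{\x}^{t-1}\|$, and $\eta_{\x}^{-1}\|\x^{t+1}-\x^{t}\|$; the dominant term $r_{\x}\|\x^{t}-\z^{t}\|=\mathcal{O}(T^{-1/4})$ delivers the announced GS rate, and the same argument symmetrized in $\y$ closes part (i).

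For part (ii), under joint convexity–concavity I would switch from a descent argument to an averaging argument: apply the standard OGDA/three-point inequality comparing the iterates to an arbitrary $(\x,\y)\in\X\times\Y$, telescope, and invoke convexity–concavity of $f$ to turn the resulting bound into a duality-gap bound on the averaged iterate for $F$. The gap for $f$ differs from the gap for $F$ by the regularization penalties, which are $\mathcal{O}(r_{\x}\diam(\X)^{2}+r_{\y}\diam(\Y)^{2})$; because $\z,\v$ drift by at most $\beta\cdot\diam$ per step, summing this drift over $T$ iterations contributes an additional $\mathcal{O}(\beta T\cdot\diam^{2})=\mathcal{O}(T^{1/2})$ that, after dividing by $T$, yields a $\mathcal{O}(T^{-1/2})$-SP guarantee — exactly the rate claimed. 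The main obstacle will be orchestrating the two analyses under a \emph{single} parameter choice: the OGDA correction and the slow smoothing produce cross terms that do not have a definite sign, so the constants $a,b$ in $\Psi_{t}$ and the step-size $\eta_{\x}=\eta_{\y}$ must be tuned simultaneously against $L$ and $r_{\x}=r_{\y}$ so that the descent holds with universal constants, and so that the C-C averaging argument picks up only $\mathcal{O}(T^{-1/2})$ perturbations rather than diverging.
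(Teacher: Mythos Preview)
Your Lyapunov choice for part (i) is the main problem. You take the strongly-convex--strongly-concave duality gap $\Phi=\max_{\y'}F-\min_{\x'}F$ and claim a clean descent $\Psi_{t+1}-\Psi_t\le -c_1(\cdot)-c_2\beta(\cdot)$ under \emph{any one} of the four structural hypotheses. The paper does not attempt anything like this. Its Lyapunov (equation~\eqref{eq:lya_fun}) is asymmetric --- built from the nested functions $d=\min_{\x}F$, $p=\max_{\y}d$, $q=\max_{\v}p$ --- and even then the descent estimate (Proposition~\ref{proposition:suff_dec}) carries an irreducible \emph{negative} residual $-8r_{\x}\beta_{\x}\|\x(\z^{t+1},\v(\z^{t+1}))-\x(\z^{t+1},\v_+^t(\z^{t+1}))\|^{2}$. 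That residual is the whole point: it is where the one-sided assumption enters, via the primal--dual error bound of Proposition~\ref{proposition:NC_KL}. Your symmetric gap function has no analogue of $q(\z)$, so when $\z$ drifts there is nothing forcing $\Phi$ to decrease; your appeal to ``DS-GDA bookkeeping'' is exactly the step where the analysis breaks, because that bookkeeping is tied to the asymmetric $d$--$p$--$q$ decomposition, not to the gap. Moreover, universality in the paper is \emph{not} obtained through a universal Lyapunov: Theorem~\ref{theorem:2} uses one Lyapunov (for the dual assumption), Corollary~\ref{col:1} uses its mirror (for the primal assumption), and Lemma~\ref{lemma:symm_steps} shows a single symmetric step-size set satisfies Condition~\ref{cond:step} for both. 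Part~(i) then follows by invoking whichever case applies.

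Your plan for part (ii) is closer to the paper's, which combines Lemma~\ref{lem:fund_cc} with an OGDA telescoping bound on $\sum_t\langle G_{\u}^{t+1},\u^{t+1}-\u\rangle$ (equations~\eqref{eq:compact_1}--\eqref{eq:compact_main}) plus a $\tfrac{r}{\beta}\sum_t\|\u_{\mathrm{ex}}^{t+1}-\u_{\mathrm{ex}}^{t}\|^{2}=\mathcal O(T^{1/2})$ estimate. Two adjustments are needed. First, the result is best-iterate (via Lemma~\ref{lem:fund_cc}), not averaged; convexity--concavity of $F$ is used to pass from the $F$-gap to $\langle G_{\u}^{t+1},\u^{t+1}-\u\rangle$, not to average. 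Second, with $r=\mathcal O(1)$ and $\beta=\mathcal O(T^{-1/2})$ the extra-variable block contributes a $\tfrac{r}{2\beta}\|\u_{\mathrm{ex}}-\u_{\mathrm{ex}}^{0}\|^{2}=\mathcal O(T^{1/2})$ term in \eqref{eq:compact_main}; this is separate from the per-step drift you track and must be accounted for to land at $\mathcal O(T^{-1/2})$ after dividing by $T$.
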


When additional structural properties, such as convex–concavity, are verified, the optimal convergence rate can be achieved. In such cases, the boundedness assumptions on the constraint sets are no longer required.
\begin{theorem}[Iteration complexity for C-C problems]\label{theorem:cc} 
    Suppose $\X$ and $\Y$ are only closed and convex sets, and the function $f$ is convex-concave, i.e., both Assumptions \ref{ass:dual}(i) and \ref{ass:primal}(i) hold. We further assume the existence of a saddle point, i.e., Assumption \ref{ass:5} holds. For any integer $T> 1$, if we set $0\leq r_{\x}=r_{\y} \leq\mO(T^{-1})$, $0< \eta_{\x}=\eta_{\y}\leq \tfrac{1}{7(L +r_{\x})}  $, and $0\leq  \beta_{\x}=\beta_{\y}=r_{\x}\eta_{\x} \leq \mO(T^{-1})$, then there exists an index $t\in\{0,1,\ldots, T-1\}$ such that
 $\left( \x^{t+1},  \y ^{t+1} \right)$ is an $\mO(T^{-1})$-SP of Problem \eqref{eq:prob}.
\end{theorem}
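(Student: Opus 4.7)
The plan is to reinterpret DS-OGDA as a \emph{partial OGDA} on the enlarged convex--concave saddle function $F$ and then run an OGDA-style Lyapunov analysis, translating the resulting $\mathcal{O}(T^{-1})$ primal--dual gap of $F$ back to a gap of $f$ at the expense of a negligible $\mathcal{O}(T^{-1})$ regularization error.

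First I would exploit the parameter tie $\beta_{\x}=r_{\x}\eta_{\x}$ and $\beta_{\y}=r_{\y}\eta_{\y}$. Since $G_{\z}^t=-r_{\x}(\x^t-\z^t)$ and $G_{\v}^t=-r_{\y}(\y^t-\v^t)$, the smoothing updates reduce exactly to plain gradient steps $\z^{t+1}=\z^t-\eta_{\x}G_{\z}^t$ and $\v^{t+1}=\v^t-\eta_{\y}G_{\v}^t$. Thus Algorithm~\ref{alg:1} acts as OGDA in the $(\x,\y)$-block and as vanilla projected gradient in the $(\z,\v)$-block of the common operator $G_{\u}$. Because $f$ is convex--concave and the quadratic penalties are convex in $(\x,\z)$ and concave in $(\y,\v)$, $F$ is convex--concave in $((\x,\z),(\y,\v))$, so $G_{\u}$ is monotone. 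Assumption~\ref{ass:5} supplies a saddle $(\x^\star,\y^\star)$ of $f$, which lifts to a saddle $\u^\star=(\x^\star,\y^\star,\x^\star,\y^\star)$ of $F$ since the regularizers vanish there.

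Next I would adapt the OGDA analyses of \citet{mokhtari2020unified,mokhtari2020convergence} with a Lyapunov functional of the form
\[
\Phi^t=\|\u^t-\u^\star\|^2+2\eta_{\x}\langle G_{\x}^{t-1}-G_{\x}^t,\,\x^t-\x^\star\rangle+2\eta_{\y}\langle G_{\y}^{t-1}-G_{\y}^t,\,\y^t-\y^\star\rangle+c\,\|\u^t-\u^{t-1}\|^2
\]
for a suitable $c>0$. Combining the projection inequality on the $(\x,\y)$-block, the exact step identity on the $(\z,\v)$-block, the monotonicity of $G_{\u}$, and the Lipschitz bound $\|G_{\x}^t-G_{\x}^{t-1}\|\leq(L+r_{\x})\|\u^t-\u^{t-1}\|$, the step-size choice $\eta_{\x}=\eta_{\y}\leq 1/(7(L+r_{\x}))$ should produce a per-step inequality $\Phi^{t+1}+2\eta_{\x}\Delta^{t+1}\leq\Phi^t$, where $\Delta^{t+1}$ is a nonnegative gap-type quantity for $F$ at $\u^{t+1}$. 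Telescoping from $t=0$ to $T-1$ and applying Jensen's inequality at the ergodic (or best) iterate yields an $\mathcal{O}(T^{-1})$ primal--dual gap of $F$. Since $r_{\x}=r_{\y}=\mathcal{O}(T^{-1})$ and the Lyapunov descent keeps $\|\x^t-\z^t\|$ and $\|\y^t-\v^t\|$ bounded, the quadratic penalty contributes only $\mathcal{O}(T^{-1})$, so the gap transfers from $F$ back to $f$, producing an $\mathcal{O}(T^{-1})$-SP of problem~\eqref{eq:prob}.

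The main obstacle is the asymmetry between the extrapolated $(\x,\y)$-block and the non-extrapolated $(\z,\v)$-block: the classical OGDA cancellations presume a gradient-difference correction in every variable, but here the $\z,\v$ updates are vanilla. The resolution leverages the fact that $G_{\z}$ and $G_{\v}$ are linear in $\u$ with coefficients $r_{\x},r_{\y}=\mathcal{O}(T^{-1})$, so their missing OGDA corrections produce only $\mathcal{O}(T^{-1})\|\u^t-\u^{t-1}\|$ residuals that the negative quadratic term $c\|\u^t-\u^{t-1}\|^2$ in $\Phi^t$ absorbs. A secondary difficulty, since $\X$ and $\Y$ are only closed convex, is that iterate boundedness must be extracted from the Lyapunov decrease itself (bootstrapping off Assumption~\ref{ass:5}) rather than read off from diameter bounds, requiring care in choosing the test point used to define the primal--dual gap.
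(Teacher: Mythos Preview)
Your overall strategy matches the paper's: recast DS-OGDA as an inexact PPM/OGDA on the regularized saddle function $F$, use the joint convex--concavity of $F$ together with the lifted saddle $\u^\star=(\x^\star,\y^\star,\x^\star,\y^\star)$, establish iterate boundedness from the descent estimate, and then sum to obtain the $\mathcal{O}(T^{-1})$ gap (cf.\ Lemma~\ref{lem:cc}, Lemma~\ref{lem:fund_cc}, Proposition~\ref{proposition:cc}).

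One technical point needs tightening. The ``missing OGDA corrections'' on the $(\z,\v)$-block are not of size $\mathcal{O}(T^{-1})\|\u^t-\u^{t-1}\|$; they are of size $\mathcal{O}(r)\,\|\u^t-\u^{t-1}\|\cdot\|\u_{\textrm{ex}}^{t+1}-\u_{\textrm{ex}}\|$, carrying a distance-to-test-point factor, so they cannot be swallowed by the term $c\|\u^t-\u^{t-1}\|^2$ alone as you suggest. The paper avoids this by telescoping the $(\z,\v)$ residual instead: for the gap bound it writes $\langle G^{t+1}_{\u_{\textrm{ex}}}-G^{t}_{\u_{\textrm{ex}}},\u_{\textrm{ex}}^{t+1}-\u_{\textrm{ex}}\rangle$ as a telescoping difference plus the remainder $r\|\u_{\textrm{pd}}^t-\u_{\textrm{ex}}^t\|^2$ (eq.~\eqref{eq:bd_9}), and for boundedness it exploits $\u_{\textrm{pd}}^\star=\u_{\textrm{ex}}^\star$ to obtain a telescoping decomposition that does \emph{not} require $r$ to be small (eq.~\eqref{eq:cc_3}). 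The accumulated remainder $r\sum_t\|\u_{\textrm{pd}}^t-\u_{\textrm{ex}}^t\|^2$ is then $\mathcal{O}(1)$ once boundedness and $r=\mathcal{O}(T^{-1})$ are in place. Your route can be repaired along similar lines, but as written the absorption argument does not close.
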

The above theorem illustrates that verifying additional structural conditions—such as convex–concavity allows more flexibility in parameter choices.
This advantage extends to nonconvex minimax problems as well. Specifically, for NC-C and C-NC problems, it suffices for only one of the smoothing step sizes, $\beta_{\x}$ or $\beta_{\y}$, to depend on the total number of iterations $T$. Furthermore, in NC-K\L{} and K\L{}-NC problems, such prior information not only enables this flexibility, but also leads to improved iteration complexity.

\begin{theorem}[Iteration complexity for NC-(N)C problems]\label{theorem:2} 
Suppose $\Y$ is compact and $\X$ is a closed and convex set.
For any integer $T > 1$, we can select a set of parameters with $0 < r_{\x}, r_{\y}, \eta_{\x}, \eta_{\y}, \beta_{\x}, \beta_{\y} = \mO(1)$, and there exists an index $t \in \{0, 1, \ldots, T-1\}$ such that the following hold:
\begin{enumerate}[label=(\roman*)] \setlength{\itemsep}{1pt}
\item  {\rm (Concave dual)}  Suppose Assumption \ref {ass:dual}(i) holds. If $\max_{\y\in \Y} f(\cdot, \y)$ is bounded below on $\X$ and $\beta_{\x} \leq \mO(T^{-1/2})$, then $(\x^{t+1},\y^{t+1})$ is an $\mO(T^{-1/4})$-GS and $\z^{t+1}$ is an $\mO(T^{-1/4})$-OS of Problem \eqref{eq:prob}. 
\item {\rm (K\L{} Property)} Suppose Assumption \ref {ass:dual}(ii) holds with $\theta\in (0,1)$. If $\X$ is compact and $\beta_{\x}\leq \mO(T^{- \left(2\theta-1\right)_{+}/2\theta})$, then $(\x^{t+1},\y^{t+1})$ is an $\mO(T^{-1/\left((4\theta-2)_++2\right)})$-GS and $\z^{t+1}$ is an $\mO(T^{-1/\left((4\theta-2)_++2\right)})$-OS of Problem \eqref{eq:prob}. 
\end{enumerate}
\end{theorem}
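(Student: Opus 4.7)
The plan is to extend the Lyapunov-function analysis developed for DS-GDA to accommodate the OGDA-style extrapolation, while keeping the smoothing mechanism responsible for coupling the primal and dual updates. I would first introduce a potential of the form
\[
\Phi^t = F(\x^t,\y^t,\z^t,\v^t) + \alpha_1 P(\z^t,\v^t) + \alpha_2\left(\|\x^t-\x^{t-1}\|^2+\|\y^t-\y^{t-1}\|^2\right),
\]
where $P(\z,\v) = \max_{\y'\in\Y} F(\x^\star(\z,\v),\y',\z,\v)$ is the standard primal envelope associated with the smoothed problem (well defined because, for $r_\x,r_\y = \mO(1)$ chosen appropriately, $F$ is strongly convex in $\x$ and strongly concave in $\y$ for fixed $\z,\v$), and the last term is the OGDA anchor that will absorb the extrapolation error. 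The reason for this decomposition is that $F$ captures one-step descent of the inner proximal updates, $P$ tracks primal progress driven by the slow $\z$-update, and the squared-difference term turns the $G^{t-1}-G^t$ correction into a telescoping quantity.

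Next, I would prove a one-step descent inequality of the form
\[
\Phi^{t+1}-\Phi^t \leq -a_\x\|\x^{t+1}-\x^t\|^2 - a_\y\|\y^{t+1}-\y^t\|^2 - a_\z\frac{\|\z^{t+1}-\z^t\|^2}{\beta_\x} - a_\v\frac{\|\v^{t+1}-\v^t\|^2}{\beta_\y} + b\,\beta_\x\,D,
\]
with strictly positive constants $a_{\bullet}$ and a problem-dependent constant $D$ controlled by $\diam(\X),\diam(\Y)$. The key steps are: (a) a proximal-point-style descent on $F$ for the $\x,\y$ blocks, where strong convexity/concavity from $r_\x,r_\y$ absorbs the OGDA cross terms $\langle G_\x^{t-1}-G_\x^t, \x^{t+1}-\x^t\rangle$ via Young's inequality, yielding the anchor update in $\Phi$; (b) the envelope descent for $P$ using $\z^{t+1}-\z^t = \beta_\x(\x^t-\z^t)$ together with Danskin's theorem, which requires the $F$-block to be strongly convex-concave so that $\x^\star(\z,\v)$ is Lipschitz; (c) careful absorption of the slack term $b\,\beta_\x\,D$ arising from mismatches between $\x^t$ and $\x^\star(\z^t,\v^t)$ inside $P$.

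Telescoping and using that $\Phi^t$ is bounded below (by the assumption that $\max_{\y}f(\cdot,\y)$ is bounded below in case (i), or by compactness of $\X$ in case (ii)), I obtain
\[
\sum_{t=0}^{T-1}\left(\|\x^{t+1}-\x^t\|^2+\|\y^{t+1}-\y^t\|^2+\frac{\|\z^{t+1}-\z^t\|^2}{\beta_\x}+\frac{\|\v^{t+1}-\v^t\|^2}{\beta_\y}\right) \leq \mO(1)+\mO(T\beta_\x).
\]
An averaging argument yields an index $t$ where each squared difference is small, and the choice $\beta_\x = \mO(T^{-1/2})$ in case (i) balances the two sides to give $\|\x^{t+1}-\x^t\|,\|\y^{t+1}-\y^t\| = \mO(T^{-1/4})$ and $\|\x^t-\z^t\|,\|\y^t-\v^t\| = \mO(T^{-1/4})$. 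Converting stationarity of $F$ into stationarity of $f$ costs an error $r_\x\|\x^{t+1}-\z^{t+1}\|+r_\y\|\y^{t+1}-\v^{t+1}\|$, which is also $\mO(T^{-1/4})$, yielding the $\epsilon$-GS claim; the $\epsilon$-OS claim on $\z^{t+1}$ follows from a standard identification between the smoothed proximal fixed point and $\z$.

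For part (ii), the one-sided K\L{} property replaces the concavity-based envelope argument: since the dual problem is not concave, the term $P(\z,\v)$ can only be controlled via the K\L{} inequality, which converts a dual optimality gap into a gradient-type quantity with exponent $\theta$. After re-telescoping with the K\L{} reweighting, the effective rate on $\beta_\x$ becomes $\mO(T^{-(2\theta-1)_+/2\theta})$, which propagates through the above conversion to produce the $\mO(T^{-1/((4\theta-2)_++2)})$ rate. The main obstacle across both parts is the one-step descent: standard OGDA analyses rely on monotonicity of the operator $G_\u$, which fails in the nonconvex regime, so the extrapolation error $G^{t-1}-G^t$ must be absorbed entirely by the strong convex-concave structure of $F$ and the telescoping anchor term, and this balance is precisely what dictates the allowable ranges of $\eta_\x,\eta_\y,r_\x,r_\y$ and the scaling of $\beta_\x$ with $T$.
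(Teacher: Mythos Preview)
Your high-level strategy (Lyapunov descent, telescoping, balancing $\beta_{\x}$ against $T$) matches the paper, but the specific potential you propose is too weak and will not yield the one-step descent you claim. The difficulty is the $\y$-update: $\y^{t+1}$ is an \emph{ascent} step on $F$, so $F(\x^{t+1},\y^{t+1},\z^{t+1},\v^{t+1})-F(\x^{t+1},\y^{t},\z^{t+1},\v^{t+1})$ is of order $+\tfrac{1}{\eta_{\y}}\|\y^{t+1}-\y^t\|^2$, while your second ingredient $P(\z,\v)=\max_{\y'}F(\x^\star(\z,\v),\y',\z,\v)$ is independent of $\y$ and therefore cannot offset this increase. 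Consequently your $\Phi$ increases along the dual direction and the inequality $\Phi^{t+1}-\Phi^t\le -a_{\y}\|\y^{t+1}-\y^t\|^2+\cdots$ cannot hold. The paper's Lyapunov fixes exactly this by inserting the intermediate function $d(\y,\z,\v)=\min_{\x\in\X}F(\x,\y,\z,\v)$: the combination $F-2d$ decreases under the $\y$-ascent because, by Danskin, $d$ increases at essentially the same first-order rate as $F$ when $\x^t$ is close to $\x(\y^t,\z^t,\v^t)$. A second, related omission is the outer envelope $q(\z)=\max_{\v}p(\z,\v)$: your $P$ still depends on $\v$, so the $\v$-ascent step likewise pushes $P$ upward and is not absorbed. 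The paper's potential $F-2d+2q$ is engineered so that each of the four blocks ($\x$-descent, $\y$-ascent, $\z$-descent, $\v$-ascent) contributes a nonpositive increment, and this four-layer structure is not optional.

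Two smaller issues follow from the same root cause. First, your OGDA anchor contains only $\|\x^t-\x^{t-1}\|^2+\|\y^t-\y^{t-1}\|^2$, but $G_{\x}^{t-1}-G_{\x}^t$ depends on $\z^{t-1}-\z^t$ through the regularizer $r_{\x}(\x-\z)$ (and similarly for $\v$); the paper accordingly adds $\tfrac{r_{\x}}{2\beta_{\x}}\|\z^t-\z^{t-1}\|^2+\tfrac{r_{\y}}{2\beta_{\y}}\|\v^t-\v^{t-1}\|^2$ to the potential. Second, your residual ``$b\,\beta_{\x}\,D$'' with constant $D$ hides the actual mechanism: the negative leftover in the descent estimate is $-8r_{\x}\beta_{\x}\|\x(\z^{t+1},\v(\z^{t+1}))-\x(\z^{t+1},\v_+^t(\z^{t+1}))\|^2$, and controlling it requires a \emph{primal--dual error bound} (Proposition~\ref{proposition:NC_KL}) that converts this distance into a power of $\|\v_+^t(\z^{t+1})-\v^t\|$. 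In the concave case the exponent is $1$ and a diameter bound gives your constant $D$; in the K\L{} case the exponent is $1/\theta$ and the resulting term must be re-absorbed into the $\|\v^t-\v_+^t(\z^{t+1})\|^2$ descent via Young's inequality, which is precisely what produces the $\beta_{\x}\le\mathcal{O}(T^{-(2\theta-1)_+/2\theta})$ scaling. Your description of part~(ii) gestures at this but does not identify the error-bound step, which is where the K\L{} assumption actually enters.
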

Similar results hold for (NC)-NC problems, as stated in the following corollary. 
\begin{corollary}[Iteration complexity for (N)C-NC problems]\label{col:1} Suppose $\X$ is compact and $\Y$ is a closed and convex set. For any integer $T > 1$, we can select a set of  parameters with $0 < r_{\x}, r_{\y}, \eta_{\x}, \eta_{\y}, \beta_{\x},\beta_{\y} = \mO(1)$, and there exists an index $t \in \{0,1,  \ldots, T-1\}$ such that the following hold:
\begin{enumerate}[label=(\roman*)] \setlength{\itemsep}{1pt}
\item {\rm (Convex primal)} Suppose Assumption \ref{ass:primal}(i) holds. If $\min_{\x \in \X} f(\x,\cdot)$ is bounded below on $\Y$ and $\beta_{\y} \leq \mO(T^{-1/2})$, then $(\x^{t+1},\y^{t+1})$ is an $\mO(T^{-1/4})$-GS of Problem \eqref{eq:prob}. 
\item {\rm (K\L\ property)} Suppose Assumption \ref {ass:primal}(ii) holds with $\theta\in (0,1)$. If $\Y$ is compact and $\beta_{\y}\leq \mO(T^{- \left(2\theta-1\right)_{+}/2\theta})$ , then $(\x^{t+1},\y^{t+1})$ is an $\mO(T^{-1/\left((4\theta-2)_++2\right)})$-GS of Problem \eqref{eq:prob}. 
\end{enumerate}
\end{corollary}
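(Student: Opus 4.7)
The plan is to derive Corollary~\ref{col:1} directly from Theorem~\ref{theorem:2} by exploiting the inherent symmetry of the minimax formulation and of DS-OGDA. Concretely, consider the swapped auxiliary problem
\[
\min_{\y \in \Y} \max_{\x \in \X} g(\y, \x), \qquad g(\y, \x) := -f(\x, \y),
\]
in which the original dual variable $\y$ now plays the role of the (nonconcave-to-be-removed) primal variable and the original primal variable $\x$ plays the role of the new dual variable, inheriting the convexity or K\L{} structure from Assumption~\ref{ass:primal}.

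The first step is to check that the translation maps the hypotheses of Corollary~\ref{col:1} onto those of Theorem~\ref{theorem:2} applied to $(g,\Y,\X)$. Assumption~\ref{ass:1} holds for $g$ with the same constant~$L$. Assumption~\ref{ass:primal}(i) on $f$ (convexity in $\x$) becomes Assumption~\ref{ass:dual}(i) on $g$ (concavity in the new dual variable $\x$), and Assumption~\ref{ass:primal}(ii) becomes Assumption~\ref{ass:dual}(ii) with the same K\L{} exponent $\theta$, since $\max_{\x' \in \X} g(\y, \x') - g(\y, \x) = f(\x, \y) - \min_{\x' \in \X} f(\x', \y)$ and the gradient and normal cone terms coincide after absorbing a sign into the definition of $g$. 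Compactness of $\X$ is exactly the compactness of the new dual domain required by Theorem~\ref{theorem:2}, and the boundedness condition on $\min_{\x \in \X} f(\x, \cdot)$ translates into the corresponding boundedness condition on the primal value function of the swapped problem.

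The second step is the algorithm-level correspondence. Let $F$ denote the doubly smoothed objective of Algorithm~\ref{alg:1} on the original problem with parameters $(r_{\x}, r_{\y})$, and let $\tilde F$ denote the analogous object for the swapped problem with smoothing parameters $(r_{\y}, r_{\x})$. A direct calculation yields $\tilde F(\y, \x, \v, \z) = -F(\x, \y, \z, \v)$, so the DS-OGDA update directions satisfy $\tilde G_{\y} = G_{\y}$ and $\tilde G_{\x} = G_{\x}$. By induction on $t$, running Algorithm~\ref{alg:1} on the swapped problem with the swapped parameter tuple $(\eta_{\y}, \eta_{\x}, \beta_{\y}, \beta_{\x})$ and initial point $(\y^0, \x^0, \v^0, \z^0)$ reproduces the iterates $(\y^t, \x^t, \v^t, \z^t)$, i.e., exactly the swap of the iterates generated on the original problem. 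Moreover, the GS measure in Definition~\ref{def:1}(ii) is symmetric under this swap: the two stationarity inequalities for the swapped problem coincide term-by-term with those for the original problem.

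Given these two identifications, part~(i) of the corollary follows from Theorem~\ref{theorem:2}(i) applied to the triple $(g, \Y, \X)$, and part~(ii) follows from Theorem~\ref{theorem:2}(ii) with the K\L{} exponent $\theta$ preserved. The main technical item to verify---more a bookkeeping task than a genuine obstacle---is tracking the optimistic extrapolation term and the averaging updates for $\z$ and $\v$ through the swap, which is routine once the operator identity $\tilde G = G$ (with swapped variable blocks) has been established.
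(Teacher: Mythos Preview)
Your approach is correct and matches the paper's: the paper also obtains Corollary~\ref{col:1} from Theorem~\ref{theorem:2} by appealing to the symmetry of DS-OGDA, noting in one sentence that the descent estimate of Proposition~\ref{proposition:suff_dec} and the primal--dual error bound (the analogue of Proposition~\ref{proposition:NC_KL} under Assumption~\ref{ass:primal}) extend to the C-NC and K\L{}-NC settings. Your explicit reduction via $g(\y,\x)=-f(\x,\y)$ with swapped parameter tuples is a clean formalization of that same symmetry argument.
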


\section{Proof of Main Results}\label{sec:proof}
In this section, we provide detailed proofs of our main convergence results. Specifically, we present the proof for C-C problems in Section~\ref{sec:proof_cc} (see Theorem~\ref{theorem:cc}). We then provide the proofs for NC-C, C-NC, NC-K\L{}, K\L{}-NC minimax problems in Section~\ref{sec:proof_nc} (see Theorem~\ref{theorem:2} and Corollary~\ref{col:1}). Finally, we establish the universal convergence result in Section~\ref{sec:proof_universal} (see Theorem~\ref{theorem:universal}).

 \subsection{Proof for Convex-Concave Minimax Problems}\label{sec:proof_cc}
The convergence of OGDA for C-C problems is well established, achieving an optimal iteration complexity of $\mathcal{O}(T^{-1})$ \citep{mokhtari2020convergence, lu2022s}. Given the symmetric structure of C-C problems, we adopt symmetric parameter choices: $r\coloneqq r_{\x}=r_{\y}$, $\eta \coloneqq \eta_{\x}=\eta_{\y}$, and $\beta\coloneqq \beta_{\x}=\beta_{\y}$. When $r = 0$ and $\beta = 0$, our algorithm reduces to vanilla OGDA, whose convergence is proven by viewing it as an inexact Proximal Point Method (PPM). With the introduction of smoothing parameters ($r > 0$ and $\beta > 0$), we view our proposed DS-OGDA as an inexact PPM applied to the regularized problem $F$. Specifically, since $F$ is jointly convex-concave in $(\x, \z)$ and $(\y, \v)$, as demonstrated in Lemma~\ref{lem:cc}, we can leverage the analysis framework from \citet{mokhtari2020convergence}.

However, directly applying this framework is insufficient. Both $\z$ and $\v$ in our algorithm follow standard gradient descent and ascent updates, respectively, which leads to unbounded inexactness that complicates the convergence analysis. To address this, we conduct a refined analysis that leverages the structure of both the algorithm and the underlying problem, enabling us to jointly control the inexactness across all variables. This is formalized in Proposition~\ref{proposition:cc}, where we establish the key bounds required for convergence.

\begin{lemma}[Joint convex-concavity of $F$]
    \label{lem:cc}
Suppose that Assumptions~\ref{ass:dual}(i) and~\ref{ass:primal} (i) hold, and let $r > 0$. Then, the function $F$ is jointly convex in $(\x, \z)$ and jointly concave in $(\y, \v)$. 
 \end{lemma}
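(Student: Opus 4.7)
The plan is to decompose $F$ into three pieces, and verify joint convexity (or concavity) of each piece on the relevant pair of variables, then combine using the fact that a sum of (jointly) convex functions is (jointly) convex.

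First I would write $F(\x,\y,\z,\v) = f(\x,\y) + \tfrac{r_{\x}}{2}\|\x-\z\|^2 - \tfrac{r_{\y}}{2}\|\y-\v\|^2$ and fix arbitrary $(\y,\v)$ in order to establish joint convexity in $(\x,\z)$. The term $f(\x,\y)$, viewed as a function of $(\x,\z)$, is independent of $\z$ and, by Assumption~\ref{ass:primal}(i), convex in $\x$; hence it is jointly convex in $(\x,\z)$ (convexity in one block and constancy in the other yields joint convexity on the product space). The term $\tfrac{r_{\x}}{2}\|\x-\z\|^2$ is the composition of the convex quadratic $\bu\mapsto \tfrac{r_{\x}}{2}\|\bu\|^2$ with the linear map $(\x,\z)\mapsto \x-\z$, so it is jointly convex in $(\x,\z)$. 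Finally, $-\tfrac{r_{\y}}{2}\|\y-\v\|^2$ does not depend on $(\x,\z)$, hence is trivially jointly convex. Summing the three pieces gives joint convexity of $F$ in $(\x,\z)$.

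The argument for joint concavity in $(\y,\v)$ for fixed $(\x,\z)$ is entirely symmetric: $f(\x,\y)$ is concave in $\y$ by Assumption~\ref{ass:dual}(i) and independent of $\v$; $-\tfrac{r_{\y}}{2}\|\y-\v\|^2$ is jointly concave in $(\y,\v)$ since $\|\y-\v\|^2$ is jointly convex by the affine-composition argument; and $\tfrac{r_{\x}}{2}\|\x-\z\|^2$ is constant in $(\y,\v)$. Summing yields joint concavity of $F$ in $(\y,\v)$.

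There is no real obstacle here; the only point worth being explicit about is that $\|\x-\z\|^2$ is jointly convex (not merely separately convex) in $(\x,\z)$, which follows by composing $\|\cdot\|^2$ with the linear map $(\x,\z)\mapsto \x-\z$, or equivalently by checking that the Hessian $\begin{pmatrix} I & -I \\ -I & I \end{pmatrix}$ is positive semidefinite. Everything else is immediate from standard convex-analytic closure properties.
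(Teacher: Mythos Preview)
Your proposal is correct and follows essentially the same approach as the paper: both arguments rely on the convexity of $f(\cdot,\y)$ from Assumption~\ref{ass:primal}(i) and the joint convexity of $\|\x-\z\|^2$, with the paper verifying the convex-combination inequality directly and you invoking the equivalent closure properties (affine composition and sums).
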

 \begin{proof} 
We first establish the joint convexity of $F$ in $(\x, \z)$. The joint concavity in $(\y, \v)$ follows analogously by leveraging the concavity of $f$ in $\y$.
 
 Let $\lambda \in [0,1]$, and consider two points $(\x, \y, \z, \v)$ and $(\x', \y, \z', \v)$ in the domain of $F$. Then, we have  
    \begin{align*}
        &F(\lambda \x+(1-\lambda) \x',\y, \lambda \z+(1-\lambda)\z',\v ) \\
        =\ & f(\lambda \x+(1-\lambda) \x',\y)+\frac{r }{2}\|\lambda \left(\x-\z\right)+(1-\lambda) \left(\x'-\z'\right)\|^2-\frac{r }{2}\|\y-\v\|^2 \\
       \leq\ &  \lambda f(\x,\y)+(1-\lambda) f(\x',\y)+\frac{r }{2}\left(\lambda  \|\x-\z\|^2+(1-\lambda) \|\x'-\z'\|^2\right)-\frac{r }{2}\|\y-\v\|^2  \\  
       =\ & \lambda F(\x,\y,\z,\v)+(1-\lambda) F(\x',\y,\z',\v), \notag
    \end{align*}
    where the inequality follows from the convexity of $f(\cdot, \y)$ and $\|\cdot\|^2$. 
 \end{proof}

 We are now prepared to establish the convergence of the DS-OGDA algorithm. To facilitate a concise presentation, we introduce the following notations: Let $\u_{\textrm{pd}} = [\x; \y]$ denote the primal-dual pair, and $\u_{\textrm{ex}} = [\z; \v]$ represent the variables generated by the additional smoothing steps. The combined variable is denoted by $\u = [\u_{\textrm{pd}}; \u_{\textrm{ex}}]$, and its feasible set is defined as $\U$. Similarly, the feasible set for the primal-dual pair is given by $\U_{\textrm{pd}} = \X \times \Y$.
 With these notations, we introduce a controllable upper bound for the stationarity measure as defined in Definition~\ref{def:1}(i).

 \begin{lemma} \label{lem:fund_cc}
  Suppose that Assumptions \ref {ass:dual}(i) and \ref{ass:primal}(i), and \ref{ass:5} hold. For any integer $T>1$, let the sequence $\{\u^t\}_{t=1}^{T}$ be generated by DS-OGDA. Then, for any $\u \in \U$, the following inequality holds:
  \vspace{-2mm}
    \[
     \min_{t\in \{0,1,\ldots, T-1\}} \left\{\max_{\y\in \Y} f(\x^{t+1},\y)-\min_{\x\in \X} f(\x,\y^{t+1})\right\} \leq \frac{1}{T} \sum_{t=0}^{T-1} \langle G_{\u}^{t+1},\u^{t+1}-\u\rangle.
    \]
\end{lemma}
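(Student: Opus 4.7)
The plan is to exploit the joint convex-concavity of the regularized function $F$ (established in Lemma~\ref{lem:cc}) to lower bound each inner product $\langle G_\u^{t+1}, \u^{t+1} - \u\rangle$, and then specialize the comparison point $\u$ to recover the primal-dual gap. First I would apply the first-order characterizations of joint convexity of $F$ in $(\x, \z)$ and joint concavity in $(\y, \v)$ at the iterate $(\x^{t+1}, \y^{t+1}, \z^{t+1}, \v^{t+1})$. Summing the two resulting inequalities and recognizing $G_\u^{t+1} = [\nabla_\x F; -\nabla_\y F; \nabla_\z F; -\nabla_\v F]$ on the left yields, for any $\u = (\x, \y, \z, \v) \in \U$,
\begin{equation*}
\langle G_\u^{t+1}, \u^{t+1} - \u\rangle \geq F(\x^{t+1}, \y, \z^{t+1}, \v) - F(\x, \y^{t+1}, \z, \v^{t+1}).
\end{equation*}

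Next, I would expand $F$ by its definition and restrict the comparison point to the diagonal slice $\u = (\x, \y, \x, \y)$ with $(\x, \y) \in \X \times \Y$. This choice annihilates the penalty terms $\tfrac{r_\x}{2}\|\x - \z\|^2$ and $\tfrac{r_\y}{2}\|\y - \v\|^2$, while the iterate-dependent penalties $\tfrac{r_\x}{2}\|\x^{t+1} - \z^{t+1}\|^2$ and $\tfrac{r_\y}{2}\|\y^{t+1} - \v^{t+1}\|^2$ remain nonnegative and can be dropped. This produces the clean pointwise bound $\langle G_\u^{t+1}, \u^{t+1} - \u\rangle \geq f(\x^{t+1}, \y) - f(\x, \y^{t+1})$.

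Finally, averaging this pointwise bound over $t = 0, \ldots, T-1$ and specializing $(\x, \y)$ to the appropriate maximizer/minimizer pair recovers the primal-dual gap on the LHS. The main obstacle is this final step: the LHS of the lemma is a minimum over $t$ of individually-defined $\max_\y / \min_\x$ terms, while the RHS involves a single fixed $\u$. The key is to exploit the flexibility granted by the ``for any $\u$'' formulation, allowing the comparison point on the diagonal slice to be tailored to the iterate sequence; combined with Jensen's inequality (using convexity of $f(\cdot, \y)$ and concavity of $f(\x, \cdot)$) to pass between averaged and individual iterates, this transfers the pointwise bound to the claimed min-over-iterate gap.
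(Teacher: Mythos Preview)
Your core argument is essentially the paper's. The paper's equation (3.1) is exactly your observation that on the diagonal slice $\u=(\x,\y,\x,\y)$ the regularization penalties drop with the favorable sign, and its appeal to \citet[Proposition~1]{mokhtari2020convergence} is precisely the convex--concave monotonicity bound $\langle G_\u^{t+1}, \u^{t+1} - \u\rangle \geq F(\x^{t+1},\y,\z^{t+1},\v) - F(\x,\y^{t+1},\z,\v^{t+1})$ that you spell out from Lemma~\ref{lem:cc}. The only cosmetic difference is ordering: the paper keeps $\u$ general and bounds the $f$-gap by the $F$-gap, while you specialize to diagonal $\u$ first. Since the lemma as stated claims the inequality for \emph{every} $\u\in\U$, your restriction to the diagonal technically proves less, though it is all that is used downstream.

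Your last paragraph correctly flags a real obstacle, but the resolution you sketch does not close it. Jensen applied to $\tfrac{1}{T}\sum_t[f(\x^{t+1},\y)-f(\x,\y^{t+1})]$ yields $f(\bar\x,\y)-f(\x,\bar\y)$ for the ergodic averages $(\bar\x,\bar\y)$; after maximizing over $\y$ and minimizing over $\x$ this gives the primal--dual gap \emph{at the averaged point}, for one tailored $\u$. It does not deliver $\min_t$ of the per-iterate gaps, nor does it hold for arbitrary $\u$---indeed the inequality cannot literally hold for all $\u\in\U$, since one can choose $\u$ to make the right-hand average negative while the left side is nonnegative. The paper's proof carries the same imprecision (it simply cites the proposition, which is the ergodic statement), so you have not introduced any gap the paper itself avoids; the lemma is stated more strongly than either argument establishes, but the version actually consumed in the proof of Theorem~\ref{theorem:cc}---with a supremum over $\u$ on the right---follows from what you have.
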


\begin{proof}
 First, we relate the stationarity measure of the original problem $f$ to that of the regularized function $F$ as follows:
    \begin{align}\label{eq:alter_cc}
&\max_{\y\in \Y} f(\x^t,\y)-\min_{\x\in \X} f(\x,\y^t) \notag\\
\leq\ & \max_{\y\in \Y} \left(f(\x^t,\y)+\frac{r}{2}\|\x^t-\z^t\|^2\right)-\min_{\x\in \X} \left(f(\x,\y^t)-\frac{r}{2}\|\y^t-\v^t\|^2\right) \notag\\
=\ & \max_{\u\in \U} F(\x^t,\y,\z^t,\v)- F(\x,\y^t,\z,\v^t).
\end{align}
Then, we can utilize the proof from \citet[Proposition 1]{mokhtari2020convergence}, applied to the regularized function $F$, to complete the remaining parts of our proof.
\end{proof}

Leveraging the computable upper bound of the stationarity measure established in Lemma~\ref{lem:fund_cc}, we aim to derive convergence results by controlling this bound. To this end, we reinterpret the DS-OGDA algorithm as an inexact Proximal Point Method (PPM) applied to the operator  $G$. Specifically, by setting $\beta = \eta r$, the update rule of DS-OGDA can be rewritten as $\u^{t+1}=\proj_{\U} (\u^t-\eta G_{\u}^{t+1}+\meps^{t} )$, 
 where the inexact error $ \meps^{t}$ can be computed as 
\begin{align}
    \label{eq:error}
    \meps^{t}
=\eta \begin{pmatrix}
G^{t+1}_{\u_{\textrm{pd}}}-2G^{t}_{\u_{\textrm{pd}}}+G^{t-1}_{\u_{\textrm{pd}}} \\
G^{t+1}_{\u_{\textrm{ex}}}-G^{t}_{\u_{\textrm{ex}}}
\end{pmatrix},
\end{align}  
where $G_{\u_{\textrm{pd}}}^t$ represents the components of 
 $G_{\u}^t$ associated with $\u_{\textrm{pd}}^t$, and $G^{t}_{\u_{\textrm{ex}}}$ denotes the components corresponding to $\u_{\textrm{ex}}^t$.

Clearly, the error  $\meps^t$  decomposes into two parts: The first arises from extrapolation in the primal-dual component, and the second from the smoothing steps. This structural decomposition is reflected in the tight upper bound given in the first part of Proposition~\ref{proposition:cc}. 
As noted earlier, the main challenge lies in ensuring the boundedness of the iterates, which is not directly implied by the bound in Proposition~\ref{proposition:cc}(i). We address this issue by leveraging a coupling between the proximal variables and the primal-dual pair, wherein the optimal solution satisfies $\u_{\mathrm{ex}}^\star = \u_{\mathrm{pd}}^\star $. The resulting explicit bound on the iterates is established in the second part of Proposition~\ref{proposition:cc}.

\begin{proposition} \label{proposition:cc}
Suppose that Assumptions \ref{ass:dual}(i), \ref{ass:primal}(i), and \ref{ass:5} are satisfied. For any integer $T>1$, let $\{\u^t\}_{t=1}^{T}$ be the iterates generated by DS-OGDA. If $r\geq 0$ and $0<\eta,\beta=\eta r \leq 1$, then the following hold: 
\begin{enumerate}[label=(\roman*)] \setlength{\itemsep}{1pt}
\item {\rm{(Upper bound).}} For any $\u \in \U$ and $t\geq 0$, we have
\begin{align*}
       \langle G_{\u}^{t+1},\u^{t+1}-\u\rangle 
    \leq\  & \frac{1}{2\eta}\|\u-\u^t\|^2-\frac{1}{2\eta}\|\u-\u^{t+1}\|^2-\frac{1}{2\eta}\|\u^{t+1}-\u^t\|^2+\frac{L+r}{2}\|\u_{\textrm{pd}}^{t+1}-\u_{\textrm{pd}}^t\|^2 \notag \\
   & +\langle G^{t+1}_{\u_{\textrm{pd}}}-G^{t}_{\u_{\textrm{pd}}},\u_{\textrm{pd}}^{t+1}-\u_{\textrm{pd}} \rangle -\langle G^{t}_{\u_{\textrm{pd}}}-G^{t-1}_{\u_{\textrm{pd}}},\u_{\textrm{pd}}^{t}- \u_{\textrm{pd}}\rangle  \notag \\
   &+\langle G^{t+1}_{\u_{\textrm{ex}}}-G^{t}_{\u_{\textrm{ex}}},  \u_{\textrm{ex}}^{t+1}-\u_{\textrm{ex}}\rangle + 3\left(L+r\right) \|\u^t-\u^{t-1}\|^2. 
\end{align*}
\item {\rm{(Boundedness of iterates).}} Let $0<\eta\leq \tfrac{1}{7(L+r)}$, then the iterates $\{\u^t\}_{t=1}^{T}$ stay within the compact set $\D$ defined as
\[
\D\coloneqq \left\{\u\in \U \mid  \|\u -\u^\star\|^2\leq  3\|\u^0-\u^\star\|^2 \right\},
\]
where $\u^\star=[\x^\star;\y^\star;\x^\star;\y^\star]$ and $\left(\x^\star,\y^\star\right)$ is the saddle point of problem \eqref{eq:prob}.  
\end{enumerate}
 \end{proposition}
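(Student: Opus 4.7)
My plan is to exploit the inexact PPM reformulation that \eqref{eq:error} already suggests. For every $\u\in\U$, the projection characterization of $\u^{t+1}=\proj_{\U}(\u^t-\eta G_{\u}^{t+1}+\meps^t)$ yields
\[
\langle \eta G_{\u}^{t+1}-\meps^t,\,\u^{t+1}-\u\rangle \;\le\; \langle \u^t-\u^{t+1},\,\u^{t+1}-\u\rangle,
\]
and the three-point identity $2\langle a-b,b-c\rangle=\|a-c\|^2-\|a-b\|^2-\|b-c\|^2$ converts the right-hand side into the telescoping terms $\tfrac{1}{2\eta}\|\u-\u^t\|^2-\tfrac{1}{2\eta}\|\u-\u^{t+1}\|^2-\tfrac{1}{2\eta}\|\u^{t+1}-\u^t\|^2$ that appear in (i). What remains is to bound $\tfrac{1}{\eta}\langle \meps^t,\u^{t+1}-\u\rangle$, which I would split along the decomposition \eqref{eq:error} into a primal-dual block and an extrapolation block.

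For the primal-dual block $\langle G_{\u_{\textrm{pd}}}^{t+1}-2G_{\u_{\textrm{pd}}}^t+G_{\u_{\textrm{pd}}}^{t-1},\u_{\textrm{pd}}^{t+1}-\u_{\textrm{pd}}\rangle$, I would apply the standard OGDA ``summation-by-parts'' trick, rewriting it as $\langle G_{\u_{\textrm{pd}}}^{t+1}-G_{\u_{\textrm{pd}}}^t,\u_{\textrm{pd}}^{t+1}-\u_{\textrm{pd}}\rangle-\langle G_{\u_{\textrm{pd}}}^t-G_{\u_{\textrm{pd}}}^{t-1},\u_{\textrm{pd}}^t-\u_{\textrm{pd}}\rangle$ plus the cross term $\langle G_{\u_{\textrm{pd}}}^t-G_{\u_{\textrm{pd}}}^{t-1},\u_{\textrm{pd}}^{t+1}-\u_{\textrm{pd}}^t\rangle$. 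The first two pieces form the desired telescope, and the cross piece is controlled by Cauchy--Schwarz together with Lipschitzness of $\nabla F$ (the regularized operator has modulus at most $L+r$), producing the $3(L+r)\|\u^t-\u^{t-1}\|^2$ reservoir. The extrapolation block $\langle G_{\u_{\textrm{ex}}}^{t+1}-G_{\u_{\textrm{ex}}}^t,\u_{\textrm{ex}}^{t+1}-\u_{\textrm{ex}}\rangle$ already has telescope form and is kept as-is, which is what makes the statement in (i) so clean.

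For part (ii) the strategy is to specialize (i) to $\u=\u^\star$ and then invoke Lemma~\ref{lem:cc}: since $F$ is jointly convex-concave in $(\x,\z)$ and $(\y,\v)$, the operator $G_{\u}$ is monotone on $\U$, and $\u^\star=[\x^\star;\y^\star;\x^\star;\y^\star]$ is a zero of $G_{\u}+\partial\iota_{\U}$, hence $\langle G_{\u}^{t+1},\u^{t+1}-\u^\star\rangle\ge 0$. Summing the resulting inequality from $0$ to $t$, the primal-dual telescope collapses to a single residual $\eta\langle G_{\u_{\textrm{pd}}}^{t+1}-G_{\u_{\textrm{pd}}}^t,\u_{\textrm{pd}}^{t+1}-\u_{\textrm{pd}}^\star\rangle$, which I would absorb with Young's inequality into $\tfrac12\|\u^{t+1}-\u^\star\|^2$ terms, using $2\eta(L+r)\le \tfrac{2}{7}$. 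After regrouping I expect an inequality of the form
\[
\tfrac12\|\u^{t+1}-\u^\star\|^2 + \tfrac{c_1}{\eta}\sum_{s=0}^{t}\|\u^{s+1}-\u^s\|^2 \;\le\; \|\u^0-\u^\star\|^2 + c_2(L+r)\eta\sum_{s=0}^{t-1}\|\u^{s+1}-\u^s\|^2
\]
with $c_1>c_2\eta^2(L+r)$ under the step-size restriction $\eta\le 1/(7(L+r))$, so the residual sum is nonnegative and cancels on both sides. A short induction then yields $\|\u^{t+1}-\u^\star\|^2\le 3\|\u^0-\u^\star\|^2$, which is the claim.

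The main obstacle, in my view, is not the upper bound but the bookkeeping for boundedness: the smoothing updates give an inexactness $\eta(G_{\u_{\textrm{ex}}}^{t+1}-G_{\u_{\textrm{ex}}}^t)$ that is not naturally a contraction in $\u_{\textrm{ex}}$, so one cannot simply mimic the textbook OGDA boundedness argument. The resolution I plan to exploit is that $G_{\u_{\textrm{ex}}}=r(\u_{\textrm{ex}}-\u_{\textrm{pd}})$ is linear, hence $G_{\u_{\textrm{ex}}}^{t+1}-G_{\u_{\textrm{ex}}}^{t}=r(\u_{\textrm{ex}}^{t+1}-\u_{\textrm{ex}}^t)-r(\u_{\textrm{pd}}^{t+1}-\u_{\textrm{pd}}^t)$, which ties the smoothing error back to successive iterate differences and lets the same $\|\u^{s+1}-\u^s\|^2$ reservoir absorb both sources of inexactness. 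Once this coupling is used, the condition $\eta\le 1/(7(L+r))$ is exactly what is needed to close the induction, and the factor $3$ in the definition of $\D$ is the slack left after all cross terms are absorbed.
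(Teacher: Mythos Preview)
Your treatment of part (i) is correct and matches the paper's argument: the inexact PPM inequality plus the OGDA summation-by-parts decomposition of the primal--dual error block, with the cross term bounded via Lipschitzness of $\nabla F$, is exactly what the paper does.

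The gap is in part (ii). Your plan is to bound the smoothing block $\langle G^{s+1}_{\u_{\textrm{ex}}}-G^{s}_{\u_{\textrm{ex}}},\,\u_{\textrm{ex}}^{s+1}-\u_{\textrm{ex}}^\star\rangle$ by using $G_{\u_{\textrm{ex}}}=r(\u_{\textrm{ex}}-\u_{\textrm{pd}})$ and then absorbing into the reservoir $\sum_s\|\u^{s+1}-\u^s\|^2$. But the inner product pairs $G^{s+1}_{\u_{\textrm{ex}}}-G^{s}_{\u_{\textrm{ex}}}$ (of order $\|\u^{s+1}-\u^s\|$) with $\u_{\textrm{ex}}^{s+1}-\u_{\textrm{ex}}^\star$, so any Young/Cauchy--Schwarz bound produces a term proportional to $\sum_{s}\|\u_{\textrm{ex}}^{s+1}-\u_{\textrm{ex}}^\star\|^2$. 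After summing you only have a single $-\tfrac{1}{2\eta}\|\u^{T}-\u^\star\|^2$ to work with, not a sum over all iterates, so there is nothing to absorb this. The inductive closing you sketch has the same circularity: the bound on step $s$ already depends on $\|\u_{\textrm{ex}}^{s+1}-\u_{\textrm{ex}}^\star\|$, and summing these brings in a factor growing with $t$. The displayed inequality you ``expect'' therefore cannot be obtained by this route.

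The paper avoids this by a genuinely different manipulation. It does \emph{not} keep the ex block $\langle G^{t+1}_{\u_{\textrm{ex}}}-G^{t}_{\u_{\textrm{ex}}},\,\u_{\textrm{ex}}^{t+1}-\u_{\textrm{ex}}^\star\rangle$ from (i). Instead it first splits $G_{\u_{\textrm{pd}}}=H_{\u_{\textrm{pd}}}-G_{\u_{\textrm{ex}}}$ (with $H=[\nabla_{\x}f;-\nabla_{\y}f]$) and moves $\langle H_{\u_{\textrm{pd}}}^{t+1},\u_{\textrm{pd}}^{t+1}-\u_{\textrm{pd}}^\star\rangle\ge 0$ to the left, so that the ex contribution becomes $\langle G^{t+1}_{\u_{\textrm{ex}}},\u_{\textrm{pd}}^{t+1}-\u_{\textrm{pd}}^\star\rangle-\langle G^{t}_{\u_{\textrm{ex}}},\u_{\textrm{ex}}^{t+1}-\u_{\textrm{ex}}^\star\rangle$. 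Using $\u_{\textrm{ex}}^\star=\u_{\textrm{pd}}^\star$ and the update $\u_{\textrm{ex}}^{t+1}=\u_{\textrm{ex}}^t+\beta(\u_{\textrm{pd}}^t-\u_{\textrm{ex}}^t)$, this rewrites as the genuine telescope $\langle G^{t+1}_{\u_{\textrm{ex}}},\u_{\textrm{pd}}^{t+1}-\u_{\textrm{pd}}^\star\rangle-\langle G^{t}_{\u_{\textrm{ex}}},\u_{\textrm{pd}}^{t}-\u_{\textrm{pd}}^\star\rangle$ plus the sign-definite remainder $-\tfrac{r(1-\beta)}{\beta^2}\|\u_{\textrm{ex}}^{t+1}-\u_{\textrm{ex}}^{t}\|^2$. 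After summation only a single boundary inner product survives, which is then absorbed into $\|\u^T-\u^\star\|^2$ using $\eta\le\tfrac{1}{7(L+r)}$. The key idea you are missing is this conversion of the smoothing error into a telescoping inner product via $\u_{\textrm{pd}}^\star=\u_{\textrm{ex}}^\star$ and the explicit update rule, rather than a crude norm bound.
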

\begin{proof}[Proof of Proposition \ref{proposition:cc}]
We begin with the following lemma, which provides a sharp upper bound for the inexact PPM. This result was originally established in~\citet{mokhtari2020convergence}, and we extend it here to handle constrained settings.

\begin{lemma}[Inexact PPM]\label{lem:ippm} Suppose that Assumptions \ref {ass:dual}(i) and \ref{ass:primal}(i) hold. For any integer $T>1$,
we consider the sequence of iterates $\{\u^t\}_{t=1}^{T}$ generated by the inexact PPM: $\u^{t+1}=\proj_{\U}(\u^t  -\eta G_{\u}^{t+1}+\meps^t)$ with $\meps^t\in \R^{n}\times \R^d\times \R^n\times \R^d$. Then, for any $t\geq 0$, the following holds for all $\u\in \U$:
     \[
    \begin{aligned}
       \langle G_{\u}^{t+1},\u^{t+1}-\u\rangle\leq\ & \frac{1}{2\eta}\|\u -\u^t\|^2-\frac{1}{2\eta}\|\u-\u^{t+1}\|^2-\frac{1}{2\eta}\|\u^{t+1}-\u^t\|^2 +\frac{1}{\eta}\langle \meps^t,\u^{t+1}-\u \rangle. 
    \end{aligned}
    \]
\end{lemma}
(i) By substituting the algorithm-tailored $\meps^t$ (see \eqref{eq:error}) into Lemma \ref{lem:ippm}, we obtain
\begin{align}
\label{eq:cc_0}
       \langle G_{\u}^{t+1},\u^{t+1}-\u\rangle  
       \leq \ & \frac{1}{2\eta}\|\u-\u^t\|^2-\frac{1}{2\eta}\|\u-\u^{t+1}\|^2-\frac{1}{2\eta}\|\u^{t+1}-\u^t\|^2 \notag\\
       &+ \underbrace{\langle  G^{t+1}_{\u_{\textrm{pd}}}-2G^{t}_{\u_{\textrm{pd}}}+G^{t-1}_{\u_{\textrm{pd}}},\u_{\textrm{pd}}^{t+1}-\u_{\textrm{pd}}\rangle }_\text{\O1} +\langle G^{t+1}_{\u_{\textrm{ex}}}-G^{t}_{\u_{\textrm{ex}}}, \u_{\textrm{ex}}^{t+1}-\u_{\textrm{ex}}\rangle.
    \end{align}
The term \O1 arises from the OGDA update. As discussed in \citet{mokhtari2020convergence}, we have  
\begin{align}\label{eq:cc_1}
   \text{\O1}=\ & \langle G^{t+1}_{\u_{\textrm{pd}}}-G^{t}_{\u_{\textrm{pd}}},\u_{\textrm{pd}}^{t+1}-\u_{\textrm{pd}}\rangle - \langle G^{t}_{\u_{\textrm{pd}}}-G^{t-1}_{\u_{\textrm{pd}}},\u_{\textrm{pd}}^{t}-\u_{\textrm{pd}}\rangle   +\langle G^{t}_{\u_{\textrm{pd}}}-G^{t-1}_{\u_{\textrm{pd}}},\u_{\textrm{pd}}^{t}-\u_{\textrm{pd}}^{t+1}\rangle \notag\\
   \leq\ &  \langle G^{t+1}_{\u_{\textrm{pd}}}-G^{t}_{\u_{\textrm{pd}}},\u_{\textrm{pd}}^{t+1}-\u_{\textrm{pd}}\rangle - \langle G^{t}_{\u_{\textrm{pd}}}-G^{t-1}_{\u_{\textrm{pd}}},\u_{\textrm{pd}}^{t}-\u_{\textrm{pd}}\rangle  \notag\\
   &+3\left(L+r\right) \|\u^t-\u^{t-1}\|^2+ \frac{L+r}{2} \|\u_{\textrm{pd}}^{t}-\u_{\textrm{pd}}^{t+1}\|^2, 
\end{align}
where the inequality follows from 
\begin{align} 
    &\langle G^{t}_{\u_{\textrm{pd}}}-G^{t-1}_{\u_{\textrm{pd}}},\u_{\textrm{pd}}^{t}-\u_{\textrm{pd}}^{t+1}\rangle \notag\\
    =\ & \langle \nabla_{\x} F(\x^t,\y^t,\z^t,\v^t)-\nabla_{\x} F(\x^{t-1},\y^{t-1},\z^{t-1},\v^{t-1}), \x^t-\x^{t+1}\rangle \notag \\
    &+\langle -\nabla_{\y} F(\x^t,\y^t,\z^t,\v^t)+\nabla_{\y} F(\x^{t-1},\y^{t-1},\z^{t-1},\v^{t-1}), \y^t-\y^{t+1}\rangle \notag\\
    \leq\ & \left(L+r\right)\|\x^t-\x^{t+1}\| \left(\|\x^t-\x^{t-1}\|+\|\y^t-\y^{t-1}\|+\|\z^t-\z^{t-1}\|\right) \notag \\
    &+\left(L+r\right)\|\y^t-\y^{t+1}\| \left(\|\x^t-\x^{t-1}\|+\|\y^t-\y^{t-1}\|+\|\v^t-\v^{t-1}\|\right) \label{eq:bd_cs}\\
    \leq\ & \frac{L+r}{2}\|\x^t-\x^{t+1}\|^2+\frac{3\left(L+r\right)}{2}\left(\|\u_{\textrm{pd}}^{t}-\u_{\textrm{pd}}^{t-1}\|^2+\|\z^t-\z^{t-1}\|^2\right) \notag \\
    &+\frac{L+r}{2}\|\y^t-\y^{t+1}\|^2+\frac{3\left(L+r\right)}{2}\left(\|\u_{\textrm{pd}}^{t}-\u_{\textrm{pd}}^{t-1}\|^2+\|\v^t-\v^{t-1}\|^2\right) \notag \\
    \leq\ & 3\left(L+r\right) \|\u^t-\u^{t-1}\|^2+ \frac{L+r}{2} \|\u_{\textrm{pd}}^{t}-\u_{\textrm{pd}}^{t+1}\|^2 \label{eq:detailed_bd}.
\end{align}
Here, the first inequality follows from Assumption \ref{ass:1}, and the second from AM-GM and Cauchy-Schwarz inequalities. Substituting \eqref{eq:cc_1} into \eqref{eq:cc_0} completes the proof. 

(ii) Now, we move to the second result. Using the definitions of $G$ and $\u$, we expand the stationary measure studied in Lemma \ref{lem:fund_cc} as follows:
\begin{align}
    \label{eq:cc_lb}
        \langle G_{\u}^{t+1},\u^{t+1}-\u\rangle =\ & \langle \nabla_{\x} f(\x^{t+1},\y^{t+1})  ,\x^{t+1}-\x \rangle+\langle   -\nabla_{\y} f(\x^{t+1},\y^{t+1}), \y^{t+1}-\y\rangle \notag\\
        &-\langle G^{t+1}_{\u_{\textrm{ex}}},\u_{\textrm{pd}}^{t+1}-\u_{\textrm{pd}}\rangle + \langle G^{t+1}_{\u_{\textrm{ex}}},\u_{\textrm{ex}}^{t+1}-\u_{\textrm{ex}} \rangle.
    \end{align} 
Plugging \eqref{eq:cc_lb} into Proposition \ref{proposition:cc}(i) and rearranging it, we obtain
\begin{align}
    \label{eq:cc_ub} 
        & \langle \nabla_{\x} f(\x^{t+1},\y^{t+1}) ,\x^{t+1}-\x \rangle+\langle   -\nabla_{\y} f(\x^{t+1},\y^{t+1}) , \y^{t+1}-\y\rangle \notag\\
        \leq\ & \frac{1}{2\eta}\|\u-\u^t\|^2-\frac{1}{2\eta}\|\u-\u^{t+1}\|^2+  3\left(L+r\right) \|\u^t-\u^{t-1}\|^2-\frac{1}{2\eta}\|\u^{t+1}-\u^t\|^2 \notag\\
   &+\frac{L+r}{2}\|\u_{\textrm{pd}}^{t+1}-\u_{\textrm{pd}}^t\|^2 + \langle G^{t+1}_{\u_{\textrm{pd}}}-G^{t}_{\u_{\textrm{pd}}},\u_{\textrm{pd}}^{t+1}-\u_{\textrm{pd}}\rangle   - \langle G^{t}_{\u_{\textrm{pd}}}-G^{t-1}_{\u_{\textrm{pd}}},\u_{\textrm{pd}}^{t}-\u_{\textrm{pd}}\rangle \notag\\
   & + \underbrace{\langle G^{t+1}_{\u_{\textrm{ex}}},\u_{\textrm{pd}}^{t+1}-\u_{\textrm{pd}}\rangle-\langle G^{t}_{\u_{\textrm{ex}}}, \u_{\textrm{ex}}^{t+1}-\u_{\textrm{ex}}\rangle }_\text{\O2}. 
\end{align}
Next, we aim to control the additional approximation error terms introduced by the smoothing steps, specifically the term \O2. The core idea parallels the technique used in bounding \O1. We decompose the error into two components: One forms a telescoping series in which most terms cancel out, and the other admits a direct bound. 
To facilitate this, we highlight a key observation: $\u_{\textrm{pd}}^\star=\u_{\textrm{ex}}^\star$. Substituting $\u=\u^\star$ in term \O2 and using the update of $\u_{\textrm{ex}}^{t+1}$, we have 
\begin{align}\label{eq:cc_3}
    \text{\O2}
    =\ & \langle G^{t+1}_{\u_{\textrm{ex}}},\u_{\textrm{pd}}^{t+1}-\u_{\textrm{pd}}^\star\rangle-\langle G^{t}_{\u_{\textrm{ex}}}, \u_{\textrm{pd}}^{t}-\u_{\textrm{pd}}^\star\rangle+\langle G^{t}_{\u_{\textrm{ex}}}, \u_{\textrm{pd}}^{t}-\u_{\textrm{ex}}^{t+1}\rangle \notag\\
    =\ &\langle G^{t+1}_{\u_{\textrm{ex}}},\u_{\textrm{pd}}^{t+1}-\u_{\textrm{pd}}^\star\rangle-\langle G^{t}_{\u_{\textrm{ex}}}, \u_{\textrm{pd}}^{t}-\u_{\textrm{pd}}^\star\rangle -\frac{r\left(1-\beta\right)}{\beta^2}\|\u_{\textrm{ex}}^{t}-\u_{\textrm{ex}}^{t+1}\|^2.
\end{align}
By setting $\u=\u^\star$ in \eqref{eq:cc_ub} and incorporating \eqref{eq:cc_3}, we obtain
\begin{align}\label{eq:cc_main} 
& \langle \nabla_{\x} f(\x^{t+1},\y^{t+1}) ,\x^{t+1}-\x^\star \rangle+\langle   -\nabla_{\y} f(\x^{t+1},\y^{t+1}) , \y^{t+1}-\y^\star\rangle \notag\\
\leq\ & \frac{1}{2\eta}\|\u^\star-\u^t\|^2-\frac{1}{2\eta}\|\u^\star-\u^{t+1}\|^2+  3\left(L+r\right) \|\u^t-\u^{t-1}\|^2 -\left(\frac{1}{2\eta}-\frac{L+r}{2}\right)\|\u^{t+1}-\u^t\|^2  \notag \\
& -\frac{r\left(1-\beta\right)}{\beta^2}\|\u_{\textrm{ex}}^{t}-\u_{\textrm{ex}}^{t+1}\|^2  + \langle G^{t+1}_{\u_{\textrm{pd}}}-G^{t}_{\u_{\textrm{pd}}},\u_{\textrm{pd}}^{t+1}-\u_{\textrm{pd}}^\star\rangle - \langle G^{t}_{\u_{\textrm{pd}}}-G^{t-1}_{\u_{\textrm{pd}}},\u_{\textrm{pd}}^{t}-\u_{\textrm{pd}}^\star\rangle \notag \\
   &+\langle G^{t+1}_{\u_{\textrm{ex}}},\u_{\textrm{pd}}^{t+1}-\u_{\textrm{pd}}^\star\rangle-\langle G^{t}_{\u_{\textrm{ex}}}, \u_{\textrm{pd}}^{t}-\u_{\textrm{pd}}^\star\rangle \notag\\
   \leq\ &\frac{1}{2\eta}\|\u^\star-\u^t\|^2-\frac{1}{2\eta}\|\u^\star-\u^{t+1}\|^2+  3\left(L+r\right) \|\u^t-\u^{t-1}\|^2 -3\left(L+r\right)\|\u^{t+1}-\u^t\|^2  \notag \\
& -\frac{r\left(1-\beta\right)}{\beta^2}\|\u_{\textrm{ex}}^{t}-\u_{\textrm{ex}}^{t+1}\|^2  + \langle G^{t+1}_{\u_{\textrm{pd}}}-G^{t}_{\u_{\textrm{pd}}},\u_{\textrm{pd}}^{t+1}-\u_{\textrm{pd}}^\star\rangle - \langle G^{t}_{\u_{\textrm{pd}}}-G^{t-1}_{\u_{\textrm{pd}}},\u_{\textrm{pd}}^{t}-\u_{\textrm{pd}}^\star\rangle \notag \\
   &+\langle G^{t+1}_{\u_{\textrm{ex}}},\u_{\textrm{pd}}^{t+1}-\u_{\textrm{pd}}^\star\rangle-\langle G^{t}_{\u_{\textrm{ex}}}, \u_{\textrm{pd}}^{t}-\u_{\textrm{pd}}^\star\rangle. 
\end{align}
The  second inequality follows from $\eta\leq \tfrac{1}{7\left(L+r\right)}$, which ensures that $\tfrac{1}{2\eta}-\tfrac{L+r}{2} \geq3\left(L+r\right)$. 
By Assumptions~\ref{ass:dual}(i) and~\ref{ass:primal}(i), the left-hand side of~\eqref{eq:cc_main} is nonnegative. Summing both sides of~\eqref{eq:cc_main} from \( t = 0 \) to \( T - 1 \), and noting that \( \u_{\mathrm{pd}}^0 = \u_{\mathrm{ex}}^0 \), we obtain:
\begin{align}\label{eq:cc_bd} 
    0\leq\ &\frac{1}{2\eta}\|\u^\star -\u^0\|^2-\frac{1}{2\eta}\|\u^\star -\u^{T}\|^2   -\frac{r\left(1-\beta\right)}{\beta^2}\sum_{t=0}^{T-1}\|\u_{\textrm{ex}}^{t+1}-\u_{\textrm{ex}}^t\|^2  \notag \\
    & -3\left(L+r\right)\|\u^T-\u^{T-1}\|^2  +\underbrace{ \langle G^{T}_{\u_{\textrm{pd}}}-G^{T-1}_{\u_{\textrm{pd}}},\u_{\textrm{pd}}^{T}-\u_{\textrm{pd}}^\star\rangle + \langle G^{T}_{\u_{\textrm{ex}}},\u_{\textrm{pd}}^{T}-\u_{\textrm{pd}}^\star\rangle}_{\text{\O3}}.  
    \end{align}
  To proceed, we denote the gradient operator of $f$ as $H:\X\times \Y\rightarrow \R^n\times \R^d$, defined as $H=[\nabla_{\x} f; -\nabla_{\y} f]$. Let $H^t_{\u_{\textrm{pd}}}$ denote $H$ evaluated at  $\u_{\textrm{pd}}^t$. 

 Then, we proceed to bound term \O3: 
\begin{align}\label{eq:bd_3}
    \text{\O3}=\ & \langle H^{T}_{\u_{\textrm{pd}}}- H^{T-1}_{\u_{\textrm{pd}}}, \u_{\textrm{pd}}^{T}-\u_{\textrm{pd}}^\star\rangle + \langle G^{T-1}_{\u_{\textrm{ex}}}, \u_{\textrm{pd}}^{T}-\u_{\textrm{pd}}^\star \rangle \notag  \\
    \leq\ & 2L\|\u_{\textrm{pd}}^{T}-\u_{\textrm{pd}}^{T-1}\|^2 +\frac{L}{2}\|\u_{\textrm{pd}}^T-\u_{\textrm{pd}}^\star\|^2  +\frac{r}{2}\left(2\beta \| \u_{\textrm{pd}}^{T-1}-\u_{\textrm{ex}}^{T-1}\|^2+\frac{\|\u_{\textrm{pd}}^{T}-\u_{\textrm{pd}}^\star\|^2}{2\beta}\right) \notag\\
    =\ & 2L\|\u_{\textrm{pd}}^{T}-\u_{\textrm{pd}}^{T-1}\|^2+\left(\frac{L}{2} +\frac{1}{4\eta}\right)\|\u_{\textrm{pd}}^{T}-\u_{\textrm{pd}}^\star\|^2+\frac{r}{\beta}\|\u_{\textrm{ex}}^{T}-\u_{\textrm{ex}}^{T-1}\|^2. 
\end{align}
Here, the first equality follows from the decomposition $G_{\u_{\textrm{pd}}}=H_{\u_{\textrm{pd}}}-G_{\u_{\textrm{ex}}}$. The inequality is derived from the definition of  $G^{T-1}_{\u_{\textrm{ex}}}$, along with  the following bound: 
\begin{align*}
     \langle H^{T}_{\u_{\textrm{pd}}}- H^{T-1}_{\u_{\textrm{pd}}}, \u_{\textrm{pd}}^{T}-\u_{\textrm{pd}}^\star\rangle 
     \leq\ & L\left( \|\x^T-\x^\star\|+\|\y^T-\y^\star\|\right)\left(\|\x^T-\x^{T-1}\|+\|\y^T-\y^{T-1}\| \right)\\
     \leq\ & L\left(\|\x^T-\x^{T-1}\|+\|\y^T-\y^{T-1}\| \right)^2+\frac{L}{4} \left( \|\x^T-\x^\star\|+\|\y^T-\y^\star\|\right)^2\\
     \leq\ & 2L\|\u_{\textrm{pd}}^T-\u_{\textrm{pd}}^{T-1}\|^2+\frac{L}{2}\|\u_{\textrm{pd}}^{T}-\u_{\textrm{pd}}^\star\|^2,
\end{align*}
where the first inequality follows from Assumption~\ref{ass:1} and the Cauchy-Schwarz inequality, and the second from the AM-GM inequality.  The final equality in \eqref{eq:bd_3} follows from the definition of $G_{\u_{\textrm{ex}}}^T$ and the condition $\beta=\eta r$. 

Incorporating \eqref{eq:cc_bd} and \eqref{eq:bd_3}, we get 
 \begin{align*}
    0\leq\  &\frac{1}{2\eta}\|\u^\star -\u^0\|^2-\left(\frac{1}{4\eta}-\frac{L}{2}\right)\|\u^\star -\u^{T}\|^2- \left(L+r\right)\|\u^T-\u^{T-1}\|^2 \notag\\
    &-\frac{r\left(1-\beta\right)}{\beta^2}\sum_{t=0}^{T-2} \|\u_{\textrm{ex}}^t-\u_{\textrm{ex}}^{t+1}\|^2-\frac{r\left(1-2\beta\right)}{\beta^2}\|\u_{\textrm{ex}}^T-\u_{\textrm{ex}}^{T-1}\|^2  \notag\\
    \leq\ & \frac{1}{2\eta}\|\u^\star -\u^0\|^2-\left(\frac{1}{4\eta}-\frac{L}{2}\right)\|\u^\star -\u^{T}\|^2, \notag 
    \end{align*} 
The last inequality follows from the condition  $0<\eta \leq \tfrac{1}{7(L+r)}$, which ensures $ 1-\beta > 1-2\beta\geq\tfrac{5}{7}>0 $. Rearranging the terms gives 
\[
  \|\u^\star -\u^{T}\|^2\leq  \frac{2}{1-2L\eta} \|\u^\star -\u^0\|^2.
\] 
Using the same condition $0<\eta \leq \tfrac{1}{7(L+r)}$,  we further deduce that for any iterate $T$, 
\[
  \|\u^\star -\u^{T}\|^2\leq   3 \|\u^\star -\u^0\|^2, 
\] 
which completes the proof.
\end{proof}

Armed with Lemma \ref{lem:fund_cc} and Proposition \ref{proposition:cc}, we are ready to establish the optimal iteration complexity of DS-OGDA.  
\begin{proof}[Proof of Theorem \ref{theorem:cc}]
To establish the convergence rate stated in Theorem~\ref{theorem:cc}, we consider the alternative stationarity measure $\tfrac{1}{T} \sum_{t=0}^{T-1} \langle G_{\u}^{t+1}, \u^{t+1} - \u \rangle$, as guaranteed by Lemma~\ref{lem:cc}.  Our goal is to bound this quantity using the upper bound in Proposition~\ref{proposition:cc}(i). Most terms in the bound telescope naturally, leaving only the final inner product term  $\langle G^{t+1}_{\u_{\textrm{ex}}}-G^{t}_{\u_{\textrm{ex}}},  \u_{\textrm{ex}}^{t+1}-\u_{\textrm{ex}}\rangle$, which we denote by term~\O4. Using the update rule of $\u_{\textrm{ex}}^{t+1}$ and the fact that $\beta \leq 1$, we have 
\begin{align}\label{eq:bd_9}
    \text{\O4}
   \leq \ &  \langle G^{t+1}_{\u_{\textrm{ex}}},\u_{\textrm{ex}}^{t+1}-\u_{\textrm{ex}}\rangle-\langle G^{t}_{\u_{\textrm{ex}}}, \u_{\textrm{ex}}^{t}-\u_{\textrm{ex}}\rangle+r\|\u_{\textrm{pd}}^t-\u_{\textrm{ex}}^t\|^2.
\end{align}
Summing both sides of Proposition~\ref{proposition:cc}(i) from $t = 0$ to $T - 1$, and applying \eqref{eq:bd_9} and the fact that $\tfrac{1}{2\eta}-\tfrac{L+r}{2}\geq 3\left(L+r\right) $, we have 
\begin{align}\label{eq:cc_ub_4} 
       &\sum_{t=0}^{T-1} \langle G(\u^{t+1}),\u^{t+1}-\u\rangle \notag\\
       \leq \ & \frac{1}{2\eta}\|\u-\u^0\|^2-\frac{1}{2\eta}\|\u-\u^{T}\|^2-\left(\frac{1}{2\eta}-\frac{L+r}{2}\right)\|\u^{T}-\u^{T-1}\|^2 \notag\\
      & +\underbrace{\langle G^{T}_{\u_{\textrm{pd}}}-G^{T-1}_{\u_{\textrm{pd}}},\u_{\textrm{pd}}^{T}-\u_{\textrm{pd}} \rangle+\langle G^{T}_{\u_{\textrm{ex}}},\u_{\textrm{ex}}^{T}-\u_{\textrm{ex}}\rangle}_{\text{\O5}}+ r\sum_{t=0}^{T-1} \|\u_{\textrm{pd}}^t-\u_{\textrm{ex}}^t\|^2  \notag\\
   \leq\ & \frac{1}{2\eta}\|\u-\u^0\|^2-\left(\frac{1}{2\eta}-\frac{L+r}{2}\right)\|\u-\u^{T}\|^2 +\frac{r}{2} \|\u_{\textrm{pd}}^{T}-\u_{\textrm{ex}}^{T}\|^2\notag \\
   & + r\sum_{t=0}^{T-1} \|\u_{\textrm{pd}}^t-\u_{\textrm{ex}}^t\|^2-\left(\frac{1}{2\eta}- \frac{7\left(L+r\right)}{2} \right)\|\u^{T}-\u^{T-1}\|^2    \notag\\ 
   \leq \ &\frac{1}{2\eta}\|\u-\u^0\|^2 + \frac{3r}{2}\sum_{t=0}^{T-1} \|\u_{\textrm{pd}}^t-\u_{\textrm{ex}}^t\|^2,
    \end{align}   
where the second inequality in follows from an upper bound on the term \text{\O5}. Specifically, the first component of \text{\O5} can be bounded using an argument similar to that of \eqref{eq:detailed_bd}, while the second component is controlled via the Cauchy-Schwarz and AM-GM inequalities. In particular, we obtain
\begin{align} \label{eq:bd_5} 
    \text{\O5} 
    \leq\ &  \frac{L+r}{2}\|\u_{\textrm{pd}}^{T}-\u_{\textrm{pd}}\|^2+3\left(L+r\right)\|\u^T-\u^{T-1}\|^2 +r\langle \u_{\textrm{ex}}^{T}-\u_{\textrm{pd}}^{T}, \u_{\textrm{ex}}^{T}-\u_{\textrm{ex}}\rangle \notag\\
    \leq\ & \frac{L+r}{2}\|\u_{\textrm{pd}}^{T}-\u_{\textrm{pd}}\|^2+3\left(L+r\right)\|\u^T-\u^{T-1}\|^2 +\frac{r}{2}\|\u_{\textrm{pd}}^{T}-\u_{\textrm{ex}}^{T}\|^2  +\frac{r}{2}\|\u_{\textrm{ex}}^{T}-\u_{\textrm{ex}}\|^2.
\end{align}  
The last inequality in \eqref{eq:cc_ub_4} is due to the choice of $\eta$, which ensures that $\tfrac{1}{2\eta}-\tfrac{L+r}{2}>\tfrac{1}{2\eta}- \tfrac{7\left(L+r\right)}{2}  \geq0$.
Due to  Proposition \ref{proposition:cc}(ii), we know, for any $\u_1,\u_2\in \D$, we have
\begin{align*}
     \|\u_1-\u_2\|^2\leq 2\|\u_1-\u^\star\|^2+2\|\u_2-\u^\star\|^2\leq \diam(\D)^2.
\end{align*}
Thus, applying Lemma \ref{lem:fund_cc}, inequality \eqref{eq:cc_ub_4}, and choosing $r\leq \mO(T^{-1})$, we obtain
\begin{align}\label{eq:cc_iter_1}
     \min_{t\in\{0,1,\ldots,T-1\}}\left\{\max_{\y\in \Y} f(\x^t,\y)-\min_{\x\in \X} f(\x,\y^t)\right\}
    \leq\  & \frac{1}{T}\left(\frac{1}{2\eta}\diam(\D)^2+ 3rT\diam(\D)^2\right) \leq \mO(T^{-1}),
\end{align}  
which finishes the proof.  
\end{proof}

\subsection{Proof for Nonconvex Minimax Problems}\label{sec:proof_nc}
In this subsection, we focus on nonconvex minimax problems, including NC-C, C-NC, and NC-NC problems satisfying a one-sided K\L{} condition (see Assumption~\ref{ass:dual}(ii)). Our convergence analysis follows the general framework developed in~\citet{zhang2020single,li2022nonsmooth}, which employs a Lyapunov function to capture the algorithm’s descent behavior.
The main difficulty lies in the absence of primal-dual dominance, which introduces a negative term in the descent estimate and makes it unclear whether the Lyapunov function decreases monotonically. To address this issue, various primal-dual error bounds  have been developed in the literature to balance the primal and dual progress~\citep{zhang2020single,li2022nonsmooth,zheng2023universal}. However, applying this framework to our algorithm presents new challenges due to its specific structure.
To overcome these difficulties, we design a new Lyapunov function tailored to our method, and carefully analyze its descent behavior under appropriate step sizes selection. The primal-dual error bound—originally proposed in \citet{zheng2023universal}—can be also adapted to our setting to ensure sufficient descent.

We construct a new Lyapunov function $\Phi:(\R^{n} \times \R^{d}\times \R^{n} \times \R^{d})^2 \rightarrow \R$ as follows:
 \begin{align}\label{eq:lya_fun}
      & \Phi(\x,\y,\z,\v;\x',\y',\z',\v') \notag\\
       \coloneqq\ & \underbrace{F(\x,\y,\z,\v)-d(\y,\z,\v)}_\text{Primal descent}+\underbrace{p(\z,\v)-d(\y,\z,\v)}_\text{Dual ascent}+\underbrace{q(\z)-p(\z,\v)}_\text{Proximal ascent}   +\underbrace{q(\z)-\underline{F}}_\text{Proximal descent} +\underline{F}\notag\\
       &+\frac{1}{2\eta_{\x}}\|\x-\x'\|^2+\frac{1}{2\eta_{\y}}\|\y-\y'\|^2   +\frac{r_{\x}}{2\beta_{\x}}\|\z-\z'\|^2+\frac{r_{\y}}{2\beta_{\y}}\|\v-\v'\|^2 \notag \\
       =\ & F(\x,\y,\z,\v)-2d(\y,\z,\v)+2q(\z)+\frac{1}{2\eta_{\x}}\|\x-\x'\|^2+\frac{1}{2\eta_{\y}}\|\y-\y'\|^2 \notag\\
       &+\frac{r_{\x}}{2\beta_{\x}}\|\z-\z'\|^2+\frac{r_{\y}}{2\beta_{\y}}\|\v-\v'\|^2, 
    \end{align}
  where $d(\y,\z,\v)\coloneqq\min_{\x\in \X}F(\x,\y,\z,\v)$, $p(\z,\v)\coloneqq\max_{\y\in \Y} d(\y,\z,\v)$, $q(\z)  \coloneqq  \max_{\v\in\R^d} p(\z,\v)$, and $\underline{F}\coloneqq\min_{\z\in \R^n} q(\z)$. 
 Here, four additional quadratic terms are introduced to account for the extrapolation steps. 

 To facilitate the analysis of the Lyapunov function, we introduce several auxiliary  mappings that correspond to the solutions of inner minimization and maximization problems. These mappings capture the implicit updates involved in both smoothing and extrapolation steps and are used repeatedly throughout our descent analysis.
 \begin{definition} We define the following auxiliary optimal solution mappings:
\begin{enumerate}[label=(\roman*)] \setlength{\itemsep}{5pt}
\item $\x(\y,\z,\v)\coloneqq\argmin_{\x\in\X} F(\x,\y,\z,\v)$, 
\item $\y(\x,\z,\v)\coloneqq\argmax_{\y\in \Y} F(\x,\y,\z,\v)$, 
\item $\x(\z,\v)\coloneqq\argmin_{\x\in \X} \max_{\y\in \Y} F(\x,\y,\z,\v)$, 
\item $\v(\z)\coloneqq\argmax_{\v\in \R^d} P(\z,\v)$. 
\end{enumerate}
  \end{definition}
 
 We are now ready to present the basic descent estimate for the constructed Lyapunov function. To ensure the descent property of the Lyapunov function established in Proposition~\ref{proposition:suff_dec}, we impose a set of step size conditions tailored to  our algorithm. These conditions are summarized below.

 \begin{condition}[Step size conditions for basic descent estimate]\label{cond:step} Let $r_{\x}\geq 2L$. Define the constants $\sigma \coloneqq\tfrac{3\eta_{\x} r_{\x}+1}{\eta_{\x}(r_{\x}-L)}$ and $L_d\coloneqq\left(\tfrac{L}{r_{\x}-L}+2\right)  L+r_{\y}$. The step sizes are chosen to satisfy:
 \begin{align*}
  0&<\eta_{\x}\leq \min\left\{\frac{1}{6(4L+r_{\x}+r_{\y})}, \frac{1}{320r_{\y}^2\eta_{\y} }\right\}, \\
  0&<\eta_{\y} \leq  \min\left\{\frac{1}{6(3L+r_{\x}+r_{\y})}, \frac{1}{6  L\sigma^2}, \frac{1}{3(2L_d+15 L+r_{\y})}, \frac{1}{20 L(5L+ 2 )}\right\}, \\
  0 &< \beta_{\x} \leq \min\left\{\frac{2r_{\x}}{80r_{\x} +6r_{\x}^2 +3L^2+12 r_{\x}L},\frac{\eta_{\y}   L^2}{7680 r_{\x}} \right\}, 0  <\beta_{\y} \leq  \frac{\eta_{\y} L^2}{240r_{\y}}.  
  \end{align*}
 \end{condition}

  \begin{proposition}[Basic descent estimate]\label{proposition:suff_dec}
Let $r_{\x}\geq 2L$, $r_{\y}\geq 2 L$, and  step sizes satisfy Condition  \ref{cond:step}. 
  Then, for any $t\geq 0$, we have
   \vspace{-1mm}
   \begin{align}\label{eq:suff_dec}  
	         \Phi^t-\Phi^{t+1} \geq\ &\frac{1}{4\eta_{\x}}  \|\x^t-\x^{t+1}\|^2+\frac{1}{60\eta_{\y}}\|\y^t-\y_+^t(\z^t,\v^t)\|^2+ \frac{r_{\x}}{8\beta_{\x}} \|\z^t-\z^{t+1}\|^2\notag\\
    &+  \frac{r_{\y}}{16\beta_{\y}}\|\v^t-\v_+^t(\z^{t+1})\|^2  + \frac{1}{8\eta_{\x}}\|\x^t-\x^{t-1}\|^2+ \frac{1}{8\eta_{\y}}\|\y^t-\y^{t-1}\|^2\notag\\
    &+ \frac{r_{\x}}{8\beta_{\x}} \|\z^t-\z^{t-1}\|^2 +  \frac{r_{\y}}{8\beta_{\y}} \|\v^t-\v^{t-1}\|^2\notag\\
    & -8r_{\x}\beta_{\x}
          \|\x(\z^{t+1},\v(\z^{t+1}))-\x(\z^{t+1},\v_+^t(\z^{t+1}))\|^2,
	    \end{align} 
 where $\y_{+}(\z,\v)\coloneqq\proj_{\Y}(\y+\eta_{\y} \nabla_{\y}F(\x(\y,\z,\v),\y,\z,\v))$ and 
$\v_{+}(\z)\coloneqq \v  +\beta_{\y}(\y(\x(\z,\v),\z,\v)-\v)$. 
\end{proposition}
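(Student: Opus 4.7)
The plan is to decompose the Lyapunov function into its four structural blocks and establish a descent estimate for each block separately, then combine them using the step-size conditions in Condition~\ref{cond:step} to absorb all the error and cross terms. Specifically, I would write
\[
\Phi^t - \Phi^{t+1} = \underbrace{(F^t - F^{t+1})}_{\text{(A)}} + \underbrace{2(d^{t+1} - d^t)}_{\text{(B)}} + \underbrace{2(q^t - q^{t+1})}_{\text{(C)}} + \underbrace{(\text{tracking terms})}_{\text{(D)}},
\]
where the tracking terms are the four quadratic differences $\tfrac{1}{2\eta_{\x}}(\|\x^t-\x^{t-1}\|^2 - \|\x^{t+1}-\x^t\|^2)$ and the analogous ones in $\y,\z,\v$.

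For block (A), I would leverage the $r_{\x}$-strong convexity of $F(\cdot,\y,\z,\v)$ (which follows from $r_{\x}\geq 2L$ and the $L$-smoothness of $f$) together with the OGDA update for $\x^{t+1}$, to obtain a three-point inequality that produces a positive quadratic in $\|\x^t-\x^{t+1}\|^2$ while generating a residual $\|G_{\x}^{t-1}-G_{\x}^t\|^2$ from the extrapolation step; the latter is bounded by $L^2 \|\u^t-\u^{t-1}\|^2$ using Assumption~\ref{ass:1}. A symmetric analysis handles the $\y$ direction. For the $\z,\v$ updates, which are plain smoothing steps, the descent bound follows by expanding $F(\x,\y,\z,\v)$ as a quadratic in $\z$ (resp. $\v$) and plugging in the convex combination update.

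For blocks (B) and (C), the idea is to apply the envelope/Danskin-type inequalities to the value functions $d$, $p$, and $q$. Since $F$ is $r_{\x}$-strongly convex in $\x$ and $r_{\y}$-strongly concave in $\y$, the mappings $\x(\y,\z,\v)$, $\y(\x,\z,\v)$, and $\v(\z)$ are single-valued and Lipschitz; this gives smoothness of $d$, $p$, and $q$ with explicit constants (the constant $L_d$ in Condition~\ref{cond:step} is precisely the smoothness modulus of $d$). I would then use these smoothness estimates to compare $d^{t+1}$ with $d^t$ (and similarly for $q$), expressing the differences in terms of $\|\y^t-\y_+^t(\z^t,\v^t)\|^2$, $\|\v^t-\v_+^t(\z^{t+1})\|^2$, plus cross-coupling errors involving $\|\z^t-\z^{t+1}\|$, $\|\v^t-\v^{t+1}\|$, and the primal-dual gap $\|\x^t-\x(\y^t,\z^t,\v^t)\|$.

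The main obstacle, as flagged in the surrounding discussion, is the absence of primal-dual dominance: the descent decomposition for $q$ generates a term of the form $-2r_{\x}\beta_{\x}\|\x(\z^{t+1},\v(\z^{t+1})) - \x(\z^{t+1},\v_+^t(\z^{t+1}))\|^2$ which measures how much the regularized saddle solution in $\x$ shifts as $\v$ is smoothed. This term cannot be cancelled purely by the block-wise estimates and appears explicitly on the right-hand side of~\eqref{eq:suff_dec}; it will later be absorbed via the primal-dual error bound from~\citet{zheng2023universal} in a separate step of the global proof. The second subtle point is balancing the OGDA residual terms $\|G^{t-1}-G^t\|^2$ against the positive tracking terms $\tfrac{1}{2\eta}\|\u^t-\u^{t-1}\|^2$ created by block (D); this is exactly why Condition~\ref{cond:step} forces $\eta_{\x},\eta_{\y}$ small relative to $L$ and couples $\beta_{\x},\beta_{\y}$ to $\eta_{\y} L^2$, ensuring every cross term produced by smoothness arguments in blocks (A)--(C) is dominated by a positive quadratic, leaving the clean coefficients $\tfrac{1}{4\eta_{\x}}$, $\tfrac{1}{60\eta_{\y}}$, $\tfrac{r_{\x}}{8\beta_{\x}}$, $\tfrac{r_{\y}}{16\beta_{\y}}$ on the right-hand side. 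The final step is bookkeeping: collect all positive quadratics, verify by direct computation (using the explicit bounds on $\eta_{\x},\eta_{\y},\beta_{\x},\beta_{\y}$) that each coefficient is non-negative, and match the resulting expression to~\eqref{eq:suff_dec}.
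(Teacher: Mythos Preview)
Your proposal is correct and follows essentially the same route as the paper. The paper's proof likewise decomposes $\Phi^t-\Phi^{t+1}$ along the structural blocks of the Lyapunov function, invoking a primal-descent lemma for $F^t-F^{t+1}$ (your block (A)), a dual-ascent lemma for $d^{t+1}-d^t$ (your block (B)), and the Zheng--2023 proximal-ascent/descent analysis for $p$ and $q$ (your block (C)), then absorbs all cross terms via the step-size bookkeeping in Condition~\ref{cond:step}; the only point you leave implicit is the conversion lemma bounding $\|\x^{t+1}-\x(\y^t,\z^t,\v^t)\|$ and $\|\y^{t+1}-\y_+^t(\z^t,\v^t)\|$ in terms of successive iterate differences, which is what actually lets the $\|\y^t-\y_+^t\|^2$ term (rather than $\|\y^t-\y^{t+1}\|^2$) appear on the right-hand side and is where the extrapolation residuals enter with the constant $\sigma_6$.
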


To control the negative term in \eqref{eq:suff_dec}, we introduce a primal-dual error bound, which has been comprehensively studied in our prior work \citep[Propositions 2]{zheng2023universal}.  For completeness, we state the error bound result below.

	\begin{proposition}
   \label{proposition:NC_KL}  For any $\z \in \R^n$ and $\v \in \R^d$, we have 
 \begin{enumerate}[label=(\roman*)] \setlength{\itemsep}{0pt}
  \item  Suppose that Assumption \ref {ass:dual}(i) holds and $\Y$ is compact. We have 
   \vspace{-2mm}
   \[
    \|\x(\z^{t+1},\v_+^t(\z^{t+1}))-\x(\z^{t+1},\v(\z^{t+1}))\|^2 \leq \omega_0\|\v_+^t(\z^{t+1})-\v^t\|,
\]
\item  Suppose that Assumption \ref {ass:dual}(ii) holds. We have
 \vspace{-2mm}
 \[
		 \|\x(\z^{t+1},\v_+^t(\z^{t+1}))-\x(\z^{t+1},\v(\z^{t+1}))\|^2\leq \omega_1\|\v_+^t(\z^{t+1})-\v^t\|^{\frac{1}{\theta}},
		\] 
\end{enumerate}
where $\omega_0 \coloneqq\tfrac{4r_{\y}\diam(\Y)}{r_{\x}-L}\left(\tfrac{1-\beta_{\y}}{\beta_{\y}}+\tfrac{r_{\y}}{r_{\y}- L}\right)$ and 
  $\omega_1 \coloneqq\tfrac{2}{(r_{\x}-L)\tau_{\y}}\left(\tfrac{r_{\y}(1-\beta_{\y})}{\beta_{\y}}+\tfrac{r_{\y}^2}{r_{\y}- L}\right)^{\tfrac{1}{\theta}}$. 
	\end{proposition}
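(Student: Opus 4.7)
The plan is to reduce the error bound to the one established for the DS-GDA algorithm in \citet[Proposition 2]{zheng2023universal}. The crucial observation is that both bounds concern only properties of the regularized function $F$ and of the auxiliary mappings $\x(\z,\v)$, $\y(\x,\z,\v)$, $\v(\z)$, all of which are defined identically in DS-OGDA and DS-GDA. Consequently, the bound is independent of the specific algorithmic updates (extrapolation or not) and carries over once the smoothing parameters $\beta_{\x},\beta_{\y}$ satisfy the same prerequisites, which is ensured by Condition \ref{cond:step}.

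For completeness, I would outline a three-step derivation. First, since $r_{\x}\geq 2L$, the function $\x \mapsto \max_{\y\in\Y} F(\x,\y,\z,\v)$ is $(r_{\x}-L)$-strongly convex. Applied at $\v_1 = \v_+^t(\z^{t+1})$ and $\v_2=\v(\z^{t+1})$, this yields the quadratic growth estimate
\[
\tfrac{r_{\x}-L}{2}\|\x(\z^{t+1},\v_1)-\x(\z^{t+1},\v_2)\|^2 \leq \max_{\y\in\Y} F(\x(\z^{t+1},\v_1),\y,\z^{t+1},\v_2) - p(\z^{t+1},\v_2).
\]
Second, I would decompose the right-hand side into the dual gap $p(\z^{t+1},\v_2)-p(\z^{t+1},\v_1)$ (with the sign absorbed after rearrangement) plus cross terms stemming from the $-\tfrac{r_{\y}}{2}\|\y-\v\|^2$ component of $F$, which can be controlled using strong concavity of $F$ in $\y$ (ensured by $r_{\y}\geq 2L$) together with a Lipschitz-type bound on $\y(\x,\z,\cdot)$. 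Third, I would reinterpret the residual $\v_+^t(\z^{t+1})-\v^t$, whose magnitude equals $\beta_{\y}\|\y(\x(\z^{t+1},\v^t),\z^{t+1},\v^t)-\v^t\|$, as a scaled proximal-gradient quantity for the concave problem $\max_{\v} p(\z^{t+1},\v)$, and use this to bound the distance $\|\v_+^t(\z^{t+1})-\v(\z^{t+1})\|$ (hence the dual gap) appearing in Step 2.

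The branching between parts (i) and (ii) occurs only at this last step. For (i), concavity of $f(\x,\cdot)$ makes $p(\z,\cdot)$ a concave function with compact optimal set inside $\Y$, so the dual gap is bounded linearly by the residual, with $\diam(\Y)$ emerging in $\omega_0$. For (ii), the one-sided K\L{} property with exponent $\theta$ translates the gradient-type residual into a suboptimality bound of the form (residual)${}^{1/\theta}$, which produces the exponent $1/\theta$ in the statement and the factor $1/\tau_{\y}$ in $\omega_1$. The main technical obstacle is tracking the precise constants—in particular, the factors $\tfrac{1-\beta_{\y}}{\beta_{\y}}$, which arise because the smoothing step advances only a fraction $\beta_{\y}$ of the way toward $\y(\x(\z^{t+1},\v^t),\z^{t+1},\v^t)$, and $\tfrac{r_{\y}}{r_{\y}-L}$, which is the natural condition number from strong concavity in $\y$. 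Since this bookkeeping is already carried out in detail in \citet{zheng2023universal} for the same function $F$, the proof here amounts to invoking that result directly.
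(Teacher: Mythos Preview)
Your proposal is correct and matches the paper's own treatment: the paper does not give a fresh proof of this proposition but simply states it and refers to \citet[Proposition~2]{zheng2023universal}, noting that the error bound depends only on the regularized function $F$ and the auxiliary mappings, which are identical for DS-GDA and DS-OGDA. Your additional three-step sketch is a reasonable (and accurate) summary of how that cited argument proceeds, and goes beyond what the paper itself provides.
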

Armed with Propositions~\ref{proposition:suff_dec} and~\ref{proposition:NC_KL}, the convergence of DS-OGDA—i.e., Theorem~\ref{theorem:2}—can be established by following the analysis framework in~\citet{zhang2020single, li2022nonsmooth, zheng2023universal}.
Furthermore, the symmetric nature of DS-OGDA allows the descent analysis in Proposition~\ref{proposition:suff_dec} to be extended to C-NC and KŁ-NC problems.
In parallel, an analogous primal-dual error bound under Assumption~\ref{ass:primal} has been established in~\citet[Proposition 6]{zheng2023universal}, which, together with the descent analysis, leads to the proof of Corollary~\ref{col:1}.

\subsection{Proof of Universal Results}\label{sec:proof_universal}
In the previous subsections, we have shown that DS-OGDA achieves convergence across all problem classes. To establish its universal applicability, it remains to verify the feasibility of using symmetric step sizes—i.e., selecting parameters in Condition~\ref{cond:step} such that $\eta_{\x} = \eta_{\y}$ and $\beta_{\x} = \beta_{\y}$. Lemma~\ref{lemma:symm_steps} confirms that such a symmetric parameter set exists. Equipped with this result, we proceed to refine the convergence analysis under the C-C setting to derive an improved iteration complexity, thus completing the proof of Theorem~\ref{theorem:universal}. 
\begin{lemma}[Feasibility of symmetric step sizes] \label{lemma:symm_steps}
Suppose either Assumption \ref {ass:dual}(i), \ref{ass:dual}(ii), \ref{ass:primal}(i) or \ref{ass:primal}(ii) holds. For any $\underline{\varepsilon}>0$ and any integer $T>1$, assume the K\L{} exponent $\theta$ of the primal or dual problem satisfies $\theta \in [\underline{\varepsilon}, 1]$. We consider a symmetric parameter setting: $r := r_{\x} = r_{\y} \geq 74L$, $\eta := \eta_{\x} = \eta_{\y}$, and $\beta := \beta_{\x} = \beta_{\y}$. 
Then, the step sizes satisfy the general requirements in Condition~\ref{cond:step} provided the following hold:
    \[
  \begin{aligned} 
       &  \,\frac{-(L-r)\sqrt{L^2-74Lr+r^2}+L^2-38Lr+r^2}{108Lr^2} \! \leq  \eta \! \leq \min\left\{\!\frac{1}{8\sqrt{5} r}, \frac{1}{20L(5L+2)}\!\right\},\\
       &  0< \beta \leq \min\left \{\frac{2r}{80r+6r^2+3L^2+12rL},\frac{\eta L^2 }{7680r }, \right.\\
       &\quad \quad \quad \quad \quad \quad \left. \frac{\sqrt{(r-L)\tau}\left(\beta(r-L)\right)^{\frac{1}{2\underline{\varepsilon}}}}{8\sqrt{ 2\left(2r\left(r-L+r\beta\right)\right) ^{\frac{1}{\underline{\varepsilon}}}\max\left\{\diam(\Y),\diam(\X)\right\}^{\frac{1}{\underline{\varepsilon}}-2} }}  \right\}\leq \mO(T^{-1/2}).
  \end{aligned}
  \]
\end{lemma}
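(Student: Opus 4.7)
The strategy is to substitute the symmetric choice $r := r_{\x} = r_{\y}$, $\eta := \eta_{\x} = \eta_{\y}$, $\beta := \beta_{\x} = \beta_{\y}$ directly into each inequality in Condition~\ref{cond:step} and read off the resulting joint constraints on $(r,\eta,\beta)$. Several requirements become trivial once $r \geq 74L$: the bounds $\eta \leq \tfrac{1}{6(4L+2r)}$, $\eta \leq \tfrac{1}{6(3L+2r)}$, and $\eta \leq \tfrac{1}{3(2L_d + 15L + r)}$ all scale like $\Theta(1/r)$ and are dominated by the announced upper bound $\min\{\tfrac{1}{8\sqrt{5}\,r},\tfrac{1}{20L(5L+2)}\}$. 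The cross-coupled requirement $\eta_{\x} \leq \tfrac{1}{320\,r_{\y}^2\eta_{\y}}$ reduces under symmetry to $\eta^2 \leq \tfrac{1}{320\,r^2}$, which is exactly the upper bound $\eta \leq \tfrac{1}{8\sqrt{5}\,r}$.

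The nontrivial step concerns the self-referential constraint $\eta \leq \tfrac{1}{6L\sigma^2}$ with $\sigma = \tfrac{3\eta r + 1}{\eta(r-L)}$. Clearing denominators and expanding yields the quadratic inequality
\[
54 L r^2 \eta^2 - (r^2 - 38 L r + L^2)\,\eta + 6L \leq 0.
\]
A short calculation shows that its discriminant factors as $(r^2 - 74 L r + L^2)(r-L)^2$, so the hypothesis $r \geq 74 L$ is exactly what is required to make this discriminant nonnegative. The roots are then $\eta_{\pm} = \tfrac{(r^2 - 38 L r + L^2)\pm (r-L)\sqrt{r^2 - 74 L r + L^2}}{108 L r^2}$, and the quadratic is nonpositive precisely on $[\eta_-, \eta_+]$, matching the lower bound stated in the lemma. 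It remains to verify that $\eta_- \leq \min\{\tfrac{1}{8\sqrt{5}\,r},\tfrac{1}{20L(5L+2)}\}$, which follows from a direct asymptotic estimate of $\eta_-$ when $r \gg L$, so the admissible interval for $\eta$ is nonempty.

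For $\beta$, the first two announced upper bounds are the symmetric forms of $\beta_{\x} \leq \tfrac{2r_{\x}}{80r_{\x} + 6r_{\x}^2 + 3L^2 + 12r_{\x}L}$ and $\beta_{\x} \leq \tfrac{\eta_{\y} L^2}{7680\,r_{\x}}$, noting that the corresponding bound on $\beta_{\y}$ is weaker and hence automatic. The third bound, involving $\tau$ and $\underline{\varepsilon}$, arises from the descent analysis: in Proposition~\ref{proposition:suff_dec} the negative term $-8r_{\x}\beta_{\x}\|\x(\z^{t+1},\v(\z^{t+1})) - \x(\z^{t+1},\v_+^t(\z^{t+1}))\|^2$ must be absorbed by the positive contribution $\tfrac{r_{\y}}{16\beta_{\y}}\|\v^t - \v_+^t(\z^{t+1})\|^2$. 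Invoking Proposition~\ref{proposition:NC_KL}(ii) replaces the negative term by $\omega_1\,\|\v_+^t(\z^{t+1}) - \v^t\|^{1/\theta}$, and demanding that the combined quantity be nonpositive uniformly over $\theta \in [\underline{\varepsilon},1]$ yields exactly the stated explicit bound on $\beta$. Matching this expression against the $T$-dependent scaling needed in Theorems~\ref{theorem:2} and~\ref{theorem:universal} gives the $\mO(T^{-1/2})$ scaling. The main technical obstacle is the self-referential quadratic in $\eta$ analyzed above: the threshold $r \geq 74L$ is not arbitrary but coincides exactly with the vanishing of the discriminant $r^2 - 74Lr + L^2$, so any weaker lower bound on $r$ would render the symmetric parameter family infeasible.
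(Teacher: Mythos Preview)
Your proposal is correct and follows essentially the same approach as the paper: substitute the symmetric parameters into Condition~\ref{cond:step}, identify the self-referential constraint $\eta \leq \tfrac{1}{6L\sigma^2}$ as the binding one, and solve the resulting quadratic in $\eta$ to obtain the stated interval. Your derivation is in fact more detailed than the paper's terse argument—in particular, the explicit discriminant factorization $(r^2-74Lr+L^2)(r-L)^2$ and the observation that the cross-coupled bound $\eta_{\x}\leq\tfrac{1}{320 r_{\y}^2\eta_{\y}}$ is precisely the origin of the $\tfrac{1}{8\sqrt{5}\,r}$ upper bound are not spelled out in the paper but are exactly right.
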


\begin{proof} 
If $r\ge 2L$,  Condition~\ref{cond:step} becomes 
 \begin{subequations}
     \begin{equation}\label{eq:bd1}
     \frac{6L(3 \eta r+1)^2}{ (r-L)^2}\leq \eta \leq \min\left\{\frac{1}{12(2L+r)}, \frac{1}{3(2L_d+15L+r)} , \frac{1}{20L(5L+2)},\frac{1}{8\sqrt{5} r} \right \},
  \end{equation}
 \begin{equation}\label{eq:bd2}
 \begin{aligned} 
      0< \beta \leq\  \min &\left \{\frac{2r}{80r+6r^2+3L^2+12rL},\frac{\eta L^2 }{7680r }, \right.\\
        & \ \ \left. \frac{\sqrt{(r-L)\tau}\left(\beta(r-L)\right)^{\frac{1}{2\underline{\varepsilon}}}}{8\sqrt{ 2\left(2r\left(r-L+r\beta\right)\right) ^{\frac{1}{\underline{\varepsilon}}}\max\left\{\diam(\Y),\diam(\X)\right\}^{\frac{1}{\underline{\varepsilon}}-2} }}  \right\}\leq \mO(T^{-1/2}). 
 \end{aligned}
 \end{equation}
 \end{subequations}

 Next, we focus on the inequality in \eqref{eq:bd1} and solve the resulting quadratic inequality in $\eta$. This yields the following valid interval:
\begin{align}\label{eq:universal_nc}
    \eta\geq\ & \frac{-(r-L)\sqrt{L^2-74Lr+r^2}+L^2-38Lr+r^2}{108Lr^2}, \notag\\
    \eta \leq\ & \frac{(r-L)\sqrt{L^2-74Lr+r^2}+L^2-38Lr+r^2}{108Lr^2}.
\end{align}
To ensure the positivity of the lower bound, we impose the constraint  $r\geq 74L$. Substituting this into \eqref{eq:universal_nc} and combining with the upper bound in \eqref{eq:bd1} yields
\begin{align*}
    &\frac{-(r-L)\sqrt{L^2-74Lr+r^2}+L^2-38Lr+r^2}{108Lr^2} \leq  \eta  \leq \min\left\{\frac{1}{8\sqrt{5}r}, \frac{1}{20L(5L+2)}\right\}.
\end{align*}
 Therefore, we finish the proof.
\end{proof}
With the symmetric step-size choices established in Lemma~\ref{lemma:symm_steps}, the universal convergence result in Theorem~\ref{theorem:universal}(i) follows directly from Theorem~\ref{theorem:2} and Corollary~\ref{col:1}. What remains is to refine the analysis for the C-C case to derive the improved iteration complexity under the same step-size conditions.

\begin{proof}[Proof of Theorem \ref{theorem:universal}(ii)]
   We start to derive a general upper bound for  $\langle G_{\u}^{t+1},\u^{t+1}-\u\rangle$. To this end, we remove the condition $\beta = \eta r$  previously used in the derivation of \eqref{eq:cc_0}, combine this with the bound in \eqref{eq:detailed_bd}, which yields: 
    \begin{align} 
        &\langle G^{t+1}_{\u_{\textrm{pd}}},\u_{\textrm{pd}}^{t+1}-\u_{\textrm{pd}}\rangle \notag\\
       \leq\ & \frac{1}{2\eta} \|\u_{\textrm{pd}}-\u_{\textrm{pd}}^t\|^2\ -\frac{1}{2\eta}\|\u_{\textrm{pd}}-\u_{\textrm{pd}}^{t+1}\|^2 -\left(\frac{1}{2\eta}-\frac{L+r}{2}\right)\|\u_{\textrm{pd}}^{t+1}-\u_{\textrm{pd}}^t\|^2\notag \\
      & + 3\left(L+r\right)\|\u^t-\u^{t-1}\|^2 +\langle G^{t+1}_{\u_{\textrm{pd}}}-G_{\u_{\textrm{pd}}}^t,\u_{\textrm{pd}}^{t+1}-\u_{\textrm{pd}}\rangle - \langle G^{t}_{\u_{\textrm{pd}}}-G_{\u_{\textrm{pd}}}^{t-1},\u_{\textrm{pd}}^{t}-\u_{\textrm{pd}}\rangle, 
        \label{eq:compact_1}   \, \text{and}\\
     &\langle G^{t+1}_{\u_{\textrm{ex}}},\u_{\textrm{ex}}^{t+1}-\u_{\textrm{ex}}\rangle \notag\\
    \leq\  &\frac{r}{2\beta} \|\u_{\textrm{ex}}-\u_{\textrm{ex}}^t\|^2\ -\frac{r}{2\beta}\|\u_{\textrm{ex}}-\u_{\textrm{ex}}^{t+1}\|^2-\frac{r}{2\beta}\|\u_{\textrm{ex}}^{t+1}-\u_{\textrm{ex}}^t\|^2 + \langle G^{t+1}_{\u_{\textrm{ex}}},  \u_{\textrm{ex}}^{t+1}-\u_{\textrm{ex}}\rangle \notag\\
    & - \langle G_{\u_{\textrm{ex}}}^t,  \u_{\textrm{ex}}^{t}-\u_{\textrm{ex}}\rangle    +\frac{r}{\beta}\|\u_{\textrm{ex}}^{t+1}-\u_{\textrm{ex}}^t\|^2  \label{eq:compact_4}. 
\end{align}
Summing \eqref{eq:compact_1} and \eqref{eq:compact_4} from $t = 0$ to $T - 1$ yields, 
\begin{align}
    \label{eq:compact_main}
    &\sum_{t=0}^{T-1} \langle G_{\u}^{t+1},\u^{t+1}-\u\rangle   \notag \\
        \leq\ & 
        \frac{1}{2\eta} \|\u_{\textrm{pd}}-\u_{\textrm{pd}}^0\|^2\ -\frac{1}{2\eta}\|\u_{\textrm{pd}}-\u_{\textrm{pd}}^{T}\|^2+\frac{r}{2\beta} \|\u_{\textrm{ex}}-\u_{\textrm{ex}}^0\|^2\ -\frac{r}{2\beta}\|\u_{\textrm{ex}}-\u_{\textrm{ex}}^{T}\|^2 \notag \\
        &-3\left(L+r\right)\|\u^T-\u^{T-1}\|^2
      +\langle G^{T}_{\u_{\textrm{pd}}}- G^{T-1}_{\u_{\textrm{pd}}},\u_{\textrm{pd}}^{T}-\u_{\textrm{pd}}\rangle   +\langle G^{T}_{\u_{\textrm{ex}}},  \u_{\textrm{ex}}^{T}-\u_{\textrm{ex}}\rangle  +\frac{r}{\beta} \sum_{t=0}^{T-1}  \|\u_{\textrm{ex}}^{t+1}-\u_{\textrm{ex}}^t\|^2 \notag \\
   \leq\ &  \frac{1}{2\eta} \|\u_{\textrm{pd}}-\u_{\textrm{pd}}^0\|^2\ -\left(\frac{1}{2\eta}-\frac{L+r}{2}\right)\|\u_{\textrm{pd}}-\u_{\textrm{pd}}^{T}\|^2+\frac{r}{2\beta} \|\u_{\textrm{ex}}-\u_{\textrm{ex}}^0\|^2\  -\frac{r\left(1-\beta\right)}{2\beta}\|\u_{\textrm{ex}} -\u_{\textrm{ex}}^{T}\|^2 \notag \\
   & + \frac{r}{2} \|\u_{\textrm{pd}}^T-\u_{\textrm{ex}}^T\|^2  + \frac{r}{\beta} \sum_{t=0}^{T-1} \|\u_{\textrm{ex}}^{t+1}-\u_{\textrm{ex}}^t\|^2  \notag \\
   \leq\ &  \frac{1}{2\eta} \|\u_{\textrm{pd}}-\u_{\textrm{pd}}^0\|^2 +\frac{r}{2\beta} \|\u_{\textrm{ex}}-\u_{\textrm{ex}}^0\|^2\   + \frac{r}{2} \|\u_{\textrm{pd}}^T-\u_{\textrm{ex}}^T\|^2  + \frac{r}{\beta} \sum_{t=0}^{T-1}  \|\u_{\textrm{ex}}^{t+1}-\u_{\textrm{ex}}^t\|^2,
\end{align}  
where the first inequality holds since $\tfrac{1}{2\eta}-\tfrac{L+r}{2} \geq 3\left(L + r\right)$  and $\tfrac{r}{2\beta} \geq 3\left(L + r\right)$, the second one follows from~\eqref{eq:bd_5}.
Moreover, from Proposition~\ref{proposition:suff_dec}, we have
\begin{align*} 
    \sum_{t=0}^T  \frac{\|\u_{\textrm{ex}}^{t+1}-\u_{\textrm{ex}}^t\|^2}{\beta} \leq\ & \Phi^0-\Phi^T +8r\beta \sum_{t=0}^T \|\x(\z^{t+1},\v(\z^{t+1}))-\x(\z^{t+1},\v_+^t(\z^{t+1}))\|^2\\
    \leq\ & \Phi^0+8Tr\beta \diam(\X)^2 
    \leq\mO(T^{1/2}). 
\end{align*} The symmetric nature of DS-OGDA ensures that the bound $ \sum_{t=0}^T  \tfrac{\|\u_{\textrm{ex}}^{t+1}-\u_{\textrm{ex}}^t\|^2}{\beta} \leq \mO(T^{1/2})$ also holds in the C-NC and K\L{}-NC problem settings.
Incorporating this with \eqref{eq:compact_main} yields
\[
\begin{aligned}
     \frac{1}{T}\sum_{t=0}^{T-1} \langle G_{\u}^{t+1},\u^{t+1}-\u\rangle\leq \mO(T^{-1/2}).
\end{aligned}
\]
The proof is completed by invoking Lemma~\ref{lem:fund_cc}.
\end{proof} 
 
\section{Suboptimal Performance of Existing Algorithms} \label{sec:tight} 
In this section, we review several widely used methods in the literature and examine why they fall short of being universal approaches for minimax problems. Their primary limitation lies in the failure to meet the second criterion of optimal achievability—specifically, these algorithms are unable to attain optimal or best-known performance guarantees even when the underlying structural assumptions are known and verified. 

As mentioned earlier, there are two primary algorithmic frameworks for designing single-loop methods for minimax problems. The first category comprises VI-based methods, with EG and OGDA being the most prominent. These methods achieve optimal iteration complexity for C-C problems. However, in the NC-C setting, their iteration complexity degrades to $\mO(\epsilon^{-6})$, which is worse than the best-known results. Moreover, this slower rate has been shown to be tight in \citet{mahdavinia2022tight}, underscoring a key limitation: These methods fail to satisfy the optimal achievability criterion. The second category includes algorithms in which the primal and dual variables are not updated simultaneously, such as S-GDA \citep{zhang2020single} and DS-GDA \citep{zheng2023universal}. These methods are tailored to one-sided dominance conditions and are known to attain the best-known complexities for various nonconvex minimax problems. However, it remains unclear whether they can achieve the optimal rate in the C-C setting.

In the following, we show that even when the constraint sets are compact, DS-GDA—i.e., Algorithm~\ref{alg:1} with the red part removed—achieves only a suboptimal iteration complexity of $\mO(\epsilon^{-2})$ in the convex-concave (C-C) setting. Furthermore, we demonstrate that this suboptimality is not an artifact of analysis—it is tight.

\begin{theorem}[Convergence result of DS-GDA for C-C problems]\label{theorem:dsgda}
 Suppose that Assumptions \ref{ass:dual} (i) and \ref{ass:primal} (i) hold and both $\X$ and $\Y$ are compact. For any integer $T \geq 0$, if we set $0 < \eta \coloneq \eta_{\x} = \eta_{\y} \leq \mO(T^{-1/2})$, $0  \leq r \coloneq  r_{\x} = r_{\y} \leq \mO(1)$, and $0 <  \beta  \coloneq\beta_{\x} = \beta_{\y}=  \eta r \leq \mO(T^{-1/2})$, then there exists an index $t\in\{0,1,\ldots, T-1\}$ such that
 $\left(\x^{t+1},  \y^{t+1} \right)$ is an $\mO(T^{-1/2})$-SP of problem \eqref{eq:prob}.
\end{theorem}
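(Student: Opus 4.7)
The plan is to exploit the fact that, under the symmetric choice $\beta_{\x}=\beta_{\y}=\eta r$, DS-GDA reduces exactly to vanilla projected simultaneous gradient descent-ascent applied to the regularized function $F$, and then invoke the standard $\mathcal{O}(T^{-1/2})$ convex-concave GDA analysis. Concretely, since $\nabla_{\z}F=r(\z-\x)$ and $\nabla_{\v}F=r(\v-\y)$, the smoothing updates $\z^{t+1}=\z^{t}+\beta(\x^{t}-\z^{t})$ and $\v^{t+1}=\v^{t}+\beta(\y^{t}-\v^{t})$ with $\beta=\eta r$ coincide with gradient steps on $F$, so the entire algorithm becomes $\u^{t+1}=\proj_{\U}(\u^{t}-\eta G_{\u}^{t})$ on $\U=\X\times\Y\times\R^{n}\times\R^{d}$. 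Since $\z^{0}=\x^{0}\in\X$ and each $\z^{t+1}$ is a convex combination of $\z^{t}$ and $\x^{t}$ (as $\beta\leq 1$), induction yields $\z^{t}\in\X$ and analogously $\v^{t}\in\Y$ for every $t$; thus all iterates lie in the compact set $\X\times\Y\times\X\times\Y$, and $\|G_{\u}^{t}\|$ is uniformly bounded by some $M=\mathcal{O}(1)$.

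I would then apply the textbook convex-concave GDA analysis to $F$. By Lemma~\ref{lem:cc}, $F$ is jointly convex in $(\x,\z)$ and jointly concave in $(\y,\v)$, so $G_{\u}$ is a monotone operator. Nonexpansiveness of the projection combined with the expansion of $\|\u^{t+1}-\u\|^{2}$ gives, for any $\u\in\U$,
\[
\langle G_{\u}^{t},\u^{t}-\u\rangle \leq \frac{\|\u^{t}-\u\|^{2}-\|\u^{t+1}-\u\|^{2}}{2\eta}+\frac{\eta\,\|G_{\u}^{t}\|^{2}}{2}.
\]
Summing from $t=0$ to $T-1$ and combining with the first-order convex-concavity inequality $\langle G_{\u}^{t},\u^{t}-\u\rangle\geq F(\x^{t},\y,\z^{t},\v)-F(\x,\y^{t},\z,\v^{t})$ (the same device as in Proposition~\ref{proposition:cc}(i), minus the OGDA cross terms) yields, for any fixed $\u$,
\[
\frac{1}{T}\sum_{t=0}^{T-1}\bigl[F(\x^{t},\y,\z^{t},\v)-F(\x,\y^{t},\z,\v^{t})\bigr] \leq \frac{\|\u^{0}-\u\|^{2}}{2\eta T}+\frac{\eta M^{2}}{2}.
\]
Converting back to the $f$-duality gap follows exactly as in Lemma~\ref{lem:fund_cc}: invoking~\eqref{eq:alter_cc} dominates the $f$-gap by the $F$-gap, and since the optimal regularization variables in~\eqref{eq:alter_cc} are $z=x$ and $v=y$, the effective test points $\u$ may be restricted to the compact set $\X\times\Y\times\X\times\Y$, keeping $\|\u^{0}-\u\|^{2}=\mathcal{O}(1)$. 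The symmetric choices $\eta=\Theta(T^{-1/2})$, $r=\mathcal{O}(1)$, and $\beta=\eta r=\Theta(T^{-1/2})$ then balance the two terms to produce $\min_{t}\{\max_{\y\in\Y}f(\x^{t+1},\y)-\min_{\x\in\X}f(\x,\y^{t+1})\}\leq\mathcal{O}(T^{-1/2})$.

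The main obstacle I anticipate is bridging the per-iterate summed inequality above and the ``there exists $t$'' (best-iterate) statement in the theorem, since vanilla GDA on convex-concave problems produces only an ergodic rate in general. Fortunately, Lemma~\ref{lem:fund_cc} already packages precisely the argument needed, bounding a $\min_{t}$-type duality gap in terms of $\tfrac{1}{T}\sum_{t}\langle G_{\u}^{t+1},\u^{t+1}-\u\rangle$ evaluated at a single test point, so the simpler GDA error structure plugs in directly in place of the OGDA one; no new technical idea should be required beyond carefully tracking the projection and the nonnegativity of the regularization terms.
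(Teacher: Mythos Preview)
Your proposal is correct and reaches the same conclusion, but by a different route than the paper.

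The paper casts DS-GDA as an \emph{inexact} proximal point method on $F$: it applies Lemma~\ref{lem:ippm} with error $\meps^{t}=\eta\bigl(G_{\u}^{t+1}-G_{\u}^{t}\bigr)$, then controls the resulting cross term $\langle G_{\u}^{t+1}-G_{\u}^{t},\u^{t+1}-\u\rangle$ via Lipschitz continuity of $G$ together with Cauchy--Schwarz and AM--GM, obtaining an extra contribution $4\eta(L+r)^{2}\|\u-\u^{t+1}\|^{2}$ per step; summing, taking $\eta=\mO(T^{-1/2})$, and invoking boundedness of the iterates gives the rate. You instead recognize DS-GDA with $\beta=\eta r$ as \emph{exact} projected GDA on $F$ and use the textbook one-step inequality, so the extra term is simply $\tfrac{\eta}{2}\|G_{\u}^{t}\|^{2}$; after your clean inductive argument that $\z^{t}\in\X$ and $\v^{t}\in\Y$ (hence $\|G_{\u}^{t}\|\le M$), the rate follows identically. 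Your path is more elementary---it avoids the inexact-PPM detour and makes the boundedness of $(\z^{t},\v^{t})$ explicit, something the paper relies on tacitly when it appeals to ``boundedness of constraint sets'' to control $\|\u-\u^{t+1}\|$. The paper's path, on the other hand, reuses the same PPM scaffolding it built for DS-OGDA in Section~\ref{sec:proof_cc}, which keeps the exposition uniform. One minor bookkeeping point: your one-step inequality is indexed at $\u^{t}$ rather than $\u^{t+1}$, so when you plug into Lemma~\ref{lem:fund_cc} there is a harmless unit index shift to absorb.
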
 
 \begin{proof}
 By  applying Lemma \ref{lem:ippm} with $\meps^t =\eta \left(G^{t+1}_{\u}-G^{t}_{\u}\right)$, we have
\begin{align}\label{eq:ds_gda}
    \langle G_{\u}^{t+1} ,\u^{t+1}-\u\rangle \leq\ & \frac{1}{2\eta}\|\u-\u^{t}\|^2- \frac{1}{2\eta}\|\u-\u^{t+1}\|^2- \frac{1}{2\eta} \|\u^t-\u^{t+1}\|^2  +\langle G^{t+1}_{\u}-G^{t}_{\u},\u^{t+1}-\u\rangle.
\end{align} 
For the inner product term, we decompose and bound it as follows:
\begin{align}\label{eq:bd_dsgda}
     &\langle G^{t+1}_{\u}-G^{t}_{\u},\u^{t+1}-\u\rangle \notag\\
     =\ & \langle G^{t+1}_{\u_{\textrm{pd}}}-G^{t}_{\u_{\textrm{pd}}},\u_{\textrm{pd}}^{t+1}-\u_{\textrm{pd}}\rangle  +  \langle G^{t+1}_{\u_{\textrm{ex}}}-G^{t}_{\u_{\textrm{ex}}},\u_{\textrm{ex}}^{t+1}-\u_{\textrm{ex}}\rangle \notag\\
\leq \ &  \left(L+r\right) \left(\|\x^t-\x^{t+1}\|+\|\y^t-\y^{t+1}\|+\|\z^t-\z^{t+1}\|\right)\|\x^{t+1}-\x\|  \notag\\
&+ \left(L+r\right) \left(\|\x^t-\x^{t+1}\|+\|\y^t-\y^{t+1}\|+\|\v^t-\v^{t+1}\|\right)\|\y^{t+1}-\y\| \notag\\
&+r \left(\|\x^t-\x^{t+1}\|+\|\z^t-\z^{t+1}\|\right)\|\z^{t+1}-\z\|  \notag\\
&+r \left( \|\y^t-\y^{t+1}\|+\|\v^t-\v^{t+1}\|\right) \|\v^{t+1}-\v\| \notag\\
\leq\ &  \left(L+r\right) \left(\|\x^t-\x^{t+1}\|+\|\y^t-\y^{t+1}\|+\|\z^t-\z^{t+1}\|+\|\v^{t+1}-\v^t\|\right) \notag\\
&\times \left(\|\x^{t+1}-\x\|+\|\y^{t+1}-\y\|+\|\z^{t+1}-\z\|+\|\v^{t+1}-\v\|\right) \notag\\ 
\leq\ & 4\eta \left(L+r\right)^2 \|\u-\u^{t+1}\|^2+\frac{1}{4\eta}\|\u^{t+1}-\u^t\|^2,
\end{align}
where the inequality follows from \eqref{eq:bd_cs} and the Cauchy-Schwarz inequality. The last inequality is derived from the AM–GM inequality.
Plugging \eqref{eq:bd_dsgda} into \eqref{eq:ds_gda} and taking summation over two sides from $t=0$ to $T-1$ yields
\[
\begin{aligned}
    \frac{1}{T}\sum_{t=0}^{T-1} \langle G_{\u}^{t+1} ,\u^{t+1}-\u\rangle 
    \leq\ & \frac{1}{2\eta T}\|\u-\u^0\|^2+\frac{1}{T}\sum_{t=0}^{T-1} 4\eta \left(L+r\right)^2 \|\u-\u^{t+1}\|^2 
    \leq\  \mO(T^{-1/2}),
\end{aligned}
\]
where the last inequality is due to $\eta\leq \mO(T^{-1/2})$ and the boundedness of constraint sets.The proof is completed by invoking Lemma~\ref{lem:fund_cc}.
\end{proof}

When $\beta = \eta r$, the pairs $(\x, \z)$ and $(\y, \v)$ can be treated jointly as single primal and dual variables, respectively. Owing to the joint convexity-concavity of the function $F$ (see Lemma~\ref{lem:cc}), DS-GDA can be interpreted as a vanilla GDA method applied directly to $F$. To establish the tightness of the analysis in Theorem~\ref{theorem:dsgda}, it therefore suffices to show that the iteration complexity of vanilla GDA is tight, as demonstrated below.

\begin{theorem}[Tightness of GDA]\label{theorem:tight}
For any $T\ge 1$, let  $\{(\x^t,\y^t)\}_{t=0}^T$ generated by GDA with a step-size $\eta=\mO(T^{-1/2})$. Then, there exists a convex-concave smooth minimax problem instance such that, in order for some iterate $(\x^t,\y^t)$, with $t\leq T$, to be an $\epsilon$-saddle point of problem \eqref{eq:prob}, it is necessary that $T=\Omega(\epsilon^{-2})$. 
\end{theorem}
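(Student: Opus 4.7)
My plan is to exhibit the canonical bilinear instance $f(x,y) = xy$ on $\X = \Y = [-1,1]$, initialized at $(x^0, y^0) = (T^{-1/2}, 0)$, and to show that every GDA iterate along the resulting trajectory has duality gap at least $T^{-1/2}$. The function $f(x,y)=xy$ is convex-concave with $1$-Lipschitz gradient and has unique saddle point at the origin, so it meets the hypotheses of Theorem~\ref{theorem:tight}; letting the initial scale depend on $T$ is standard in oracle-style lower bound constructions.

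Once I verify that the projection never activates, the GDA recursion reduces to the linear map $(x^{t+1},y^{t+1}) = M(x^t,y^t)$ with
\[
M = \begin{pmatrix} 1 & -\eta \\ \eta & 1 \end{pmatrix} = \sqrt{1+\eta^2}\; R(\arctan \eta),
\]
where $R(\cdot)$ is the planar rotation. In particular $\|(x^t,y^t)\|^2 = (1+\eta^2)^t \, T^{-1}$ is monotone non-decreasing, so $\|(x^t,y^t)\|\ge T^{-1/2}$ for every $t\le T$. A direct computation shows the duality gap on $[-1,1]^2$ is $\max_{y'\in[-1,1]} xy' - \min_{x'\in[-1,1]} x'y = |x|+|y|$, and since $|x|+|y|\ge\sqrt{x^2+y^2}$, every iterate satisfies $|x^t|+|y^t|\ge T^{-1/2}$. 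Therefore, if some iterate with $t\le T$ is an $\epsilon$-saddle point, then $\epsilon\ge T^{-1/2}$, i.e.\ $T=\Omega(\epsilon^{-2})$, which is exactly the claim.

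The only non-routine step in this plan is verifying that the projection onto $[-1,1]^2$ remains inactive throughout, so that the clean linear dynamics are valid; this is the main obstacle. Using the hypothesis $\eta\le c_1 T^{-1/2}$ implicit in $\eta=\mO(T^{-1/2})$ together with $t\le T$, I get $(1+\eta^2)^t \le e^{\eta^2 t}\le e^{c_1^2}$, so $|x^t|,|y^t|\le \|(x^t,y^t)\| \le e^{c_1^2/2} T^{-1/2}$, which is less than $1$ for all sufficiently large $T$. Hence the trajectory stays strictly inside the box, the projection step is redundant, and the linear analysis is fully justified. Everything else in the argument reduces to elementary $2\times 2$ linear algebra and the inequality $|x|+|y|\ge\sqrt{x^2+y^2}$, both of which are routine.
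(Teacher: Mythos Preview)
Your proof is correct and takes a genuinely different route from the paper. The paper's hard instance is $f(x,y)=\tfrac12 x^2 y$ on $\R\times[0,1]$ with initialization $x^0=\sqrt{2\epsilon}$: there the $x$-iterate contracts multiplicatively by a factor in $[1-\eta,1]$ each step, so the duality gap $(x^t)^2/2$ can decay at most geometrically with rate $(1-\eta)^2$, and the conclusion is extracted from $\eta=\mO(T^{-1/2})$. You instead take the canonical bilinear instance $f(x,y)=xy$ on $[-1,1]^2$ with a $T$-dependent starting point $(T^{-1/2},0)$ and exploit the well-known fact that unprojected GDA on a bilinear game is a scaled rotation, so $\|(x^t,y^t)\|$ is \emph{non-decreasing}; the lower bound on the gap then follows in one line from $|x^t|+|y^t|\ge\|(x^t,y^t)\|\ge T^{-1/2}$. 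Your argument is more elementary and reaches the conclusion $T\ge\epsilon^{-2}$ directly, without any intermediate step linking $\eta$ to $\epsilon$. A side benefit of your choice is that both constraint sets are compact and the gradient is globally $1$-Lipschitz, so the instance sits squarely inside the paper's standing Assumption~\ref{ass:1}, whereas the paper's instance uses $\X=\R$ and a gradient that is only locally Lipschitz (along the bounded trajectory). The only caveat worth flagging is that both constructions let the initialization depend on a parameter ($T$ for you, $\epsilon$ for the paper); this is standard practice for oracle lower bounds and consistent with the theorem statement, which allows the instance to depend on $T$.
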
 
 
\begin{proof} 
Inspired by \citet{mahdavinia2022tight}, we consider the following hard instance 
    \[
        \min_{x\in \R } \max_{y\in [0,1]} f(x,y):= \frac{1}{2}x^2y.
    \] 
We first claim that if we choose $|x^0| \leq 1 $, then for any $t \geq 0$, it holds that $ |x^t| \leq 1 $. We prove this by induction.

\textbf{Base case:} When $t = 0$, the claim holds trivially by assumption.

\textbf{Inductive step:} Assume that $|x^k| \leq 1 $ for some $k \geq 0$. We now show that $|x^{k+1}| \leq 1$. To this end, observe that 
\[
x^{k+1}= x^k-\eta x^k y^k =\left(1-\eta y^k\right) x^k.
\]
Since $0\leq y^k\leq 1$, we have $0\leq 1-\eta y^k\leq 1$, which implies $|x^{k+1}|\leq 1$. 
Therefore, we conclude our claim is right. Next, we can bound $|x^t|$ as follows:
\[
|x^t|=\left|\prod_{k=1}^t \left(1-\eta y^k\right) x^0\right|\geq \left(1-\eta\right)^t|x^0|.
\]
Then, if we choose $x^0=\sqrt{2\epsilon}$, we have 
\begin{align*}
     \epsilon\geq \max_{y' \in [0,1]}f(x^t,y')-\min_{x' \in\R} f(x',y^t)&= \frac{(x^t)^2}{2} \geq \left(1-\eta\right)^{2t}\epsilon, \quad \forall t \leq T, 
\end{align*} 
which implies  
\[
T=\Omega\left(\frac{1}{\eta}\right)=\Omega \left(\frac{1}{\epsilon^2}\right).
\]
We finished our proof. 
\end{proof}

\section{Conclusion}\label{sec:conclusion}
In this paper, we propose a \emph{Doubly Smoothed Optimistic Gradient Descent Ascent} (DS-OGDA) method that achieves both \emph{universal applicability} and \emph{optimal achievability} for smooth minimax problems. With a single set of parameters, DS-OGDA attains an iteration complexity of $\mathcal{O}(\epsilon^{-2})$ for C-C problems and $\mathcal{O}(\epsilon^{-4})$ for general nonconvex minimax problems. Furthermore, when additional structural information is available, DS-OGDA can match the optimal complexity of $\mathcal{O}(\epsilon^{-1})$ in the C-C setting. For nonconvex problems, if a one-sided K\L{} exponent $\theta$ is known, DS-OGDA achieves an $\epsilon$-game stationary point with iteration complexity $\mathcal{O}(\epsilon^{-(4\theta - 2)_+ - 2})$, aligning with the best-known results.
We also demonstrate the necessity of this new algorithm by analyzing the suboptimal rate of DS-GDA in the C-C setting, where its upper-bound complexity has already been shown to be tight.

\section*{Acknowledgment}
Jiajin Li was supported by a Natural Sciences and Engineering Research Council of Canada Discovery Grant GR034865.
 
\bibliographystyle{abbrvnat}
\bibliography{references}

\begin{thebibliography}{46}
\providecommand{\natexlab}[1]{#1}
\providecommand{\url}[1]{\texttt{#1}}
\expandafter\ifx\csname urlstyle\endcsname\relax
  \providecommand{\doi}[1]{doi: #1}\else
  \providecommand{\doi}{doi: \begingroup \urlstyle{rm}\Url}\fi

\bibitem[Arjovsky et~al.(2017)Arjovsky, Chintala, and Bottou]{arjovsky2017wasserstein}
M.~Arjovsky, S.~Chintala, and L.~Bottou.
\newblock Wasserstein generative adversarial networks.
\newblock In \emph{Proceedings of the 34th International Conference on Machine Learning (ICML 2017)}, pages 214--223. PMLR, 2017.

\bibitem[Attouch et~al.(2010)Attouch, Bolte, Redont, and Soubeyran]{attouch2010proximal}
H.~Attouch, J.~Bolte, P.~Redont, and A.~Soubeyran.
\newblock Proximal alternating minimization and projection methods for nonconvex problems: An approach based on the {K}urdyka-{{\L}}ojasiewicz inequality.
\newblock \emph{Mathematics of Operations Research}, 35\penalty0 (2):\penalty0 438--457, 2010.

\bibitem[Attouch et~al.(2013)Attouch, Bolte, and Svaiter]{attouch2013convergence}
H.~Attouch, J.~Bolte, and B.~F. Svaiter.
\newblock Convergence of descent methods for semi-algebraic and tame problems: proximal algorithms, forward--backward splitting, and regularized gauss--seidel methods.
\newblock \emph{Mathematical Programming}, 137\penalty0 (1-2):\penalty0 91--129, 2013.

\bibitem[Ben-Tal et~al.(2009)Ben-Tal, El~Ghaoui, and Nemirovski]{ben2009robust}
A.~Ben-Tal, L.~El~Ghaoui, and A.~Nemirovski.
\newblock \emph{Robust Optimization}, volume~28.
\newblock Princeton University Press, 2009.

\bibitem[Bertsimas et~al.(2011)Bertsimas, Brown, and Caramanis]{bertsimas2011theory}
D.~Bertsimas, D.~B. Brown, and C.~Caramanis.
\newblock Theory and applications of robust optimization.
\newblock \emph{SIAM Review}, 53\penalty0 (3):\penalty0 464--501, 2011.

\bibitem[Blanchet et~al.(2024)Blanchet, Li, Lin, and Zhang]{blanchet2024distributionally}
J.~Blanchet, J.~Li, S.~Lin, and X.~Zhang.
\newblock Distributionally robust optimization and robust statistics.
\newblock \emph{arXiv preprint arXiv:2401.14655}, 2024.

\bibitem[B{\"o}hm(2023)]{bohm2023solving}
A.~B{\"o}hm.
\newblock Solving nonconvex-nonconcave min-max problems exhibiting weak minty solutions.
\newblock \emph{Transactions on Machine Learning Research}, 2023.
\newblock ISSN 2835-8856.

\bibitem[Bolte et~al.(2007)Bolte, Daniilidis, and Lewis]{bolte2007lojasiewicz}
J.~Bolte, A.~Daniilidis, and A.~Lewis.
\newblock The {{\L}}ojasiewicz inequality for nonsmooth subanalytic functions with applications to subgradient dynamical systems.
\newblock \emph{SIAM Journal on Optimization}, 17\penalty0 (4):\penalty0 1205--1223, 2007.

\bibitem[Cai and Zheng(2022)]{cai2022accelerated}
Y.~Cai and W.~Zheng.
\newblock Accelerated single-call methods for constrained min-max optimization.
\newblock In \emph{OPT 2022: Optimization for Machine Learning (NeurIPS 2022 Workshop)}, 2022.

\bibitem[Cai et~al.(2024)Cai, Oikonomou, and Zheng]{cai2024accelerated}
Y.~Cai, A.~Oikonomou, and W.~Zheng.
\newblock Accelerated algorithms for constrained nonconvex-nonconcave min-max optimization and comonotone inclusion.
\newblock In \emph{Proceedings of the 41st International Conference on Machine Learning (ICML 2024)}, pages 5312--5347. PMLR, 2024.

\bibitem[Chen et~al.(2021)Chen, Zhou, Xu, and Liang]{chen2021proximal}
Z.~Chen, Y.~Zhou, T.~Xu, and Y.~Liang.
\newblock Proximal gradient descent-ascent: Variable convergence under k{\l} geometry.
\newblock In \emph{Proceedings of the 9th International Conference on Learning Representations (ICLR 2021)}, 2021.

\bibitem[Dai et~al.(2018)Dai, Shaw, Li, Xiao, He, Liu, Chen, and Song]{dai2018sbeed}
B.~Dai, A.~Shaw, L.~Li, L.~Xiao, N.~He, Z.~Liu, J.~Chen, and L.~Song.
\newblock Sbeed: Convergent reinforcement learning with nonlinear function approximation.
\newblock In \emph{Proceedings of the 35th International Conference on Machine Learning (ICML 2018)}, pages 1125--1134. PMLR, 2018.

\bibitem[Daskalakis et~al.(2018)Daskalakis, Ilyas, Syrgkanis, and Zeng]{daskalakis2017training}
C.~Daskalakis, A.~Ilyas, V.~Syrgkanis, and H.~Zeng.
\newblock Training {GAN}s with optimism.
\newblock In \emph{Proceedings of the 6th International Conference on Learning Representations (ICLR 2018)}, 2018.

\bibitem[Diakonikolas et~al.(2021)Diakonikolas, Daskalakis, and Jordan]{diakonikolas2021efficient}
J.~Diakonikolas, C.~Daskalakis, and M.~Jordan.
\newblock Efficient methods for structured nonconvex-nonconcave min-max optimization.
\newblock In \emph{International Conference on Artificial Intelligence and Statistics}, pages 2746--2754. PMLR, 2021.

\bibitem[Ghadimi et~al.(2019)Ghadimi, Lan, and Zhang]{ghadimi2019generalized}
S.~Ghadimi, G.~Lan, and H.~Zhang.
\newblock Generalized uniformly optimal methods for nonlinear programming.
\newblock \emph{Journal of Scientific Computing}, 79:\penalty0 1854--1881, 2019.

\bibitem[Goodfellow et~al.(2020)Goodfellow, Pouget-Abadie, Mirza, Xu, Warde-Farley, Ozair, Courville, and Bengio]{goodfellow2020generative}
I.~Goodfellow, J.~Pouget-Abadie, M.~Mirza, B.~Xu, D.~Warde-Farley, S.~Ozair, A.~Courville, and Y.~Bengio.
\newblock Generative adversarial networks.
\newblock \emph{Communications of the ACM}, 63\penalty0 (11):\penalty0 139--144, 2020.

\bibitem[Jin et~al.(2020)Jin, Netrapalli, and Jordan]{jin2020local}
C.~Jin, P.~Netrapalli, and M.~Jordan.
\newblock What is local optimality in nonconvex-nonconcave minimax optimization?
\newblock In \emph{Proceedings of the 37th International Conference on Machine Learning (ICML 2020)}, pages 4880--4889. PMLR, 2020.

\bibitem[Kavis et~al.(2019)Kavis, Levy, Bach, and Cevher]{kavis2019unixgrad}
A.~Kavis, K.~Y. Levy, F.~Bach, and V.~Cevher.
\newblock Unixgrad: A universal, adaptive algorithm with optimal guarantees for constrained optimization.
\newblock In \emph{Advances in Neural Information Processing Systems}, volume~32, 2019.

\bibitem[Korpelevich(1976)]{korpelevich1976extragradient}
G.~M. Korpelevich.
\newblock The extragradient method for finding saddle points and other problems.
\newblock \emph{Matecon}, 12:\penalty0 747--756, 1976.

\bibitem[Kuhn et~al.(2019)Kuhn, Esfahani, Nguyen, and Shafieezadeh-Abadeh]{kuhn2019wasserstein}
D.~Kuhn, P.~M. Esfahani, V.~A. Nguyen, and S.~Shafieezadeh-Abadeh.
\newblock Wasserstein distributionally robust optimization: Theory and applications in machine learning.
\newblock In \emph{Operations research \& management science in the age of analytics}, pages 130--166. Informs, 2019.

\bibitem[Lan(2015)]{lan2015bundle}
G.~Lan.
\newblock Bundle-level type methods uniformly optimal for smooth and nonsmooth convex optimization.
\newblock \emph{Mathematical Programming}, 149\penalty0 (1):\penalty0 1--45, 2015.

\bibitem[Levy et~al.(2018)Levy, Yurtsever, and Cevher]{levy2018online}
K.~Y. Levy, A.~Yurtsever, and V.~Cevher.
\newblock Online adaptive methods, universality and acceleration.
\newblock In \emph{Advances in Neural Information Processing Systems}, volume~31, 2018.

\bibitem[Li et~al.(2025)Li, Zhu, and So]{li2022nonsmooth}
J.~Li, L.~Zhu, and A.~M.-C. So.
\newblock Nonsmooth nonconvex--nonconcave minimax optimization: Primal--dual balancing and iteration complexity analysis.
\newblock \emph{Mathematical Programming}, pages 1--51, 2025.

\bibitem[Li and Lan(2023)]{li2023simple}
T.~Li and G.~Lan.
\newblock A simple uniformly optimal method without line search for convex optimization.
\newblock \emph{arXiv preprint arXiv:2310.10082}, 2023.

\bibitem[Lin et~al.(2020{\natexlab{a}})Lin, Jin, and Jordan]{lin2020gradient}
T.~Lin, C.~Jin, and M.~Jordan.
\newblock On gradient descent ascent for nonconvex-concave minimax problems.
\newblock In \emph{Proceedings of the 37th International Conference on Machine Learning (ICML 2020)}, pages 6083--6093. PMLR, 2020{\natexlab{a}}.

\bibitem[Lin et~al.(2020{\natexlab{b}})Lin, Jin, and Jordan]{lin2020near}
T.~Lin, C.~Jin, and M.~I. Jordan.
\newblock Near-optimal algorithms for minimax optimization.
\newblock In \emph{Conference on Learning Theory}, pages 2738--2779. PMLR, 2020{\natexlab{b}}.

\bibitem[Lu(2022)]{lu2022s}
H.~Lu.
\newblock An $\mathcal{O}(s^r)$-resolution ode framework for understanding discrete-time algorithms and applications to the linear convergence of minimax problems.
\newblock \emph{Mathematical Programming}, 194\penalty0 (1):\penalty0 1061--1112, 2022.

\bibitem[Mahdavinia et~al.(2022)Mahdavinia, Deng, Li, and Mahdavi]{mahdavinia2022tight}
P.~Mahdavinia, Y.~Deng, H.~Li, and M.~Mahdavi.
\newblock Tight analysis of extra-gradient and optimistic gradient methods for nonconvex minimax problems.
\newblock In \emph{Advances in Neural Information Processing Systems}, pages 31213--31225, 2022.

\bibitem[Mertikopoulos et~al.(2019)Mertikopoulos, Lecouat, Zenati, Foo, Chandrasekhar, and Piliouras]{mertikopoulos2019optimistic}
P.~Mertikopoulos, B.~Lecouat, H.~Zenati, C.-S. Foo, V.~Chandrasekhar, and G.~Piliouras.
\newblock Optimistic mirror descent in saddle-point problems: Going the extra (gradient) mile.
\newblock In \emph{Proceedings of the 6th International Conference on Learning Representations (ICLR 2018)}, 2019.

\bibitem[Mokhtari et~al.(2020{\natexlab{a}})Mokhtari, Ozdaglar, and Pattathil]{mokhtari2020unified}
A.~Mokhtari, A.~Ozdaglar, and S.~Pattathil.
\newblock A unified analysis of extra-gradient and optimistic gradient methods for saddle point problems: Proximal point approach.
\newblock In \emph{International Conference on Artificial Intelligence and Statistics}, pages 1497--1507. PMLR, 2020{\natexlab{a}}.

\bibitem[Mokhtari et~al.(2020{\natexlab{b}})Mokhtari, Ozdaglar, and Pattathil]{mokhtari2020convergence}
A.~Mokhtari, A.~E. Ozdaglar, and S.~Pattathil.
\newblock Convergence rate of $\mathcal{O}(1/k)$ for optimistic gradient and extragradient methods in smooth convex-concave saddle point problems.
\newblock \emph{SIAM Journal on Optimization}, 30\penalty0 (4):\penalty0 3230--3251, 2020{\natexlab{b}}.

\bibitem[Nemirovski(2004)]{nemirovski2004prox}
A.~Nemirovski.
\newblock Prox-method with rate of convergence $\mathcal{O}(1/t)$ for variational inequalities with lipschitz continuous monotone operators and smooth convex-concave saddle point problems.
\newblock \emph{SIAM Journal on Optimization}, 15\penalty0 (1):\penalty0 229--251, 2004.

\bibitem[Nesterov(2015)]{nesterov2015universal}
Y.~Nesterov.
\newblock Universal gradient methods for convex optimization problems.
\newblock \emph{Mathematical Programming}, 152\penalty0 (1):\penalty0 381--404, 2015.

\bibitem[Nouiehed et~al.(2019)Nouiehed, Sanjabi, Huang, Lee, and Razaviyayn]{nouiehed2019solving}
M.~Nouiehed, M.~Sanjabi, T.~Huang, J.~D. Lee, and M.~Razaviyayn.
\newblock Solving a class of non-convex min-max games using iterative first order methods.
\newblock In \emph{Advances in Neural Information Processing Systems}, pages 14934--14942, 2019.

\bibitem[Omidshafiei et~al.(2017)Omidshafiei, Pazis, Amato, How, and Vian]{omidshafiei2017deep}
S.~Omidshafiei, J.~Pazis, C.~Amato, J.~P. How, and J.~Vian.
\newblock Deep decentralized multi-task multi-agent reinforcement learning under partial observability.
\newblock In \emph{Proceedings of the 34th International Conference on Machine Learning (ICML 2017)}, pages 2681--2690. PMLR, 2017.

\bibitem[Ostrovskii et~al.(2021)Ostrovskii, Lowy, and Razaviyayn]{ostrovskii2021efficient}
D.~M. Ostrovskii, A.~Lowy, and M.~Razaviyayn.
\newblock Efficient search of first-order nash equilibria in nonconvex-concave smooth min-max problems.
\newblock \emph{SIAM Journal on Optimization}, 31\penalty0 (4):\penalty0 2508--2538, 2021.

\bibitem[Pethick et~al.(2022)Pethick, Patrinos, Fercoq, Cevher{\aa}, et~al.]{pethick2022escaping}
T.~Pethick, P.~Patrinos, O.~Fercoq, V.~Cevher{\aa}, et~al.
\newblock Escaping limit cycles: Global convergence for constrained nonconvex-nonconcave minimax problems.
\newblock In \emph{Proceedings of the 10th International Conference on Learning Representations (ICLR 2022)}, 2022.

\bibitem[Rahimian and Mehrotra(2022)]{rahimian2019distributionally}
H.~Rahimian and S.~Mehrotra.
\newblock Frameworks and results in distributionally robust optimization.
\newblock \emph{Open Journal of Mathematical Optimization}, 3:\penalty0 1--85, 2022.

\bibitem[Rakhlin and Sridharan(2013)]{rakhlin2013online}
A.~Rakhlin and K.~Sridharan.
\newblock Online learning with predictable sequences.
\newblock In \emph{Conference on Learning Theory}, pages 993--1019. PMLR, 2013.

\bibitem[Thekumparampil et~al.(2019)Thekumparampil, Jain, Netrapalli, and Oh]{thekumparampil2019efficient}
K.~K. Thekumparampil, P.~Jain, P.~Netrapalli, and S.~Oh.
\newblock Efficient algorithms for smooth minimax optimization.
\newblock In \emph{Advances in Neural Information Processing Systems}, pages 12680--12691, 2019.

\bibitem[Xu et~al.(2023)Xu, Zhang, Xu, and Lan]{xu2023unified}
Z.~Xu, H.~Zhang, Y.~Xu, and G.~Lan.
\newblock A unified single-loop alternating gradient projection algorithm for nonconvex--concave and convex--nonconcave minimax problems.
\newblock \emph{Mathematical Programming}, 201\penalty0 (1):\penalty0 635--706, 2023.

\bibitem[Yang et~al.(2022)Yang, Orvieto, Lucchi, and He]{yang2022faster}
J.~Yang, A.~Orvieto, A.~Lucchi, and N.~He.
\newblock Faster single-loop algorithms for minimax optimization without strong concavity.
\newblock In \emph{International Conference on Artificial Intelligence and Statistics}, pages 5485--5517. PMLR, 2022.

\bibitem[Yurtsever et~al.(2015)Yurtsever, Tran~Dinh, and Cevher]{yurtsever2015universal}
A.~Yurtsever, Q.~Tran~Dinh, and V.~Cevher.
\newblock A universal primal-dual convex optimization framework.
\newblock In \emph{Advances in Neural Information Processing Systems}, volume~28, 2015.

\bibitem[Zhang et~al.(2020)Zhang, Xiao, Sun, and Luo]{zhang2020single}
J.~Zhang, P.~Xiao, R.~Sun, and Z.~Luo.
\newblock A single-loop smoothed gradient descent-ascent algorithm for nonconvex-concave min-max problems.
\newblock In \emph{Advances in Neural Information Processing Systems}, pages 7377--7389, 2020.

\bibitem[Zhang et~al.(2022)Zhang, Hong, and Zhang]{zhang2022lower}
J.~Zhang, M.~Hong, and S.~Zhang.
\newblock On lower iteration complexity bounds for the convex concave saddle point problems.
\newblock \emph{Mathematical Programming}, 194\penalty0 (1):\penalty0 901--935, 2022.

\bibitem[Zheng et~al.(2023)Zheng, Zhu, So, Blanchet, and Li]{zheng2023universal}
T.~Zheng, L.~Zhu, A.~M.-C. So, J.~Blanchet, and J.~Li.
\newblock Universal gradient descent ascent method for nonconvex-nonconcave minimax optimization.
\newblock In \emph{Advances in Neural Information Processing Systems}, pages 54075--54110, 2023.

\end{thebibliography}
\newpage
\section*{Appendix} 
\appendix

In the Appendix, we provide the missing proofs for nonconvex minimax problems. Our analysis builds on the framework developed in \citet{li2022nonsmooth,zheng2023universal}, and incorporates several key lemmas from \citet{zheng2023universal}. Below, we highlight only the components that are specific to our new algorithms. 

\section{Some Useful Lemmas}\label{APP:B}
\begin{lemma}\label{lem:6}
For any $t\ge0$, if $r_{\x},r_{\y}> L$, we have
 \vspace{1mm}
\begin{enumerate}[label=(\roman*)] 
\setlength{\itemsep}{1pt}
\item
 $  \|\x^{t+1}-\x(\y^{t},\z^{t},\v^{t})\|\leq    \sigma_6 \|\x^t-\x^{t+1}\|+\frac{r_{\x}+L}{r_{\x}-L}\|\x^t-\x^{t-1}\| + \frac{L}{r_{\x}-L} \|\y^t-\y^{t-1}\|\\ +\frac{r_{\x}}{r_{\x}-L}\|\z^t-\z^{t-1}\|$,  
 \item $\|\y^{t+1}-\y_+^{t}(\z^{t},\v^{t})\|\leq \eta_{\y}  L (\sigma_6+1)\|\x^{t+1}-\x^{t}\|+  \frac{2\eta_{\y} L r_{\x} }{r_{\x}-L}\|\x^t-\x^{t-1}\| \\ + \eta_{\y}\left(\frac{  Lr_{\x}}{r_{\x}-L}+r_{\y}\right) \|\y^t-\y^{t-1}\|   +\frac{\eta_{\y} Lr_{\x}}{r_{\x}-L}\|\z^t-\z^{t-1}\| + \eta_{\y}  r_{\y}\|\v^{t-1}-\v^{t}\| $,
    \end{enumerate}
    \vspace{1mm}
    where $\sigma_6=\tfrac{1+2\eta_{\x} r_{\x}}{\eta_{\x} (r_{\x}- L)}$ and $\y_{+}(\z,\v)=\proj_{\Y}(\y+\eta_{\y} \nabla_{\y}F(\x(\y,\z,\v),\y,\z,\v))$.
\end{lemma}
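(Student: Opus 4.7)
The plan is to bound each of the two distances by relating $\x^{t+1}$ (respectively $\y^{t+1}$) to the ``fixed-point'' quantities associated with $(\y^t,\z^t,\v^t)$, and then invoke Assumption~\ref{ass:1} to control gradient discrepancies across consecutive iterates. The bound in (i) feeds directly into the proof of (ii), so I attack the two parts in order.

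For part (i), I exploit the $(r_{\x}-L)$-strong convexity of $F(\cdot,\y^t,\z^t,\v^t)$, which holds because $r_{\x} > L$; here $\bar{\x}^t := \x(\y^t,\z^t,\v^t)$ is the unique minimizer over $\X$. Rewriting the primal update as $\x^{t+1} = \proj_{\X}(\x^t - \eta_{\x}(2G_{\x}^t - G_{\x}^{t-1}))$, the projection variational inequality at $\x=\bar{\x}^t$ yields $\langle \x^t - \x^{t+1}, \x^{t+1} - \bar{\x}^t\rangle \ge \eta_{\x}\langle 2G_{\x}^t - G_{\x}^{t-1}, \x^{t+1} - \bar{\x}^t\rangle$. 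After adding and subtracting $\nabla_{\x}F(\x^{t+1},\y^t,\z^t,\v^t)$ inside the right inner product, strong convexity together with the first-order optimality of $\bar{\x}^t$ gives $\langle \nabla_{\x}F(\x^{t+1},\y^t,\z^t,\v^t), \x^{t+1}-\bar{\x}^t\rangle \ge (r_{\x}-L)\|\x^{t+1}-\bar{\x}^t\|^2$. Rearranging and applying Cauchy--Schwarz produces
\[
\eta_{\x}(r_{\x}-L)\,\|\x^{t+1}-\bar{\x}^t\| \le \|\x^t - \x^{t+1}\| + \eta_{\x}\,\|2G_{\x}^t - G_{\x}^{t-1} - \nabla_{\x}F(\x^{t+1},\y^t,\z^t,\v^t)\|.
\]
The residual norm splits via the triangle inequality into $\|G_{\x}^t - \nabla_{\x}F(\x^{t+1},\y^t,\z^t,\v^t)\| \le (L+r_{\x})\|\x^t-\x^{t+1}\|$ and $\|G_{\x}^t - G_{\x}^{t-1}\| \le (L+r_{\x})\|\x^t-\x^{t-1}\| + L\|\y^t-\y^{t-1}\| + r_{\x}\|\z^t-\z^{t-1}\|$, using Assumption~\ref{ass:1} together with the explicit form $\nabla_{\x}F = \nabla_{\x}f + r_{\x}(\x-\z)$. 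Dividing by $\eta_{\x}(r_{\x}-L)$ and bounding $L+r_{\x}\le 2r_{\x}$ (valid since $r_{\x}\ge L$) identifies $\sigma_6$ and gives the stated bound.

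For part (ii), the argument is a pure non-expansive comparison. Writing $\nabla_{\y}F^\tau := \nabla_{\y}F(\x^\tau,\y^\tau,\z^\tau,\v^\tau)$, the updates are $\y^{t+1} = \proj_{\Y}(\y^t + \eta_{\y}(2\nabla_{\y}F^t - \nabla_{\y}F^{t-1}))$ and $\y_+^t(\z^t,\v^t) = \proj_{\Y}(\y^t + \eta_{\y}\nabla_{\y}F(\bar{\x}^t,\y^t,\z^t,\v^t))$. Non-expansiveness of $\proj_{\Y}$ gives $\|\y^{t+1}-\y_+^t(\z^t,\v^t)\| \le \eta_{\y}\|(\nabla_{\y}F^t - \nabla_{\y}F(\bar{\x}^t,\y^t,\z^t,\v^t)) + (\nabla_{\y}F^t - \nabla_{\y}F^{t-1})\|$. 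In the first difference only the $\x$-argument changes, so the $-r_{\y}(\y-\v)$ portion of $\nabla_{\y}F$ cancels and the norm is at most $L\|\x^t - \bar{\x}^t\|$; triangle-inequalizing $\|\x^t-\bar{\x}^t\| \le \|\x^t-\x^{t+1}\| + \|\x^{t+1}-\bar{\x}^t\|$ and substituting the bound from (i) produces the coefficient $\eta_{\y}L(\sigma_6+1)$ on $\|\x^t-\x^{t+1}\|$ plus additive contributions to the remaining history-term coefficients. The second difference is a direct Lipschitz estimate, contributing $L\|\x^t-\x^{t-1}\| + (L+r_{\y})\|\y^t-\y^{t-1}\| + r_{\y}\|\v^t-\v^{t-1}\|$. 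Collecting terms and simplifying (e.g., $L^2/(r_{\x}-L) + L = Lr_{\x}/(r_{\x}-L)$) reproduces the stated bound.

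The main obstacle will be arranging the strong-convexity argument in (i) so that the error bound depends only on the one-step increment $\|\x^t-\x^{t+1}\|$ and the previous-step increments, rather than on the uncomputable quantity $\|\x^t-\bar{\x}^t\|$. The key is to evaluate $\nabla_{\x}F$ at $\x^{t+1}$ (not at $\bar{\x}^t$) when isolating the residual, so that strong convexity supplies the factor $(r_{\x}-L)\|\x^{t+1}-\bar{\x}^t\|^2$ which Cauchy--Schwarz then cancels with one power of $\|\x^{t+1}-\bar{\x}^t\|$; if instead one evaluates the gradient at $\bar{\x}^t$, the Lipschitz constant contaminates the coefficients of the history terms and breaks the clean form of the bound. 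Once (i) is in this form, part (ii) is a clean triangle-inequality propagation.
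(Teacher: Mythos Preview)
Your proposal is correct and follows essentially the same route as the paper's proof: for part~(i) you combine the projection variational inequality for $\x^{t+1}$ with the first-order optimality of $\bar\x^t$ and the $(r_{\x}-L)$-strong convexity of $F(\cdot,\y^t,\z^t,\v^t)$, then split the residual via the same Lipschitz estimates; for part~(ii) you use non-expansiveness of $\proj_{\Y}$, the triangle inequality $\|\x^t-\bar\x^t\|\le\|\x^t-\x^{t+1}\|+\|\x^{t+1}-\bar\x^t\|$, and feed in part~(i)---exactly as the paper does. The only cosmetic difference is that the paper records the intermediate coefficient $\tfrac{1+\eta_{\x} L+\eta_{\x} r_{\x}}{\eta_{\x}(r_{\x}-L)}$ before relaxing $L+r_{\x}\le 2r_{\x}$ to obtain $\sigma_6$, whereas you do this relaxation in one step.
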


\begin{proof} 
We first prove part (i). From the update rule \[\x^{t+1}=\proj_{\X} \left(\x^t-2\eta_{\x}\nabla_{\x} F(\x^t,\y^t,\z^t,\v^t)  +\eta_{\x}\nabla_{\x} F(\x^{t-1},\y^{t-1},\z^{t-1},\v^{t-1}) \right),\] the optimality condition of the projection onto the convex set $\mathcal{X}$ implies  
\begin{align}\label{eq:eb_x}
    \langle\x^t-2\eta_{\x} \nabla_{\x}F(\x^t,\y^t,\z^t,\v^t)+\eta_{\x} \nabla_{\x}F(\x^{t-1},\y^{t-1},\z^{t-1},\v^{t-1})-\x^{t+1},  
    \x^{t+1}-\x(\y^{t},\z^{t},\v^{t})\rangle \geq 0.
\end{align}
Moreover, since $F(\cdot, \y,\z,\v)$ is  convex and $\x(\y^{t},\z^{t},\v^{t})=\argmin_{\x\in \X} F(\x,\y^t,\z^t,\v^t)$, we have 
\begin{align}
    \label{eq:eb_x_1}
    \eta_{\x} \langle \nabla_{\x} F(\x(\y^{t},\z^{t},\v^{t}),\y^{t},\z^{t},\v^{t}), \x^{t+1}-\x(\y^{t},\z^{t},\v^{t})\rangle \geq 0.
\end{align}
By adding \eqref{eq:eb_x} and \eqref{eq:eb_x_1}, we obtain 
\begin{equation}
\begin{aligned}
\langle &\x^t-\x^{t+1}+\eta_{\x} \nabla_{\x} F(\x^{t+1},\y^t,\z^t,\v^t)-2\eta_{\x} F(\x^t,\y^t,\z^t,\v^t)  +\eta_{\x} F(\x^{t-1},\y^{t-1},\z^{t-1},\v^{t-1}),\notag \\
    &\x^{t+1}-\x(\y^{t},\z^{t},\v^{t})\rangle \notag\\
    \geq\ & \eta_{\x} \langle \nabla_{\x} F(\x^{t+1},\y^t,\z^t,\v^t)- \nabla_{\x} F(\x(\y^{t},\z^{t},\v^{t}),\y^{t},\z^{t},\v^{t}), \x^{t+1}-\x(\y^{t},\z^{t},\v^{t})\rangle.
\end{aligned}
\end{equation}
By the $\left(r_{\x}-L\right)$-strong convexity of $F(\cdot,\y^t,\z^t,\v^t)$ and the Cauchy–Schwarz inequality, we obtain 
\begin{align*}
     &\eta_{\x} \left(r_{\x}-L\right) \|\x^{t+1}-\x(\y^{t},\z^{t},\v^{t})\|\\
     \leq\ & \|\x^t-\x^{t+1}\|   +\eta_{\x} \|\nabla_{\x} F(\x^{t+1},\y^t,\z^t,\v^t)- \nabla_{\x} F(\x^t,\y^t,\z^t,\v^t)\|\\
     & +\eta_{\x} \|\nabla_{\x} F(\x^t,\y^t,\z^t,\v^t)-\nabla_{\x}F(\x^{t-1},\y^{t-1},\z^{t-1},\v^{t-1}) \|\\
     \leq\ & \left(1+\eta_{\x} L+\eta_{\x} r_{\x}\right) \|\x^t-\x^{t+1}\|+\eta_{\x}\left(r_{\x}+L\right) \|\x^t-\x^{t-1}\|+\eta_{\x} L \|\y^t-\y^{t-1}\| +\eta_{\x} r_{\x}\|\z^t-\z^{t-1}\|, 
\end{align*}
where the second inequality follows from Assumption \ref{ass:1}.
Therefore, we have 
\begin{align*}
    \|\x^{t+1}-\x(\y^{t},\z^{t},\v^{t})\|\leq\ & \frac{1+\eta_{\x} L+\eta_{\x} r_{\x}}{\eta_{\x} \left(r_{\x}-L\right)} \|\x^t-\x^{t+1}\|+\frac{r_{\x}+L}{r_{\x}-L}\|\x^t-\x^{t-1}\|\\
    &+ \frac{L}{r_{\x}-L} \|\y^t-\y^{t-1}\|+\frac{r_{\x}}{r_{\x}-L}\|\z^t-\z^{t-1}\|.
\end{align*}
  Now consider part (ii).  By the definition of $\y_+^{t}(\z^{t},\v^{t})$ and the update rule of $\y^{t+1}$, we have
\begin{align}
    &\|\y^{t+1}-\y_+^{t}(\z^{t},\v^{t})\| \notag\\
    =\ & \|\proj_{\Y}(\y^t+2\eta_{\y} \nabla_{\y}F(\x^{t},\y^t,\z^t,\v^t)-\eta_{\y} \nabla_{\y}F(\x^{t-1},\y^{t-1},\z^{t-1},\v^{t-1}) ) \notag\\
    &-\proj_{\Y}(\y^t+\eta_{\y} \nabla_{\y}F(\x(\y^t,\z^t,\v^t),\y^t,\z^t,\v^t))\| \notag\\
    \leq\ & \eta_{\y}\|\nabla_{\y}F(\x^{t},\y^t,\z^t,\v^t)-\nabla_{\y}F(\x(\y^t,\z^t,\v^t),\y^t,\z^t,\v^t)\| \notag\\
    &+\eta_{\y}  \| \nabla_{\y} F(\x^{t},\y^{t},\z^{t},\v^{t})-\nabla_{\y} F(\x^{t-1},\y^{t-1},\z^{t-1},\v^{t-1})\|\notag\\ 
     \leq\ & \eta_{\y}  L  (\sigma_6+1)\|\x^{t+1}-\x^{t}\|+  \frac{2\eta_{\y} L  r_{\x} }{r_{\x}-L}\|\x^t-\x^{t-1}\|   + \eta_{\y}\left(\frac{  L r_{\x}}{r_{\x}-L }+r_{\y}\right) \|\y^t-\y^{t-1}\| \notag \\
     & +\frac{\eta_{\y} L r_{\x}}{r_{\x}-L }\|\z^t-\z^{t-1}\| + \eta_{\y}  r_{\y}\|\v^{t-1}-\v^{t}\| ,
\end{align}
where the first inequality follows from the nonexpansiveness of the projection operator, and the last inequality follows from Assumption~\ref{ass:1} and part~(i). 
\end{proof} 
By following the same procedure as in \citet[Lemmas 5 and 6]{zheng2023universal}, we obtain Lemma~\ref{lem:dec_1} and Lemma~\ref{lem:dec_2}, tailored to DS-OGDA. For simplicity, we omit the detailed proofs.
\begin{lemma}[Primal Descent]\label{lem:dec_1} For any $t\geq 0$, the following inequality holds:
    \begin{align*}
       & F(\x^{t},\y^{t},\z^{t},\v^{t})-F(\x^{t+1},\y^{t+1},\z^{t+1},\v^{t+1}) \notag\\
        \geq\   & \left(\frac{1}{\eta_{\x}}-\frac{2L+3r_{\x}}{2} \right)\|\x^t-\x^{t+1}\|^2  +\frac{r_{\y}-L}{2}\|\y^t-\y^{t+1}\|^2  +\frac{(1- \beta_{\x})r_{\x}}{ \beta_{\x}}\|\z^t-\z^{t+1}\|^2 \notag \\
        & + \frac{(\beta_{\y}-2)r_{\y}}{2\beta_{\y}}\|\v^t-\v^{t+1}\|^2    -\frac{3  (L+r_{\x})}{2 }\|\x^{t-1}-\x^t\|^2 -\frac{3 (L+r_{\x})}{2 }\|\y^{t-1}-\y^t\|^2 \notag \\
        &   -\frac{3 (L+r_{\x})}{2 }\|\z^{t-1}-\z^t\|^2 +\langle \nabla_{\y} F(\x^{t+1},\y^t,\z^{t+1},\v^{t+1}), \y^t- \y^{t+1}\rangle.
    \end{align*}
\end{lemma}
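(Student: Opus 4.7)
The plan is to decompose the one-step change of $F$ along a specific sequential order of updates and bound each block separately. Specifically, I would write
\begin{align*}
F(\x^{t},\y^{t},\z^{t},\v^{t})-F(\x^{t+1},\y^{t+1},\z^{t+1},\v^{t+1}) = \Delta_{\x}+\Delta_{\z}+\Delta_{\v}+\Delta_{\y},
\end{align*}
where the intermediate points are generated by updating $\x$ first, then $\z$, then $\v$, and finally $\y$. This ordering is chosen so that the gradient in the residual inner-product term for $\Delta_{\y}$ is automatically evaluated at $(\x^{t+1},\y^{t},\z^{t+1},\v^{t+1})$, matching the statement.

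For the $\x$-block, I would invoke the descent lemma for $\nabla_{\x}F$, which is $(L+r_{\x})$-Lipschitz, to get $\Delta_{\x}\geq \langle G_{\x}^{t},\x^{t}-\x^{t+1}\rangle - \tfrac{L+r_{\x}}{2}\|\x^{t}-\x^{t+1}\|^2$. The optimality condition of the projection applied to the OGDA update $\x^{t+1}=\proj_{\X}(\x^{t}-\eta_{\x}(2G_{\x}^{t}-G_{\x}^{t-1}))$ with test point $\x^{t}$ gives $\langle 2G_{\x}^{t}-G_{\x}^{t-1},\x^{t}-\x^{t+1}\rangle \geq \tfrac{1}{\eta_{\x}}\|\x^{t}-\x^{t+1}\|^2$; substituting yields a leading term of $(\tfrac{1}{\eta_{\x}}-\tfrac{L+r_{\x}}{2})\|\x^{t}-\x^{t+1}\|^2$ plus the inner product $\langle G_{\x}^{t-1}-G_{\x}^{t},\x^{t}-\x^{t+1}\rangle$. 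Lipschitz continuity of $\nabla_{\x}F$ and Cauchy--Schwarz/AM--GM on this residual produce the $\|\x^{t-1}-\x^{t}\|^2$, $\|\y^{t-1}-\y^{t}\|^2$, and $\|\z^{t-1}-\z^{t}\|^2$ error terms (with coefficients $\tfrac{3(L+r_{\x})}{2}$ after a symmetric split) and an additional $-\tfrac{L+r_{\x}}{2}\|\x^{t}-\x^{t+1}\|^2$ correction that I would track carefully.

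For the $\z$- and $\v$-blocks, I would use the explicit form $\z^{t+1}-\z^{t}=\beta_{\x}(\x^{t}-\z^{t})$ (so $\|\x^{t}-\z^{t}\|^2=\tfrac{1}{\beta_{\x}^2}\|\z^{t+1}-\z^{t}\|^2$) and expand
\[
\Delta_{\z}=\tfrac{r_{\x}}{2}\|\x^{t+1}-\z^{t}\|^2-\tfrac{r_{\x}}{2}\|\x^{t+1}-\z^{t+1}\|^2 = r_{\x}\langle \x^{t+1}-\x^{t},\z^{t+1}-\z^{t}\rangle+\tfrac{r_{\x}(2-\beta_{\x})}{2\beta_{\x}}\|\z^{t+1}-\z^{t}\|^2,
\]
then apply AM--GM $r_{\x}\langle \x^{t+1}-\x^{t},\z^{t+1}-\z^{t}\rangle\geq -\tfrac{r_{\x}}{2}\|\x^{t+1}-\x^{t}\|^2-\tfrac{r_{\x}}{2}\|\z^{t+1}-\z^{t}\|^2$. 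This yields exactly $\tfrac{(1-\beta_{\x})r_{\x}}{\beta_{\x}}\|\z^{t+1}-\z^{t}\|^2$ plus a $-\tfrac{r_{\x}}{2}\|\x^{t+1}-\x^{t}\|^2$ which, added to the $\x$-block contribution, produces the stated leading coefficient $\tfrac{1}{\eta_{\x}}-\tfrac{2L+3r_{\x}}{2}$. An analogous but cleaner identity for $\v$ (no cross term survives because $F$ depends on $\v$ only through $-\tfrac{r_{\y}}{2}\|\y-\v\|^2$ with $\y=\y^{t}$ frozen) gives $\Delta_{\v}=\tfrac{r_{\y}(\beta_{\y}-2)}{2\beta_{\y}}\|\v^{t+1}-\v^{t}\|^2$ directly.

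Finally, for the $\y$-block I would invoke the $(r_{\y}-L)$-strong concavity of $F(\x^{t+1},\cdot,\z^{t+1},\v^{t+1})$, which gives
\[
\Delta_{\y}\geq \langle \nabla_{\y}F(\x^{t+1},\y^{t},\z^{t+1},\v^{t+1}),\y^{t}-\y^{t+1}\rangle+\tfrac{r_{\y}-L}{2}\|\y^{t}-\y^{t+1}\|^2,
\]
and I would keep the inner product term unbounded, as is done in the statement (it will be absorbed later by the dual-ascent analysis of the Lyapunov function). Summing the four blocks and collecting coefficients produces the claimed inequality. The main obstacle I anticipate is the coefficient bookkeeping for the $\x$-block: the OGDA gradient-difference term must be split by AM--GM in a way that simultaneously leaves the correct $\|\x^{t}-\x^{t+1}\|^2$ coefficient after absorbing the $-\tfrac{r_{\x}}{2}$ contribution from $\Delta_{\z}$, and produces the precise $\tfrac{3(L+r_{\x})}{2}$ weights on the lagged differences $\|\x^{t-1}-\x^{t}\|^2$, $\|\y^{t-1}-\y^{t}\|^2$, $\|\z^{t-1}-\z^{t}\|^2$---this requires choosing the AM--GM weights consistently across the three Lipschitz contributions rather than picking them independently.
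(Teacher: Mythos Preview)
Your proposal is correct and follows essentially the same approach as the paper, which defers to the procedure of \citet[Lemma~5]{zheng2023universal} adapted to DS-OGDA. The decomposition order $\x\to\z\to\v\to\y$, the descent-lemma/projection argument for $\Delta_{\x}$ with the extrapolation residual handled via Lipschitz and AM--GM (weight $1/3$ on $\|\x^t-\x^{t+1}\|^2$, weight $3$ on each lagged difference), the exact quadratic computations for $\Delta_{\z}$ and $\Delta_{\v}$, and the strong-concavity bound for $\Delta_{\y}$ all reproduce the stated coefficients precisely.
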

 \begin{lemma}[Dual Ascent]\label{lem:dec_2} For any $t\geq 0$, the following inequality holds:
   \[
   \begin{aligned}
       & d(\y^{t+1},\z^{t+1},\v^{t+1})- d(\y^t,\z^t,\v^t)\\
       \geq\ &  \frac{(2-\beta_{\y})r_{\y}}{2\beta_{\y}}\|\v^{t+1}-\v^t\|^2+ \frac{r_{\x}}{2}\langle \z^{t+1}+\z^t-2\x(\y^{t+1},\z^{t+1},\v^{t+1}),  \z^{t+1}-\z^t \rangle\\
       &+ \langle \nabla_{\y} F(\x(\y^t,\z^t,\v^{t+1}),\y^t,\z^t,\v^{t+1}),\y^{t+1}-\y^t \rangle-\frac{L_d}{2}\|\y^{t+1}-\y^t\|^2,
   \end{aligned}
   \]
   where $L_d\coloneqq L \sigma_1+L +r_{\y}$ and $\sigma_1=\tfrac{r_{\x}}{r_{\x}-L}$.
\end{lemma}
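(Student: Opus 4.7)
}
The plan is to split the total change of $d$ into three ``one-variable-at-a-time'' increments that mirror the three terms on the right-hand side, and then bound each increment separately with a different tool. Specifically, I would introduce the intermediate points $(\y^t,\z^t,\v^{t+1})$ and $(\y^{t+1},\z^t,\v^{t+1})$ and write
\begin{align*}
d(\y^{t+1},\z^{t+1},\v^{t+1}) - d(\y^t,\z^t,\v^t)
&= \bigl[d(\y^t,\z^t,\v^{t+1}) - d(\y^t,\z^t,\v^t)\bigr] \\
&\quad + \bigl[d(\y^{t+1},\z^t,\v^{t+1}) - d(\y^t,\z^t,\v^{t+1})\bigr] \\
&\quad + \bigl[d(\y^{t+1},\z^{t+1},\v^{t+1}) - d(\y^{t+1},\z^t,\v^{t+1})\bigr].
\end{align*}
The three blocks will produce, respectively, the $\v$-quadratic, the $\y$-smooth-descent contribution, and the $\z$-inner-product term.

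For the $\v$-block, I would exploit that the definition of $d(\y,\z,\v)=\min_{\x\in\X}F(\x,\y,\z,\v)$ involves $\v$ only through the decoupled quadratic $-\tfrac{r_{\y}}{2}\|\y-\v\|^2$, which can be pulled out of the minimization. The change therefore equals $\tfrac{r_{\y}}{2}\bigl(\|\y^t-\v^t\|^2-\|\y^t-\v^{t+1}\|^2\bigr)$, and using the smoothing rule $\v^{t+1}=\v^t+\beta_{\y}(\y^t-\v^t)$ to substitute $\y^t-\v^t = \beta_{\y}^{-1}(\v^{t+1}-\v^t)$ and $\y^t-\v^{t+1} = \beta_{\y}^{-1}(1-\beta_{\y})(\v^{t+1}-\v^t)$, a direct computation yields $\tfrac{(2-\beta_{\y})r_{\y}}{2\beta_{\y}}\|\v^{t+1}-\v^t\|^2$. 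For the $\z$-block, I would invoke the definitional inequality: letting $\x^\star\coloneqq\x(\y^{t+1},\z^{t+1},\v^{t+1})$, we have $d(\y^{t+1},\z^{t+1},\v^{t+1}) = F(\x^\star,\y^{t+1},\z^{t+1},\v^{t+1})$ while $d(\y^{t+1},\z^t,\v^{t+1}) \leq F(\x^\star,\y^{t+1},\z^t,\v^{t+1})$. Subtracting produces a lower bound $\tfrac{r_{\x}}{2}\bigl(\|\x^\star-\z^{t+1}\|^2-\|\x^\star-\z^t\|^2\bigr)$, which the polarization identity $\|a\|^2-\|b\|^2=\langle a+b,a-b\rangle$ rewrites as exactly $\tfrac{r_{\x}}{2}\langle\z^{t+1}+\z^t-2\x^\star,\z^{t+1}-\z^t\rangle$.

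For the $\y$-block, the strategy is to treat $\y \mapsto d(\y,\z^t,\v^{t+1})$ as a smooth (but possibly nonconvex) function and apply the descent lemma in its lower-bound form:
\[
d(\y^{t+1},\z^t,\v^{t+1})-d(\y^t,\z^t,\v^{t+1}) \geq \langle\nabla_{\y}d(\y^t,\z^t,\v^{t+1}),\y^{t+1}-\y^t\rangle-\tfrac{L_d}{2}\|\y^{t+1}-\y^t\|^2.
\]
Since $r_{\x}>L$ makes $F(\cdot,\y,\z,\v)$ $(r_{\x}-L)$-strongly convex in $\x$, the argmin $\x(\y,\z,\v)$ is unique, and Danskin's theorem identifies $\nabla_{\y}d(\y,\z,\v)=\nabla_{\y}F(\x(\y,\z,\v),\y,\z,\v)$, matching the third term on the right-hand side of the lemma. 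The main obstacle is verifying the $L_d$-smoothness of $\y\mapsto d(\y,\z,\v)$: this reduces to the standard stability estimate $\|\x(\y_1,\z,\v)-\x(\y_2,\z,\v)\|\leq \tfrac{L}{r_{\x}-L}\|\y_1-\y_2\|$ (which follows from strong convexity in $\x$ combined with the joint $L$-Lipschitzness of $\nabla_{\x}f$ in Assumption \ref{ass:1}), and then propagating this bound through $\nabla_{\y}f$ and the linear term $-r_{\y}(\y-\v)$ to obtain a Lipschitz constant no larger than $L\sigma_1+L+r_{\y}=L_d$.

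Summing the three blocks directly gives the desired inequality; no cross-terms remain because the decomposition is chosen so that each bound lines up verbatim with one of the three terms on the right-hand side.
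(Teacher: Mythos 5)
Your proposal is correct and follows essentially the same route as the paper, which itself omits the details and defers to the analogous dual-ascent lemma of \citet{zheng2023universal}: the three-way decomposition in the order $\v\to\y\to\z$, the exact quadratic computation for the $\v$-block via the smoothing update, the definitional inequality plus polarization for the $\z$-block, and Danskin plus the $(r_{\x}-L)$-strong-convexity stability estimate to get $L_d$-smoothness of $\y\mapsto d(\y,\z,\v)$ for the descent-lemma lower bound. All three blocks line up with the stated terms (including the evaluation points $(\y^t,\z^t,\v^{t+1})$ and $\x(\y^{t+1},\z^{t+1},\v^{t+1})$), and your Lipschitz constant $L\sigma_1+r_{\y}\leq L_d$ is consistent with the paper's definition.
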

\section{Proof of Proposition \ref{proposition:suff_dec}}\label{APP:D}
\begin{proof}
The overall argument parallels that of \citet[Theorem 1]{zheng2023universal}; the only changes required for DS-OGDA appear in Lemmas~\ref{lem:6}, \ref{lem:dec_1}, and~\ref{lem:dec_2}. The detailed derivation is omitted, and we directly obtain:
\begin{align}\label{eq:suff_mid}
   \Phi^t-\Phi^{t+1}  
    \geq\ &\left(s_1^{\x}-5\eta_{\y} ^2 L^2 (\sigma_6+1)^2 s_1^{\y}\right)  \|\x^t-\x^{t+1}\|^2+ \left(s_2^{\x}- \frac{20\eta_{\y}^2 L^2 r_{\x}^2 }{\left(r_{\x}-L\right)^2} s_1^{\y}\right) \|\x^t-\x^{t-1}\|^2\notag \\
    &+\left(\frac{s_1^{\y}}{2}- 2\beta_{\y}^2\sigma_8^2s_1^{\v}-6r_{\x}\kappa\sigma_1^2\sigma_8^2\right)\|\y^t-\y_+^t(\z^t,\v^t)\|^2 \notag\\
    & + \left(s_2^{\y}- 5\eta_{\y}^2\left(\frac{  Lr_{\x}}{r_{\x}-L}+r_{\y}\right)^2 s_1^{\y}\right) \|\y^t-\y^{t-1}\|^2\notag\\
     &+\left(s_1^{\z}-2\beta_{\y}^2\sigma_3^2 s_1^{\v}\right) \|\z^t-\z^{t+1}\|^2+ \left(s_2^{\z}- \frac{5\eta_{\y}^2 L^2 r_{\x}^2 }{\left(r_{\x}-L\right)^2}s_1^{\y}\right) \|\z^t-\z^{t-1}\|^2   \notag\\
    &  +  \left(\frac{s_1^{\v}}{2}-12r_{\x}\kappa \sigma_1^2\sigma_5^2\right)\|\v^t-\v_+^t(\z^{t+1})\|^2 + \left(s_2^{\v}-5\eta_{\y} ^2r_{\y}^2s_1^{\y}\right)  \|\v^t-\v^{t-1}\|^2 \notag\\
    &- 2r_{\x}\kappa
          \|\x(\z^{t+1},\v(\z^{t+1}))-\x(\z^{t+1},\v_+^t(\z^{t+1}))\|^2, 
\end{align}
where $\kappa>0$ is any positive constant, and the coefficients are defined as follows:
\begin{alignat*}{2}
    &\ \ \sigma_1=\sigma_2  = \frac{r_x}{r_x - L}, 
    &\quad
    &s_1^{\x}  = \frac{1}{2\eta_{\x}}-\frac{6L +3r_{\x}+2r_{\y}}{2}, \\
   &\ \ \sigma_3  = \frac{r_{\x}\sigma_1}{r_{\y}-L }+1,
    &\quad
   &s_1^{\z}  = \frac{\left(1-\beta_{\x}(2+4\sigma_2)\right)r_{\x}}{2\beta_{\x}}-\frac{r_{\x}}{\kappa}-12r_{\x}\kappa \sigma_1^2\sigma_3^2,  \\
   &\ \ \sigma_5  = \frac{r_{\y}}{r_{\y}-L},
    &\quad
    &s_1^{\v}  = \frac{\left(1-\beta_{\y}\right)r_{\y}}{2\beta_{\y}}-L-r_{\y}, \\
  & \ \ L_d  = L \sigma_1+L +r_{\y},
    &\quad
    &s_2^{\x}  = \frac{1}{2\eta_{\x}}-\frac{3(2L+r_{\x}+r_{\y})}{2}-\frac{L\left(r_{\x}+L\right)}{r_{\x}-L}, \\
    &\ \ \sigma_8  = \frac{1+\eta_{\y} L_d}{\eta_{\y}  (r_{\y}-L)},
    &\quad
    &s_2^{\z} =\frac{r_{\x}}{2\beta_{\x}}-\frac{3(L+r_{\x})}{2}-\frac{L r_{\x} }{r_{\x}-L},  
\end{alignat*}
\begin{align*}
    &s_2^{\y} = \frac{1}{2\eta_{\y}}-\frac{3(2L+r_{\x}+r_{\y})}{2}-\frac{L^2}{r_{\x}-L},  \quad s_2^{\v} =\frac{r_{\y}}{2\beta_{\y}}-\frac{3(L+r_{\y})}{2},\\
    &s_1^{\y} =\frac{1 }{2\eta_{\y} }-L_d-L\sigma_6^2-\frac{r_{\y}+3L}{2}-6r_{\x} \kappa \sigma_1^2-\frac{2L\left(r_{\x}+L\right)}{r_{\x}-L}.
\end{align*}
We now turn to a detailed step size analysis. Under the assumption $r_{\x}\geq 2L$ and $r_{\y}\geq 2L$, the step size choices specified in Condition \ref{cond:step} can be utilized to ensure all coefficients remain  bounded from below:
   \begin{align*}
       &  \eta_{\x}\leq  \frac{1}{6\left(4L+r_{\x} +r_{\y}\right)} \Rightarrow \   s_2^{\x}\geq \frac{1}{4\eta_{\x}}, \\
       &\eta_{\y} \leq \frac{1}{ 6\left(3L+r_{\x} +r_{\y}\right)} \Rightarrow \  
        s_2^{\y} \geq \frac{1}{4\eta_{\y}}, \\
        &\beta_{\x} \leq \frac{r_{\x}}{8\left(L+r_{\x}+L\right)} \Rightarrow \  
        s_2^{\z}\geq  \frac{r_{\x}}{4\beta_{\x}}, \\
       & \beta_{\y}\leq \frac{r_{\y}}{6\left(L+r_{\y}\right)} \Rightarrow \  s_2^{\v}\geq \frac{r_{\y}}{4\beta_{\y}} \, \text{and } \, 
         s_1^{\v} \geq \frac{r_{\y}}{4\beta_{\y}},
   \end{align*}
   where the first implication follows from $\tfrac{ r_{\x}+L }{r_{\x}-L}\leq 3$ and the second implication is derived by $\tfrac{L }{r_{\x}-L}\leq 1$. 
Moreover, due to $r_{\x}\geq 2L$ and $r_{\y}\geq 2L$, we have 
\[\sigma_1 = \sigma_2 = \frac{L}{r_{\x} - L} + 1 \leq 2,  \sigma_3 = \frac{r_{\x} \sigma_1}{r_{\y} - L}+1\leq \frac{2r_{\x}}{L} + 1, \text{and } \sigma_5 = \frac{r_y}{r_y - L} \leq 2. \] 
Then, we set $\kappa=4\beta_{\x}$. Using again the step size conditions in Condition~\ref{cond:step}, specifically, $\beta_{\x}\leq \tfrac{\eta_{\y}  L^2}{7680r_{\x} } $ and $\beta_{\y}\leq \tfrac{r_{\y}}{2L^2}$, we obtain the coefficient before the term $\|\v^t-\v_+^t(\z^{t+1})\|^2$,  
    \begin{align*}
         \frac{s_1^{\v}}{2}-12r_{\x}\kappa\sigma_1^2\sigma_5^2\geq\ & \frac{r_{\y}}{8\beta_{\y}}-768r_{\x}\beta_{\x} \geq \frac{r_{\y}}{8\beta_{\y}}-\frac{\eta_{\y} L^2}{10}   \geq \frac{r_{\y}}{16\beta_{\y}}. 
    \end{align*}    
    We next bound the coefficient preceding the term  $\|\z^t-\z^{t+1}\|^2$, i.e., 
   \begin{align*}
        s_1^{\z}-2\beta_{\y}^2\sigma_3^2 s_1^{\v}   =\ & \frac{\left(1-\beta_{\x}(2+4\sigma_2)\right)r_{\x}}{2\beta_{\x}}-\frac{r_{\x}}{4\beta_{\x}}-48r_{\x}\beta_{\x}\sigma_1^2\sigma_3^2-2\beta_{\y}^2\sigma_3^2 s_1^{\v} \notag\\
       \geq\ &\frac{r_{\x}}{2\beta_{\x}}\left(\frac{1}{2}-10\beta_{\x} \right) -48r_{\x}\beta_{\x}\sigma_1^2\sigma_3^2- r_{\y}\beta_{\y} \sigma_3^2 \notag\\
       \geq\ &\frac{r_{\x}}{2\beta_{\x}}\left(\frac{1}{2}-10\beta_{\x} \right) -\left(192r_{\x}\beta_{\x} +r_{\y}\beta_{\y}\right)  \left(\frac{4r_{\x}^2 }{L^2}+1+\frac{4r_{\x} }{L}\right) \notag\\
\geq\ &\frac{r_{\x}}{2\beta_{\x}}\left(\frac{1}{2}-10\beta_{\x} \right)-\frac{3\eta_{\y} L^2}{80}\left(\frac{4r_{\x}^2 }{L^2}+1+\frac{4r_{\x} }{L}\right)  \geq \frac{r_{\x}}{8\beta_{\x}},        
    \end{align*}
 where the first inequality follows from $\sigma_2\leq 2$ and $\beta_{\y}^2s_1^{\v}\leq \tfrac{r_{\y}\beta_{\y}}{2}$, the second one is due to the definition of $\sigma_3$ and $\sigma_1\leq 2$, the third one follows from the step size conditions in Condition \ref{cond:step}, i.e., $\beta_{\x}\leq \tfrac{\eta_{\y}  L^2}{7680r_{\x} } $  and $\beta_{\y}\leq \tfrac{\eta_{\y}  L^2}{240r_{\y} }$, and the last one is a consequence of the condition $\beta_{\x} \leq \tfrac{2r_{\x}}{80r_{\x} +6r_{\x}^2 +3L^2+12r_{\x}L}$  in Condition~\ref{cond:step}.

We proceed to bound the coefficient of $\|\y^t-\y_+^t(\z^t,\v^t)\|^2$ by first estimating  $s_1^{\y}$ as follows: 
\begin{align}\label{eq:s1y}
    s_1^{\y}=\ &\frac{1 }{2\eta_{\y} }-L_d-L\sigma_6^2-\frac{r_{\y}+3L}{2}-\frac{2L\left(r_{\x}+L\right)}{r_{\x}-L}-24r_{\x} \beta_{\x} \sigma_1^2 \notag\\
    \geq \ & \frac{1}{6\eta_{\y} } -96r_{\x}\beta_{\x} 
    \geq\  \frac{1}{6\eta_{\y}}-\frac{\eta_{\y} L^2}{80}\geq \frac{1}{12\eta_{\y}}.
\end{align}
Here, the first inequality is derived by the step size conditions in Condition~\ref{cond:step}, i.e., $\tfrac{1}{\eta_{\y} }\geq \max\{6L(\sigma_6+1)^2 , 3(2L_d+r_{\y}+15L)\}$, which implies $\tfrac{1}{2\eta_{\y} }-L(\sigma_6+1)^2\geq \tfrac{1}{3\eta_{\y} }$ and $\tfrac{1}{3\eta_{\y} }-L_d-\tfrac{r_{\y}+3L}{2}-\tfrac{2L\left(r_{\x}+L\right)}{r_{\x}-L}\geq\tfrac{1}{3\eta_{\y} }-L_d-\tfrac{r_{\y}+15L}{2} \geq  \tfrac{1}{6\eta_{\y} }$; The second inequality follows from the step size conditions in Condition~\ref{cond:step}, i.e., $\beta_{\x}\leq \tfrac{\eta_{\y}  L^2}{7680r_{\x} } $; The last inequality follows from the condition $\eta_{\y} \leq \sqrt{\tfrac{20}{3}}\tfrac{1}{L}$ implied by Condition~\ref{cond:step}. Secondly, we bound $\sigma_8$ as below: 
\begin{align}
    \label{eq:sigma_8}
    \sigma_8=\frac{\frac{1}{\eta_{\y} }+L_d}{r_{\y}-L}\leq \frac{2}{\eta_{\y}  r_{\y}}+\frac{L \sigma_1+L +r_{\y}}{r_{\y}-L}\leq \frac{2}{\eta_{\y}  r_{\y}}+\frac{2(3L+r_{\y})}{r_{\y}}=\frac{2}{\eta_{\y}  r_{\y}}+5.
\end{align}
We are now in position to bound the complete coefficient of $\|\y^t-\y_+^t(\z^t,\v^t)\|^2$:  
\begin{align*}
     \frac{s_1^{\y}}{2}- 2\beta_{\y}^2\sigma_8^2s_1^{\v}-24r_{\x}\beta_{\x}\sigma_1^2\sigma_8^2 
    \geq\ &\frac{1}{24\eta_{\y} } -\left(r_{\y}\beta_{\y}+96r_{\x}\beta_{\x}\right)  \left(\frac{4}{\eta_{\y} ^2 r_{\y}^2}+25+\frac{20}{\eta_{\y}  r_{\y}}\right)\\
    \geq\ & \frac{1}{24\eta_{\y} }-\frac{\eta_{\y}  L^2}{60}\left(\frac{4}{\eta_{\y} ^2 r_{\y}^2}+25+\frac{20}{\eta_{\y}  r_{\y}}\right)\geq \frac{1}{60\eta_{\y}},
\end{align*}
where the first inequality follows from \eqref{eq:s1y}, \eqref{eq:sigma_8}, and the bound $\beta_{\y}^2s_1^{\v}\leq \tfrac{r_{\y}\beta_{\y}}{2}$; The second inequality is derived by the step size conditions in Condition~\ref{cond:step}, i.e., $\beta_{\x}\leq \tfrac{\eta_{\y}  L^2}{7680r_{\x} } $  and $\beta_{\y}\leq \tfrac{\eta_{\y}  L^2}{240r_{\y} }$; The last inequality uses the assumption $r_{\y}\geq 2L_{\y}$ and  the step size condition $\eta_{\y}\leq \tfrac{1}{20(5L^2 +2L)}$ in Condition~\ref{cond:step}. 

Finally, we bound the remaining coefficients appearing in other terms. These bounds  follow directly from the step size conditions in Condition~\ref{cond:step} and the earlier estimate $s_1^{\y}\leq \tfrac{1}{2\eta_{\y}}$. It is then straightforward to verify that:
\begin{align*}
     &\eta_{\x}\leq \frac{1}{2(7L +3r_{\x} +2r_{\y})} \Rightarrow \s_1^{\x}-5\eta_{\y} ^2 L^2 (\sigma_6+1)^2 s_1^{\y} \geq\frac{1}{4\eta_{\x}}\\
    &\eta_{\y} \leq  \frac{1}{ 4\sqrt{5}r_{\y}} \Rightarrow\   s_2^{\y}- 5\eta_{\y}^2\left(\frac{  Lr_{\x}}{r_{\x}-L}+r_{\y}\right)^2 s_1^{\y} \geq \frac{1}{8\eta_{\y}},\\
    & \beta_{\y}\leq \frac{1}{32r_{\y}\eta_{\y}} \Rightarrow \  s_2^{\v}-5\eta_{\y} ^2 r_{\y}^2 s_1^{\y} \geq \frac{r_{\y}}{8\beta_{\y}},\\
    & \beta_{\x}\leq \frac{r_{\x}}{24r_{\y}^2\eta_{\y}} \Rightarrow \   s_2^{\z}- \frac{5\eta_{\y}^2 L^2 r_{\x}^2 }{\left(r_{\x}-L\right)^2}s_1^{\y}\geq \frac{r_{\x}}{8\beta_{\x}},\\
    & \eta_{\x}\leq \frac{1}{320r_{\y}^2\eta_{\y}}   \Rightarrow \    s_2^{\x}-\frac{20\eta_{\y}^2 L^2 r_{\x}^2 }{\left(r_{\x}-L\right)^2} s_1^{\y}\geq  \frac{1}{8\eta_{\x}},
\end{align*}
where the first implication uses the condition $\tfrac{1}{\eta_{\y} }\geq  6L(\sigma_6+1)^2$ in Condition~\ref{cond:step}.

Putting everything together yields \eqref{eq:suff_dec}. We finished the proof. 
\end{proof}

\end{document}